\documentclass[a4paper, 12pt, leqno]{article}

\usepackage{amsmath}
\usepackage{amssymb}
\usepackage{ascmac}
\usepackage{amsthm}
\usepackage{appendix}

\usepackage{bbm}
\usepackage{color}
\usepackage{enumitem}
\usepackage{mathrsfs}
\usepackage[pdftex]{graphicx}
\usepackage{authblk}
\usepackage{float}
\usepackage{wrapfig}
\usepackage{layout}
\usepackage{cite}
\usepackage{braket}
\usepackage
[draft=false,setpagesize=false,pdfstartview=FitH,
colorlinks=true,linkcolor=blue,citecolor=magenta,pagebackref=true,bookmarks=false]
{hyperref}
\usepackage{mathtools}
\usepackage{mleftright}
\numberwithin{equation}{section}
\usepackage{physics2}
\usephysicsmodule{ab}
\usephysicsmodule{diagmat}
\usephysicsmodule{xmat}
\usepackage{derivative}
\usepackage{autobreak}

\newcommand{\R}{\mathbf{R}}

\newcommand{\RN}{\mathbf{R}^N}

\newcommand{\N}{\mathbf{N}}
\newcommand{\e}{\varepsilon}

\newcommand{\cG}{\mathcal{G}}

\newcommand{\cD}{\mathcal{D}}

\newcommand{\scrG}{\mathscr{G}}

\def\ov#1{\overline{#1}}
\def\wh#1{\widehat{#1}}
\def\wt#1{\widetilde{#1}}

\newcommand{\rad}{\mathrm{rad}}

\DeclareMathOperator{\diver}{div}
\DeclareMathOperator{\supp}{supp}

\DeclareMathOperator{\dist}{dist}

\def\s{\sigma}

\usepackage{todonotes}
\usepackage[truedimen, margin=20truemm, bottom=20truemm, footskip=15truemm]{geometry}
\usepackage{layout}
\usepackage{lineno}
\usepackage{bm} 

\usepackage{aliascnt}

\usepackage[capitalise,noabbrev,nameinlink]{cleveref}

\AtBeginEnvironment{appendices}{\crefalias{section}{appendix}}

\newtheorem{theorem}{Theorem}[section]
\crefname{theorem}{Theorem}{Theorems}

\newaliascnt{lemma}{theorem}
\newtheorem{lemma}[lemma]{Lemma}
\aliascntresetthe{lemma}
\crefname{lemma}{Lemma}{Lemmas}

\newaliascnt{proposition}{theorem}
\newtheorem{proposition}[proposition]{Proposition}
\aliascntresetthe{proposition}
\crefname{proposition}{Proposition}{Propositions}

\newaliascnt{corollary}{theorem}

\aliascntresetthe{corollary}
\crefname{corollary}{Corollary}{Corollaries}

\theoremstyle{definition}
\newaliascnt{definition}{theorem}

\aliascntresetthe{definition}
\crefname{definition}{Definition}{Definitions}

\theoremstyle{remark}
\newaliascnt{remark}{theorem}
\newtheorem{remark}[remark]{Remark}
\aliascntresetthe{remark}
\crefname{remark}{Remark}{Remarks}
 

\usepackage{authblk} 

\makeatletter
\renewenvironment{proof}[1][\proofname]{\par
	\pushQED{\qed}%
	\normalfont \topsep6\p@\@plus6\p@\relax
	\trivlist
	\item\relax
	{\bfseries
		#1\@addpunct{.}}\hspace\labelsep\ignorespaces
}{%
	\popQED\endtrivlist\@endpefalse
}
\makeatother

\providecommand{\keywords}[1]
{\small	\textit{Keywords and phrases:} $(2,q)$-Laplacian; $L^2$–normalized solution; mass critical exponents; existence results; Liouville theorems.}

\providecommand{\MSC}[1]
{\small	2020 \textit{Mathematics Subject Classification:} 35J20, 35J60, 35B38, 35A15}

\begin{document}

\title{Normalized Solutions for the $(2,q)$-Laplacian Operator Between Mass-Critical Exponents}

\author[1,2]{Laura Baldelli}

\author[3]{Norihisa Ikoma}

\affil[1]{\small IMAG, Departamento de An\`alisis Matem\`atico, Universidad de Granada, \par
\centering Campus Fuentenueva, 18071 Granada, Spain
}
\affil[2]{\small 
Institute for Analysis, Karlsruhe Institute of Technology (KIT)\par
\centering D-76128 Karlsruhe,
Germany\par 
 \texttt{ laura.baldelli@kit.edu }}
\affil[3]{\small Department of Mathematics, Faculty of Science and Technology, Keio University\par
\centering
Yagami Campus, 3-14-1 Hiyoshi, Kohoku-ku, Yokohama, Kanagawa 223-8522, Japan\par 
\texttt{ikoma@math.keio.ac.jp} }

\maketitle

\begin{abstract}
This paper concerns the existence of normalized solutions to a class of $(2,q)$-Laplacian equations 
with a power type nonlinearity in the intermediate regime between the two mass critical exponents $2(1+2/N)$, $q(1+2/N)$. 
More precisely, we prove the existence of solutions with negative energy obtained through a global minimization procedure, and of solutions with positive energy established via a local minimization technique and a mountain-pass argument. Furthermore, we derive both existence and nonexistence results for the zero-mass case $\lambda = 0$, highlighting the role of the mixed diffusion in determining the qualitative behavior of solutions. Specifically, this paper's novelty lies in providing a comprehensive understanding of the intermediate cases that arise when the non-homogeneous $(2,q)$-Laplacian operator appears. Our analysis combines variational methods, compactness arguments, and delicate energy estimates adapted to the nonhomogeneous nature of the $(2,q)$-Laplacian operator.

\end{abstract}

\keywords{}

\MSC{}

\section{Introduction}

The present paper concerns the existence of solutions $(\lambda,u)$ to the following $(2,q)$-Laplacian equation
\begin{equation}\label{eq_main}
	-\Delta u - \Delta_q u + \lambda u = \alpha \vab{u}^{p-2} u \quad \text{in} \ \RN, 
\end{equation}
under the constraint
\begin{equation}\label{eq_mass}
  \Vab{u}_2^2 \coloneq \int_{{\mathbb{R}^N}} {{|u|}^2} \odif{x}=m,
\end{equation}
where $N \geq 1$, $q>2$, $m>0$ $\alpha\ge 0$, $\lambda\in\R$, $\Delta_q u=\diver ({| \nabla u |^{q - 2}}\nabla u )$ is the $q$-Laplacian of $u$,  and the exponent $p$ of the nonlinearity is between the two mass critical exponents, namely
\begin{equation}\label{middle}
p_2 \coloneq 2 + \frac{4}{N} < p < q \ab( 1 + \frac{2}{N} ) \eqcolon p_q.
\end{equation}
Observe that $p_q<q^* \coloneq Nq / (N-q)_+$ since $2<q$. 
In particular, we are seeking normalized solutions to \eqref{eq_main}, since \eqref{eq_mass} imposes a normalization on its $L^2$-mass, which can be obtained by searching critical points of the following functional 
\begin{equation}\label{defE}
	E_\alpha(u) \coloneq \frac{1}{2} \Vab{\nabla u}_2^2 + \frac{1}{q} \Vab{\nabla u}^q_q - \frac{\alpha}{p} \Vab{u}_p^p
\end{equation}
restricted to
\begin{equation}\label{defSm}
S_m \coloneq \Set{u \in X | \Vab{u}_2^2 = m},
 \end{equation}
where 
\[
X\coloneq \Set{ u \in H^1(\RN) | \,\,|\nabla u| \in L^q(\RN) }, \quad 
X_\rad \coloneq \Set{u \in X | u(x) =u(\vab{x})};
\]
see \cref{s:pre} for details. Note that $\lambda$ appears as Lagrange multipliers and that $\lambda$ is part of the unknown.


The more general $(p,q)$-Laplacian operator $\Delta_p + \Delta_q$ 
appears in the models in nonlinear elasticity (\cite{Zh86}) and solitary waves for elementary particles (\cite{gDeK,BDAFP00}). 
We also refer to \cite{BFMP99,BFP98} for related problems. 
Moreover, the $(2,4)$-Laplacian operator and its extensions appear as an approximation of the Born–Infeld operator, defined by
\[
\mathcal Q(u) := -{\rm div}\left(\frac{\nabla u}{\sqrt{1 - |\nabla u|^2}}\right).
\]
The electromagnetic theory introduced by Born and Infeld (see \cite{Bnat, B, BInat, BI}) provides a nonlinear alternative to classical Maxwell theory. 
Its importance lies in offering a unified framework for electrodynamics and, notably, in providing a satisfactory resolution to the well-known \emph{infinite-energy problem}: 
indeed, in the Born–Infeld model, the electromagnetic field generated by a point charge possesses finite energy.
By performing the Taylor expansion of $1/\sqrt{1 - |s|}$ up to order $k$, one obtains the approximate operator
\[
\mathcal Q(u) \sim -\Delta u - \frac{1}{2}\Delta_4 u - \frac{3}{4 \cdot 2}\Delta_6 u - \dots - \frac{(2k-3)!!}{(2k-1)!!}\Delta_{2k} u;
\]
see \cite{BodAPo16,Ki12,PoWa18}.

\medskip

In this paper, motivated by the fact that physicists are often interested in normalized solutions, since the prescribed mass naturally arises in nonlinear optics and in the theory of Bose–Einstein condensates (see \cite{fra, ma} and the references therein), we look for solutions of \eqref{eq_main} in $X$ having a prescribed $L^2$-norm, as follows from \eqref{eq_mass}.

In the literature, the existence of normalized solutions $(\lambda, u)$ to the semilinear elliptic equation 
involving the Laplace operator has been extensively investigated in recent years. 
In the $L^2$-subcritical case, namely $p < p_2$, the functional restricted to the constraint is coercive, 
so a global minimizer can be obtained by minimizing on the sphere (see \cite{CaLi82,Lions84II, s82}). 
In contrast, this approach fails in the other cases: for instance, in the $L^2$-supercritical case ($p > p_2$), 
the functional restricted to the sphere is no longer bounded from below. 
In \cite{Je97}, Jeanjean treated the $L^2$-supercritical case by introducing a mountain-pass structure for an auxiliary functional. 
We also refer to \cite{TSdV,bm21,CiGaIkTaI,CiGaIkTaII,HiTa19,IkTa19,JeLu20,JeLu22,JeZhZh24,MeSc24,Sc22,Sh14,So20a,So20b} for generalizations 
of $L^2$-subcritical, $L^2$-critical and $L^2$-supercritical cases. 
For a counterpart of the $p$-Laplacian, we refer to \cite{LoZhZh24, LoZh24, ShWa24,WaLiZhLi20,ZhLeLe24, ZhZh22}.


Moving to the $(2,q)$-Laplacian operator, the first contribution is due to \cite{BaYa25}, 
where the authors studied the existence of normalized solutions of \eqref{eq_main} with $\alpha = 1$, 
for $p$ below and above both mass-critical exponents $p_2$ and $p_q$, as well as in the mass-critical cases. 
A generalization in terms of the operator was later given in \cite{BaMePo25}, together with an application to Born–Infeld theory, 
and in \cite{CaRa24}, where the $(p,q)$-Laplacian operator is considered together with a more general nonlinearity. 
Notice that the papers mentioned above do not treat the case \eqref{middle}.

To the best of the authors' knowledge, papers dealing with the case \eqref{middle} are \cite{DiJiPu25,HuLuWa25}. 
In \cite{DiJiPu25}, under $1<q<2 = N$ or $N \geq 3$ and $2<q<N$, 
instead of $\vab{u}^{p-2} u$, the authors considered a generalized nonlinearity $f=f(u)$ with $f_+(u) / u \to 0$ as $u \to 0$ and 
$f(u) / \vab{u}^{\ov{p}-1} \to 0$ as $\vab{u} \to \infty$, where $f_+(u) \coloneq \max \{0,f(u)\}$ and $ \overline{p} \coloneq  (1+2/N) \max\{2,q\} = \max\{p_2,p_q\}$. 
In particular, exploiting an idea introduced in \cite{bm21,MeSc24}, they obtained the existence of global minimizers 
through the following minimizing problem $\inf_{D(m)} E_\alpha$ where $D(m) \coloneq \set{ u \in X | \Vab{u}_2^2 \leq m}$. 
It is worth noting that when $N=2$, by $1<q<2$, $p_q<p_2=\overline{p}$ holds and the situation is different from our case. 
On the other hand, in \cite{HuLuWa25}, an inhomogeneous Hardy-type nonlinearity is treated under $q > 2$ and $N \geq 1$. 
Nevertheless, their analysis focuses only on minimizers, and their proof of existence differs from ours. 
In fact, due to the weight $\vab{x}^{-b}$ of $\vab{u}^{p-2}u$, the equation is not invariant by translations and 
the lack of the compactness does not occur in \cite{HuLuWa25}. 
We also refer to \cite{ZhZhLi25}, where the authors study \eqref{eq_main} with $\alpha = 1$, fixing the $L^p$-norm instead of \eqref{eq_mass}, 
and obtain a result analogous to that of \cite{HuLuWa25} in the case \eqref{middle}.

Inspired by \cite{BaYa25}, where the subcritical and supercritical cases with respect to both mass-critical exponents $p_2$ and $p_q$ are analyzed, 
the aim of this paper is to investigate the intermediate case, namely \eqref{middle}. 
In particular, we establish the existence of solutions with negative energy, obtained through a global minimization process, and solutions with positive energy, 
derived via a local minimization technique and a mountain-pass argument. 
Moreover, we provide existence and nonexistence results for the zero-mass case, i.e., $\lambda=0$. 
As stated in the above, as far as we know, 
equation \eqref{eq_main} with a power-type nonlinearity between the two mass-critical exponents has not previously been studied in all its facets. 
The objective of this paper is to address this gap. We also note that in the Appendix, we prove the well-known Pohozaev identity and regularity as a necessary preliminary step. 
In particular, we prove the $C^{2,\gamma}_{loc}$ regularity for every weak solution to our problem, a result which is often taken for granted or not exhaustively proven in other papers.

In what follows, we present the main results along the directions outlined above.

\subsubsection*{Main Results}


We first consider the global minimization problem
\[
\displaystyle e_\alpha (m) \coloneq \inf_{u \in S_m} E_\alpha(u).
\]

\begin{theorem}\label{theomin}
	Let $N \geq 1 $, $q > 2$ and $p_2 < p < p_q $. Then there exists $\alpha_0(m) > 0$ such that  
	\begin{enumerate}[label={\rm (\roman*)}]
		\item 
		when $\alpha > \alpha_0(m)$, $e_\alpha(m) < 0 $ holds and any minimizing sequence for $e_\alpha(m)$ is relatively compact in $X$ up to translations; 
		in particular, $e_\alpha(m)$ is achieved by some $u\in S_m$;
		
		\item 
		when $\alpha = \alpha_0(m)$, $e_{\alpha_0(m)} (m) = 0$ and $e_{\alpha_0(m)} (m)$ is achieved by some $u \in S_m$; 
		
		\item 
		when $0< \alpha < \alpha_0(m)$, $e_\alpha(m) = 0$ and $e_{\alpha} (m)$ is never attained. 
	\end{enumerate}
	Furthermore, any minimizer $u$ corresponding to $e_\alpha(m)$ is a 
	solution of \eqref{eq_main} with a strictly positive Lagrange multiplier $\lambda$, 
	and either $u>0$ in $\R^N$ or $u<0$ in $\R^N$. 
	Finally, there exists a radially symmetric minimizer corresponding to $e_\alpha(m)$.  
\end{theorem}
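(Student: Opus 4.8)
The plan is to produce the radial minimizer from an already existing one by Schwarz symmetrization. Since parts (i) and (ii) guarantee that $e_\alpha(m)$ is attained whenever $\alpha \ge \alpha_0(m)$, I would fix a minimizer $u \in S_m$ with $E_\alpha(u) = e_\alpha(m)$ and pass to its symmetric decreasing rearrangement, checking that the rearranged function is an admissible radial competitor of no larger energy.

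First I would reduce to the nonnegative case. Replacing $u$ by $\vab{u}$ leaves the constraint and the nonlinear term unchanged, since $\Vab{\vab{u}}_2 = \Vab{u}_2$ and $\Vab{\vab{u}}_p = \Vab{u}_p$, while $\vab{\nabla\vab{u}} = \vab{\nabla u}$ a.e.\ shows $E_\alpha(\vab{u}) = E_\alpha(u)$; hence $\vab{u} \in S_m$ is again a minimizer and I may assume $u \ge 0$. Next I would introduce the symmetric decreasing rearrangement $u^*$ of $u$, which is well defined because $u \in L^2(\RN)$ forces all super-level sets $\Set{u > t}$ to have finite measure. By equimeasurability the rearrangement preserves every $L^s$-norm, so $\Vab{u^*}_2^2 = \Vab{u}_2^2 = m$ and $\Vab{u^*}_p^p = \Vab{u}_p^p$; in particular $u^* \in S_m$ and the nonlinear term is untouched.

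The decisive ingredient is the Pólya–Szegő inequality, which I would apply simultaneously at the two exponents $r=2$ and $r=q$: since $\vab{\nabla u} \in L^2(\RN) \cap L^q(\RN)$, one has $\Vab{\nabla u^*}_r \le \Vab{\nabla u}_r$ for both $r=2$ and $r=q$, whence $u^* \in H^1(\RN)$ with $\vab{\nabla u^*} \in L^q(\RN)$, i.e.\ $u^* \in X_\rad$. Combining these facts gives
\[
E_\alpha(u^*) = \frac{1}{2}\Vab{\nabla u^*}_2^2 + \frac{1}{q}\Vab{\nabla u^*}_q^q - \frac{\alpha}{p}\Vab{u^*}_p^p \le \frac{1}{2}\Vab{\nabla u}_2^2 + \frac{1}{q}\Vab{\nabla u}_q^q - \frac{\alpha}{p}\Vab{u}_p^p = e_\alpha(m).
\]
As $u^* \in S_m$ already forces $E_\alpha(u^*) \ge e_\alpha(m)$, equality holds throughout and $u^*$ is a radially symmetric minimizer for $e_\alpha(m)$.

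I expect the only genuine technical subtlety to be the simultaneous use of the Pólya–Szegő inequality in the non-homogeneous $(2,q)$-setting — that is, verifying that one and the same rearrangement $u^*$ lowers (or preserves) both gradient norms at once and therefore lands back in $X$. The reduction to $u \ge 0$, the equimeasurability of the mass and nonlinear terms, and the concluding chain of inequalities are all routine.
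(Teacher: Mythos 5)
There is a genuine gap: your proposal proves only the final sentence of the theorem (existence of a radially symmetric minimizer), and does so by explicitly assuming parts (i) and (ii) as given (``Since parts (i) and (ii) guarantee that $e_\alpha(m)$ is attained\ldots''). But parts (i)--(iii), together with the sign and Lagrange-multiplier assertions, are the actual content of the statement, and none of them can be obtained by symmetrization. What is missing is: the construction of $\alpha_0(m)$ itself, which in the paper comes from the scaling analysis of $\psi_{\alpha,u}(\theta)=\theta^{-2}E_\alpha(u_\theta)$ and the auxiliary minimization problem $d(m)=\inf_{S_m} J$ (\cref{prop:min}), including the proof that $d(1)>0$ and that $d(1)$ is attained by a radial function (which needs compact radial embeddings for $N\geq 2$ and a separate monotonicity argument for $N=1$); the relative compactness of arbitrary minimizing sequences in (i), which is a concentration--compactness argument ruling out vanishing via Lions' lemma and dichotomy via the strict subadditivity of $m\mapsto e_\alpha(m)$ (\cref{propem}); the non-attainment in (iii); the constant sign of minimizers, which requires the $C^{2}$ regularity of \cref{l:reg} and the strong maximum principle; and the strict positivity of the Lagrange multiplier, which follows from the Pohozaev identity of \cref{lem_Poho}. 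A proof of the theorem must supply all of this; your argument presupposes it.

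For the one claim you do address, the argument is correct and coincides with the paper's: the paper likewise replaces a minimizer $u$ by $\vab{u}$ and then by its symmetric decreasing rearrangement, invoking the rearrangement inequalities of Talenti (\cite{Ta76}) at both exponents $r=2$ and $r=q$ simultaneously, with equimeasurability preserving the $L^2$-constraint and the $L^p$-term; the subtlety you flag (that one and the same rearrangement controls both gradient norms, so $u^*\in X_\rad$) is exactly the point, and it is handled the same way in the paper. So that fragment is sound, but as a proof of the stated theorem the proposal is incomplete.
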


\begin{remark}
	(i) 
	As in \cite{DiJiPu25}, instead of $\alpha$, we may use $m$ as a parameter and obtain a similar result to \cref{theomin} depending on the size of $m$. 
	See \cref{Rem:alp0m} in this perspective. 
   
  (ii) By \cref{l:reg}, each minimizing $u$ corresponding to $e_\alpha(m)$ is of class $C^2$. 
  Thus, with the aid of \cite[Theorem 2]{Ma09}, 
  when $N \geq 2$, as in \cite[the proof of Theorem 1.4]{JeLu22CVPDE}, we may prove that $u$ is radially symmetric up to translations and monotone. 
  Similarly, when $N=1$, if $\vab{u(x_0)} = \max_{\R} \vab{u}$, then $u'(x_0) = 0$ and $u$ is a solution of 
  \begin{equation}\label{eq:ODE}
  - \ab( 1 + (q-1) \vab{u'}^{q-2} ) u'' + \lambda u = \alpha \vab{u}^{p-2} u  \quad \text{in} \ \R.
  \end{equation}
  Since $v(x) \coloneq u(x)$ if $x \geq x_0$ and $v(x) \coloneq u(2x_0 - x )$ if $x < x_0$ is also a solution of \eqref{eq:ODE}, 
  when $q \geq 3$, the unique solvability applies at $x=x_0$ to deduce $u(x) = u(2x_0-x)$ for all $x \geq x_0$ and $u$ is symmetric. 
  From 
  \[
  0 = \frac{1}{2} \ab(u'(x))^2 + \frac{q-1}{q} \vab{u'(x)}^q + \frac{\alpha}{p} \vab{u(x)}^p - \frac{\lambda}{2} u^2(x) \quad \text{for any $x \in \R$}, \quad 
  \frac{\lambda}{2} u(x_0)^2 = \frac{\alpha}{p} \vab{u(x_0)}^p,
  \]
  it is easily seen that $u''(x_0) \neq 0$ and $\vab{u'(x)} > 0$ for every $x \in (x_0,\infty)$, which implies that 
  $u$ is monotone when $N=1$ and $q \geq 3$. 
\end{remark}

We first state differences between \cite{DiJiPu25} and \cref{theomin}. 
As explained above, when $N=2$, the authors in \cite{DiJiPu25} studied the case which is different from \eqref{middle}. 
On the other hand, \cref{theomin} deals with the case $q \in (2,\infty)$ for all $N \geq 1$. 
Another difference is that in \cite{DiJiPu25}, the auxiliary minimizing problem is used, 
while we directly investigate the minimizing problem $e_\alpha(m) = \inf_{S_m} E_\alpha$. 
Furthermore, the existence of minimizers is obtained for $\alpha = \alpha_0(m)$, 
which is not discussed in \cite{DiJiPu25}.

The strategy for proving \cref{theomin} is quite standard and relies on analyzing the sign of $e_\alpha(m)$ according to the size of $\alpha$. 
In particular, we will find an $\alpha_0(m) > 0$ such that $e_\alpha(m)$ is negative for $\alpha > \alpha_0(m)$, while $e_\alpha(m) = 0$ for $\alpha \le \alpha_0(m)$, 
with the value $e_{\alpha_0(m)}(m)$ being attained, unlike the case $\alpha < \alpha_0(m)$. 
To achieve this goal, an auxiliary minimizing problem (see \eqref{mp}) is introduced. 
In particular, in \cref{prop:min}, we obtain the existence of minimizers to \eqref{mp} including the case $N=1$, 
which yields \cref{theomin} (ii). 
Then, by exploiting some properties of $e_\alpha(m)$, such as subadditivity on $m$, 
and starting from any minimizing sequence, we are able to rule out vanishing and dichotomy, thus obtaining compactness up to translations. 
Moreover, by using the results in \cref{App}, 
we deduce the sign properties of any minimizer and the associated Lagrange multiplier.

Next, we move to the existence of another type of critical points, that is, local minimizers and mountain pass type critical points, 
which were not studied in \cite{DiJiPu25,HuLuWa25} under \eqref{middle}. 
Inspired by \cite[Theorem 1.2]{JeLu22}, we first study a local minimization problem whose result can be summarized as follows.

	\begin{theorem}\label{theorem:local}
		Let $N \geq 1$, $q > 2$, $p_2 < p < p_q $ and $\alpha_0=\alpha_0(m)>0$ be the number in \cref{theomin}. 
		Then there exists $\rho_0 = \rho_0(\alpha_0, m) > 0$ such that for any $\alpha \in (0, \alpha_0]$, the following statements hold 
		for a local minimizing problem: 
		\begin{equation*}
			\ov{e}_\alpha(m) \coloneq \inf_{u \in S^{\rho_0}_m} E_\alpha (u) \geq e_\alpha(m) = 0,
		\end{equation*}
		where 
		\begin{equation}\label{def-Smrho}
			S^{\rho_0}_m := \Set{ u \in S_m | K(u) \coloneq \Vab{\nabla u}_2^2 + \Vab{\nabla u}_q^q > \frac{ \rho_0(\alpha_0, m) }{2} };
		\end{equation}
		\begin{enumerate}[label={\rm (\roman*)}]
			\item 
			there exists $\alpha_0' \in (0,\alpha_0)$ such that when $\alpha \in (\alpha_0', \alpha_0]$, 
			$\ov{e}_\alpha(m)$ is achieved by some $v \in S^{\rho_0}_m$ with 
			\begin{equation*}
				E_\alpha (v)=
				\begin{dcases}
					\overline{e}_\alpha(m) > 0 &\text{for} ~ \alpha \in (\alpha_0',\alpha_0),\\
					\overline{e}_\alpha(m) = 0 = e_\alpha(m) & \text{for} ~ \alpha = \alpha_0;
				\end{dcases}
			\end{equation*}
			
			\item 
			the local minimizer $v \in S^{\rho_0}_m$ can be chosen so that 
			$v$ has a radial symmetry and a constant sign on $\R^N$, is a solution of \eqref{eq_main} with the associated Lagrange multiplier being positive, 
			and is an energy ground state solution, that is, 
			\[
			\ov{e}_\alpha(m) = E_\alpha(v) = \inf \Set{ E_\alpha(u) | \text{$u \in S_m$ is a critical point of $E_\alpha |_{S_m}$} };
			\]
			
			\item 
			the map $(\alpha_0',\alpha_0] \ni \alpha \mapsto \ov{e}_\alpha(m)$ is strictly decreasing.
		\end{enumerate}
\end{theorem}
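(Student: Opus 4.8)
The plan is to read the geometry of $E_\alpha$ on $S_m$ off the mass‑preserving fibration $u \mapsto u_t$, $u_t(x) \coloneq t^{N/2} u(tx)$, along which $\Vab{u_t}_2 = \Vab{u}_2$ and
\[
E_\alpha(u_t) = \frac{t^2}{2}\Vab{\nabla u}_2^2 + \frac{t^{\gamma_q}}{q}\Vab{\nabla u}_q^q - \frac{\alpha\, t^{\gamma_p}}{p}\Vab{u}_p^p, \qquad \gamma_p \coloneq \frac{N(p-2)}{2}, \quad \gamma_q \coloneq \frac{N(q-2)}{2}+q.
\]
Condition \eqref{middle} is exactly $2 < \gamma_p < \gamma_q$, so each fiber tends to $0^+$ as $t \to 0$, to $+\infty$ as $t \to +\infty$, and — once the focusing term is strong enough — has a strict local maximum at some $t_1(u)$ followed by a strict local minimum at some $t_2(u) > t_1(u)$; this is the ``well behind a barrier'' that the constraint $K(u) > \rho_0/2$ in \eqref{def-Smrho} is meant to isolate. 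First I would fix $\rho_0 = \rho_0(\alpha_0,m) > 0$ small enough that, using $E_\alpha \geq 0$ on $S_m$ for $\alpha \in (0,\alpha_0]$ (which is \cref{theomin}) together with the Gagliardo–Nirenberg inequality valid on $X$ since $p < p_q < q^*$, the energy on the annulus $\{\rho_0/2 \leq K \leq \rho_0\}$ is bounded below by a positive constant $\delta_0$; this annulus then acts as a genuine barrier separating the admissible set $S^{\rho_0}_m = \{K > \rho_0/2\}$ from the vanishing region $\{K < \rho_0/2\}$, while $\ov e_\alpha(m) \geq e_\alpha(m) = 0$ is immediate from $S^{\rho_0}_m \subset S_m$.

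For part (i) I would take a minimizing sequence $(u_n) \subset S^{\rho_0}_m$ for $\ov e_\alpha(m)$. It is bounded in $X$ by the coercivity of $E_\alpha$ on $S_m$, which is forced by $p < p_q$ (equivalently $\gamma_p < \gamma_q$): the $q$‑gradient term dominates the nonlinearity as $\Vab{\nabla u}_q \to \infty$. The threshold $\alpha_0'$ is defined as the infimum of the $\alpha$ for which $\ov e_\alpha(m) < \delta_0$; since at $\alpha_0$ the global minimizer $u^*$ of \cref{theomin}(ii) satisfies $E_{\alpha_0}(u^*) = 0$ and (shrinking $\rho_0$) $K(u^*) > \rho_0$, continuity of $\alpha \mapsto E_\alpha(u^*)$ yields $\alpha_0' < \alpha_0$. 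For $\alpha \in (\alpha_0',\alpha_0]$ the bound $\ov e_\alpha(m) < \delta_0$ forces $K(u_n) > \rho_0$ eventually, so the sequence stays uniformly inside $S^{\rho_0}_m$, away from the boundary slice $\{K = \rho_0/2\}$. I would then run the same concentration–compactness analysis as in \cref{theomin}: vanishing is excluded because it forces $\Vab{u_n}_p \to 0$ (Lions' lemma), whence $E_\alpha(u_n) \to \liminf(\tfrac12\Vab{\nabla u_n}_2^2 + \tfrac1q\Vab{\nabla u_n}_q^q) \geq \min\{\tfrac12,\tfrac1q\}\rho_0 > \ov e_\alpha(m)$, while dichotomy is excluded by the strict subadditivity of $m \mapsto \ov e_\alpha(m)$. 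This produces, up to translation, a strong limit $v \in S^{\rho_0}_m$ with $K(v) \geq \rho_0$, an interior point realizing $\ov e_\alpha(m)$. The signs follow from \cref{theomin}: $\ov e_{\alpha_0}(m) = 0$ is realized by $u^*$, whereas for $\alpha \in (\alpha_0',\alpha_0)$ one must have $\ov e_\alpha(m) > 0$, since otherwise $v$ would minimize $e_\alpha(m) = 0$, contradicting \cref{theomin}(iii).

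For part (ii), being an interior point of $S^{\rho_0}_m$, $v$ is a bona fide constrained critical point of $E_\alpha|_{S_m}$, hence a weak solution of \eqref{eq_main} for some $\lambda \in \R$, and \cref{l:reg} makes it $C^2$. Since $\lvert \nabla \lvert v\rvert \rvert = \lvert \nabla v\rvert$ a.e., replacing $v$ by $\lvert v\rvert$ changes neither $E_\alpha$ nor $K$, so I may assume $v \geq 0$, and the strong maximum principle gives $v > 0$; radial symmetry up to translation then follows from the moving–plane result \cite{Ma09}, as in the remark after \cref{theomin}. To get $\lambda > 0$ I would combine the Nehari identity $K(v) + \lambda m = \alpha \Vab{v}_p^p$ (testing \eqref{eq_main} with $v$) with the Pohozaev identity from \cref{App}; eliminating $\Vab{v}_p^p$ writes $\lambda m$ as a positive combination of $\Vab{\nabla v}_2^2$ and $\Vab{\nabla v}_q^q$, the positivity being precisely $\gamma_p, \gamma_q > 2$, exactly as in \cref{theomin}. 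For the energy–ground–state property I would use the fibration once more: every constrained critical point $w$ is a critical point of its own fiber $t \mapsto E_\alpha(w_t)$, and as $K(w) \to 0$ the focusing term becomes negligible, so no interior fiber‑critical point survives; hence each solution sits either at a fiber‑minimum (and then lies in $S^{\rho_0}_m$, so has energy $\geq \ov e_\alpha(m)$) or at a fiber‑maximum of mountain–pass type (and then has strictly larger energy). Thus $v$ realizes the least critical energy.

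Part (iii) is immediate from $\partial_\alpha E_\alpha(u) = -\tfrac1p \Vab{u}_p^p < 0$ whenever $\Vab{u}_p \neq 0$: for $\alpha_0' < \alpha_1 < \alpha_2 \leq \alpha_0$ the minimizer $v_{\alpha_1} \in S^{\rho_0}_m$ has $\Vab{v_{\alpha_1}}_p > 0$ (it solves \eqref{eq_main} nontrivially), so $\ov e_{\alpha_2}(m) \leq E_{\alpha_2}(v_{\alpha_1}) < E_{\alpha_1}(v_{\alpha_1}) = \ov e_{\alpha_1}(m)$. The main obstacle is the compactness of the \emph{local} minimizing sequence: one must, simultaneously and uniformly in $\alpha \in (\alpha_0',\alpha_0]$, keep it off the boundary slice $\{K = \rho_0/2\}$ — so the limit is an interior, and therefore Lagrange‑critical, point — and rule out vanishing and dichotomy on the translation‑invariant domain $\RN$. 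This is where the nonhomogeneity of the $(2,q)$‑operator is felt most, since both the barrier estimate (the positive lower bound $\delta_0$ on the annulus, delicate precisely when $p$ is large relative to the $H^1$‑critical exponent) and the coercivity rest on playing the two Gagliardo–Nirenberg exponents $\gamma_p$ and $\gamma_q$ against each other, and the whole scheme closes only because of the strict ordering $2 < \gamma_p < \gamma_q$ supplied by \eqref{middle}.
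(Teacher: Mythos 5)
Your overall geometry (barrier via \cref{lemma:I}(iii), definition of $\alpha_0'$ from the zero-energy global minimizer at $\alpha_0$, exclusion of vanishing, monotonicity in $\alpha$ for part (iii)) matches the paper, but there are two genuine gaps. The main one is in the compactness step. You exclude dichotomy "by the strict subadditivity of $m \mapsto \ov{e}_\alpha(m)$", but no such strict subadditivity (or even strict monotonicity) of the \emph{local} minimization value is available a priori: the paper only proves that $m' \mapsto \ov{e}_\alpha(m')$ is continuous and nonincreasing (\cref{lemma:continuity}), and strictness is obtained only \emph{a posteriori}, from the positivity of the Lagrange multiplier of the limit (\cref{l:pos-Lag}), which in turn requires knowing that the weak limit is a critical point and that it stays in the region $K > \rho_0$ (\cref{l:no-critical}). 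Moreover, "running the same concentration--compactness analysis as in \cref{theomin}" hides the central difficulty the paper has to address: for a mere minimizing sequence one only has weak convergence of $\nabla u_n$ in $L^q$, so the splitting $E_\alpha(u_n) = E_\alpha(u_\infty) + E_\alpha(u_n - u_\infty) + o_n(1)$ for the term $\Vab{\nabla u_n}_q^q$ is not justified (Brezis--Lieb needs a.e.\ convergence of gradients), and the cutoff splitting of \cref{theomin} yields only one-sided inequalities whose conclusion then hinges on exactly the missing strict subadditivity. The paper closes this circle by first upgrading the minimizing sequence to a Palais--Smale sequence via Ekeland's principle and then invoking Boccardo--Murat to get a.e.\ gradient convergence, so that the limit is a genuine solution of \eqref{eq_main}; criticality then gives $K(v_\infty)>\rho_0$ by \cref{l:no-critical}, the Brezis--Lieb splitting becomes legitimate, and \cref{l:pos-Lag} rules out mass loss. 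Your proposal omits all of this, and without it the argument does not close.

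The second gap is your proof of $\lambda>0$ in part (ii). Eliminating $\Vab{v}_p^p$ between the Nehari identity and the Pohozaev identity gives
\[
\lambda m \, \frac{N(p-2)}{2p} \;=\; \frac{2N-p(N-2)}{2p}\,\Vab{\nabla v}_2^2 \;+\; \frac{Nq - p(N-q)}{pq}\,\Vab{\nabla v}_q^q,
\]
and the coefficient of $\Vab{\nabla v}_2^2$ is \emph{negative} whenever $p>2^*$, a nonempty part of the regime \eqref{middle} (e.g.\ $N=3$, $q=4$, $p\in(6,20/3)$). So "a positive combination, exactly as in \cref{theomin}" is false precisely in the mixed regime; this is the point the paper flags as the needed modification of \cite{JeLu22}, and it is why \cref{l:pos-Lag} argues in two steps ($\lambda \geq 0$ from the mass-monotonicity of $\ov{e}_\alpha(\cdot)$, then $\lambda \neq 0$ from Pohozaev combined with the quantitative bound \eqref{bdd-ebar} and \cref{l:no-critical}). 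Finally, a smaller issue: your ground-state argument via fiber maxima/minima is unjustified as stated, but it is easily repaired, since \cref{l:no-critical} shows every critical point of $E_\alpha|_{S_m}$ lies in $S^{\rho_0}_m$ and hence has energy at least $\ov{e}_\alpha(m)$.
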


\begin{remark}
	Let us point out how the assumptions on the nonlinearity considered in \cite{JeLu22} should be modified in our setting. 
	In fact, conditions $(f_1)$, $(f_3)$, $(f_4)$, and $(f_5)$ remain unchanged. 
	On the other hand, in $(f_2)$ the denominator of the first limit should involve $q^* - 1$ instead of $2^* - 1$, 
	while in the last one $p_q$ should appear in place of $p_2$. 
	These adjustments seem to be the natural modifications in our framework.
\end{remark}

Let us emphasize that \cref{theorem:local} is not a mere adaptation of the result in \cite{JeLu22} 
due to the presence of the $(2,q)$-Laplacian operator, instead of the standard Laplacian. 
One of differences occurs when we consider a splitting of minimizing sequences $(u_n)_{n \in \N}$ to $\overline{e}_\alpha(m)$ used in \cite{JeLu20,JeLu22}. 
To obtain the compactness of $(u_n)_{n \in \N}$, it is important to obtain the splitting of the form $u_n = u_\infty + w_n$ 
with $E_\alpha(u_n) = E_\alpha(u_\infty) + E_\alpha(w_n) + o_n(1)$ where $u_\infty$ is the weak limit of $(u_n)_{n \in \N}$. 
For any minimizing sequence $(u_n)_{n \in \N}$, since we only know the weak convergence of $(\nabla u_n)_{n \in \N}$ in $L^q(\R^N)$, 
it is not clear whether $E_\alpha(u_n) = E_\alpha (u_\infty) + E_\alpha(w_n) + o_n(1)$ holds. 
To overcome this difficulty, in a similar way to \cite{DiJiPu25,Ik14}, 
we prove that a Palais--Smale sequence can be obtained from any minimizing sequence with the same asymptotic behavior by Ekeland's principle. 
This together with \cite[Theorem 2.1]{BoMu92} and the Brezis--Lieb lemma \cite{BrLi83} enables us to get the desired splitting. 
The details can be found in the proof of \cref{p:ex-loc-min}. 
Another difference comes from \eqref{middle}. 
Indeed, in our case, if $u \in S^{\rho_0}_{m'}$ with $m' \in (0,m]$ satisfies $E_\alpha(u) = \overline{e}_\alpha(m)$, 
then it is not immediate to deduce from the Pohozaev identity that the associated Lagrange multiplier to $u$ is strictly positive due to \eqref{middle} 
(cf. \cite[Proof of Lemma 3.6]{JeLu22}). 
Though it is slight, modifications are necessary; see \cref{l:no-critical,l:pos-Lag} and the proof of \cref{p:ex-loc-min}.

We observe that in \cite{JeLu22} the mass $m$ was the varying parameter, whereas in the present setting this role is played by $\alpha$, and the mass may now take any positive value. Hence, the dependence on $m$ in \cite{JeLu22} is here replaced by a dependence on $\alpha$, while $m>0$ remains completely unrestricted. For this reason, one may expect that the roles of $\alpha$ and $m$ could be interchanged in order to reformulate the result, in analogy with \cref{theomin}. However, we have chosen this framework as it provides a clearer perspective on the problem.
Moreover, the existence of a local minimizer, its qualitative properties, and the positivity of the associated Lagrange multiplier are obtained only after a careful analysis, since (locally) uniform assertions in $m$ and $\alpha$ are necessary in the proofs.

Next, we study the existence of positive critical points of mountain pass type by following the approach in \cite{Je97,JeLu22}. 
In this case as well, the mixed regime \eqref{middle} alters the structure of the problem and, in particular, affects the behavior of the associated auxiliary functional. Therefore, a delicate analysis is required to overcome the additional difficulties arising in this different framework. Indeed, a Liouville type assumption appear in the statement of the following theorem.

\begin{theorem}\label{t:ex-mp}
	Let $N \geq 1$, $2 < q < \infty$, $p_2 < p < p_q$, $m>0$ and $\alpha \geq \alpha_0'$, where $\alpha_0'(m)$ is in \cref{theorem:local}. 
	In addition, suppose 
	\begin{enumerate}[label*={\rm (A)}, ref=A]
		\item \label{A}
		there exists no nontrival solution to 
		\[
		-\Delta u - \Delta_q u = \alpha \vab{u}^{p-2} u \quad \text{in} \ \RN, \quad \Vab{u}_2^2 \leq m, \quad u \in X_\rad.
		\]
	\end{enumerate}
	Then there exists $u_\infty\in X$ a solution to \eqref{eq_main} such that $u_\infty > 0$ in $\RN$ and 
	\[
	E_\alpha (u_\infty) 
	> 
	\begin{dcases}
		\ov{e}_\alpha (m) & \text{if $ \alpha_0' < \alpha \leq \alpha_0$}, \\
		0 > e_\alpha(m) & \text{if $\alpha > \alpha_0$}. 
	\end{dcases}
	\]
\end{theorem}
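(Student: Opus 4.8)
The plan is to construct $u_\infty$ as a mountain-pass critical point for $E_\alpha$ restricted to $S_m$, following the Jeanjean scheme adapted to the $(2,q)$-setting. First I would introduce the auxiliary functional on $S_m \times \R$ obtained by the fiber-scaling $u \mapsto t \star u(x) \coloneq \mathrm{e}^{Nt/2} u(\mathrm{e}^t x)$, namely $\widetilde{E}_\alpha(u,t) \coloneq E_\alpha(t \star u)$. Since $\Vab{\nabla (t\star u)}_2^2 = \mathrm{e}^{2t}\Vab{\nabla u}_2^2$, $\Vab{\nabla(t\star u)}_q^q = \mathrm{e}^{q(1+2/N)t - 2t \cdot \frac{2}{N}\cdot\ldots}$ scales with exponent determined by $p_q$, and $\Vab{t\star u}_p^p = \mathrm{e}^{N(p/2-1)t}\Vab{u}_p^p$, condition \eqref{middle} guarantees $p_2 < p < p_q$, so the nonlinear term dominates the $\Vab{\nabla u}_2^2$ term but is dominated by the $\Vab{\nabla u}_q^q$ term as $t \to +\infty$. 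The key structural fact I would establish is that $\widetilde{E}_\alpha(u,\cdot)$ has a mountain-pass geometry in $t$, with a local minimum region (provided by the local minimizer of \cref{theorem:local} when $\alpha \le \alpha_0$, or by the negativity region when $\alpha > \alpha_0$) separated from the region $E_\alpha < \ov{e}_\alpha(m)$ (resp.\ $< 0$) reached for large $t$.

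Next I would define the minimax value
\begin{equation*}
c_\alpha(m) \coloneq \inf_{\gamma \in \Gamma} \max_{s \in [0,1]} \widetilde{E}_\alpha(\gamma(s)),
\end{equation*}
over an appropriate class $\Gamma$ of paths in $S_m \times \R$ joining the two wells, and show $c_\alpha(m)$ strictly exceeds the relevant local-minimum level ($\ov{e}_\alpha(m)$ or $0$). The monotonicity trick of Jeanjean then produces a Palais--Smale sequence $(u_n,t_n)$ for $\widetilde{E}_\alpha$ at level $c_\alpha(m)$ that is \emph{almost} on the Pohozaev-type constraint $\partial_t \widetilde{E}_\alpha = 0$; replacing $u_n$ by $t_n \star u_n$ gives a PS sequence $(v_n)$ for $E_\alpha|_{S_m}$ at level $c_\alpha(m)$ with $G(v_n) \to 0$, where $G$ is the scaling (Pohozaev) functional. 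The boundedness of $(v_n)$ in $X$ follows from combining the energy bound with the asymptotic Pohozaev identity: the $\Vab{\nabla v_n}_2^2$ and $\Vab{\nabla v_n}_q^q$ norms are controlled because, under \eqref{middle}, the two scaling exponents straddle that of the nonlinearity, preventing the gradient norms from escaping to infinity. Here I would lean on the Ekeland-based construction of a PS sequence with prescribed asymptotics, exactly as flagged in the discussion preceding \cref{p:ex-loc-min}.

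The compactness step is where the real work lies, and it is where assumption \eqref{A} enters. Passing to a weak limit $v_n \rightharpoonup u_\infty$ in $X$, I would use the Brezis--Lieb lemma together with the splitting result from \cite{BoMu92} (as indicated in the paper) to decompose $E_\alpha(v_n) = E_\alpha(u_\infty) + E_\alpha(w_n) + o_n(1)$ with $w_n \rightharpoonup 0$. If $u_\infty \ne 0$, it solves \eqref{eq_main} for some Lagrange multiplier $\lambda$, and one argues $\lambda > 0$ and that the residual $w_n$ carries no mass, yielding strong convergence and $E_\alpha(u_\infty) = c_\alpha(m)$. The dangerous alternative is $u_\infty = 0$ with the mass and energy concentrating in a nontrivial bubble: rescaling such a bubble would produce, in the limit, a nontrivial radial solution of the \emph{zero-mass} equation $-\Delta u - \Delta_q u = \alpha\vab{u}^{p-2}u$ with $\Vab{u}_2^2 \le m$, which is precisely what \eqref{A} forbids. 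Thus \eqref{A} is exactly the device that excludes vanishing/bubbling and forces $u_\infty \ne 0$.

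I expect the main obstacle to be the \textbf{lack of strong convergence of the gradients $\nabla v_n$ in $L^q$}: because the $q$-term is genuinely nonlinear and only weak $L^q$-convergence is available a priori, the naive energy splitting $E_\alpha(v_n) = E_\alpha(u_\infty) + E_\alpha(w_n) + o_n(1)$ is not automatic — this is the same difficulty the authors highlight for the local-minimizer case. Overcoming it requires upgrading the PS sequence via Ekeland's principle and invoking the pointwise-gradient convergence result of \cite{BoMu92} so that the Brezis--Lieb splitting applies to the $q$-gradient term as well. A secondary subtlety is verifying the strict inequality $c_\alpha(m) > \ov{e}_\alpha(m)$ (resp.\ $> 0$) so that the mountain-pass level genuinely lies above the local minimum; this demands the delicate energy estimates adapted to the nonhomogeneous operator, and I would prove it by comparing the minimax value against the scaled local minimizer along an explicit optimal fiber, using that $\partial_{tt}\widetilde{E}_\alpha < 0$ at the critical fiber forces a strict energy gap. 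Once boundedness, the gradient-splitting, and assumption \eqref{A} are in place, positivity of $u_\infty$ follows from the maximum principle applied to $\vab{u_\infty}$ together with the regularity of \cref{l:reg}, and the claimed strict energy inequalities are read off from the minimax characterization.
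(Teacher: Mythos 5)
Your proposal follows the same overall route as the paper: Jeanjean's auxiliary functional $J(\sigma,u)=E_\alpha(e^{\sigma N/2}u(e^{\sigma}\cdot))$, a mountain-pass level sitting strictly above the (local) minimum level, a Palais--Smale sequence carrying the extra Pohozaev-type information $\partial_\sigma J\to 0$ obtained via Ekeland's principle, and compactness via Brezis--Lieb together with the Boccardo--Murat a.e.\ gradient convergence. However, there are two genuine gaps in the compactness step, both at exactly the point where \eqref{A} and the Lagrange multiplier enter. First, the role of \eqref{A} is misplaced. Non-vanishing of the weak limit is not what \eqref{A} is for: the paper runs the whole minimax in the radial sphere $S_{m,r}\subset X_\rad$ (your paths live in $S_m$, where compactness would additionally require translations), and there $u_\infty\equiv 0$ is excluded directly from $\partial_\sigma J(\sigma_n,u_n)\to 0$; indeed, if $\|u_n\|_p\to 0$, that relation forces $\|\nabla u_n\|_2+\|\nabla u_n\|_q\to 0$, hence $c_{mp}=0$, contradicting $c_{mp}\ge\delta_0>0$. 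What \eqref{A} actually rules out is the value $\lambda_\infty=0$ of the limiting multiplier: once $u_\infty\not\equiv 0$, the pair $(\lambda_\infty,u_\infty)$ solves \eqref{eq_main} with $u_\infty\in X_\rad$ and $\|u_\infty\|_2^2\le m$, so $\lambda_\infty=0$ would produce precisely the nontrivial radial zero-mass solution that \eqref{A} forbids. Your ``rescaled bubble'' scenario is not the mechanism, and no rescaling argument is made or needed.

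Second, your phrase ``one argues $\lambda>0$'' hides a step that is not available in this regime: under \eqref{middle} the multiplier of a mountain-pass limit need not be positive, and the paper must treat the case $\lambda_\infty<0$ separately (following \cite{MeSz25}), combining the Pohozaev identity $Q_{\alpha,\lambda_\infty}(u_\infty)=0$ with the limit of $\partial_\sigma J(\sigma_n,u_n)=o_n(1)$ to obtain strong convergence of both gradient norms, and then the equation itself to conclude $\|u_\infty\|_2^2=m$. Without this case analysis, strong convergence --- and hence $E_\alpha(u_\infty)=c_{mp}\ge \delta_0>\ov{e}_\alpha(m)$ (resp.\ $>0$) --- is not established. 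Two smaller points: positivity cannot be obtained by applying the maximum principle to $|u_\infty|$, since the absolute value of a mountain-pass critical point need not be a critical point; the paper instead replaces each path $\gamma_n(t)$ by $|\gamma_n(t)|$ (legitimate because $E_\alpha$ and $K$ are even), so that the Ekeland PS sequence is asymptotically nonnegative, $u_\infty\ge 0$, and only then invokes \cref{l:reg} and the strong maximum principle. Finally, boundedness of the PS sequence needs no Pohozaev argument here at all: since $p<p_q$, the functional $E_\alpha$ is coercive on $S_m$ by \cref{lemma:I}(ii).
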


We also give a sufficient condition to ensure \eqref{A}. 
To this end, let us consider 
\begin{equation}\label{eq_zero_mass}
	-\Delta u - \Delta_q u = \alpha \vab{u}^{p-2} u \quad \text{in} \ \RN.
\end{equation}

\begin{proposition}\label{nonex1}
	\begin{enumerate}[label={\rm (\roman*)}]
		\item 
		Let $1 \leq N\le 4$, $q > 2$, $p > 2$, $\alpha>0$ and $u\in X_\rad$ be a solution to \eqref{eq_zero_mass}. Then $u\equiv 0$. 
		
		\item 
		Assume either $N=1,2$, $2<p,q<\infty$ or $N \geq 3$, $q>2$ and $2<p \leq 2^*$. 
		Let $u \in X$ be a solution to \eqref{eq_zero_mass}. Then $u \equiv 0$. 
	\end{enumerate}
\end{proposition}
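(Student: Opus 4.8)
The plan is to base the entire argument on two integral identities satisfied by every solution $u\in X$ of \eqref{eq_zero_mass}. Testing the equation against $u$ gives the Nehari identity
\[
\Vab{\nabla u}_2^2+\Vab{\nabla u}_q^q=\alpha\Vab{u}_p^p ,
\]
while the Pohozaev identity proved in \cref{App} reads
\[
\frac{N-2}{2}\Vab{\nabla u}_2^2+\frac{N-q}{q}\Vab{\nabla u}_q^q=\frac{N\alpha}{p}\Vab{u}_p^p .
\]
Setting $A\coloneq\Vab{\nabla u}_2^2\ge0$ and $B\coloneq\Vab{\nabla u}_q^q\ge0$ and subtracting $\frac{N}{p}$ times the first from the second eliminates the nonlinear term and leaves the single relation
\[
aA+bB=0,\qquad a\coloneq\frac{N-2}{2}-\frac{N}{p},\quad b\coloneq\frac{N-q}{q}-\frac{N}{p}.
\]
Since $q>2$ we have $a-b=N\ab(\tfrac12-\tfrac1q)>0$, so $a>b$ in all cases. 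The decisive elementary remark is that $B=0$ forces $\nabla u\equiv0$, hence $u\equiv0$ (the only constant in $L^2(\RN)$ is $0$); the same conclusion follows from $A=0$.

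For part (ii) I would check that $a$ and $b$ are never of strictly opposite sign in the stated ranges, so that $aA+bB=0$ collapses. If $N\in\{1,2\}$ then $a<0$ (because $\frac{N-2}{2}\le0<\frac{N}{p}$) and $b<0$ (because $N-q<0$), whence $A=B=0$. If $N\ge3$ and $2<p\le2^*$ then $\frac{N}{p}\ge\frac{N}{2^*}=\frac{N-2}{2}$ gives $a\le0$; moreover $b<0$, since either $q\ge N$, so $\frac{N-q}{q}\le0$, or $q<N$, in which case $q>2$ yields $2^*<q^*$ and hence $p\le2^*<q^*$. With $a\le0$ and $b<0$ the nonnegative quantities $-aA$ and $-bB$ sum to zero, so both vanish; in particular $B=0$ and $u\equiv0$. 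Thus (ii) follows from the two identities alone.

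For part (i) the same dichotomy disposes of every case in which $a$ and $b$ share a sign: $aA+bB=0$ forces $u\equiv0$ whenever $a,b\ge0$ or $a,b\le0$ with one of them nonzero. Because $a>b$, the only surviving configuration is $a>0>b$, i.e.\ $2^*<p<q^*$ (with the convention $q^*=+\infty$ when $q\ge N$). This window is empty for $N=1,2$, so the genuine content of (i) beyond (ii) is precisely this supercritical window for $N\in\{3,4\}$, where the two identities are mutually consistent and merely pin down $A$ and $B$ as proportional. Here the radial hypothesis must be used: invoking the $C^{2,\gamma}_{\mathrm{loc}}$ regularity of \cref{l:reg} I would pass to the radial ODE and study the Hamiltonian
\[
G(r)\coloneq\frac12\vab{u'}^2+\frac{q-1}{q}\vab{u'}^q+\frac{\alpha}{p}\vab{u}^p ,
\]
which satisfies $G'(r)=-\frac{N-1}{r}\ab(\vab{u'}^2+\vab{u'}^q)\le0$ and $G(r)\to0$ as $r\to\infty$ for $u\in X_\rad$. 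Combining this monotonicity with the radial decay of $u$ and ODE uniqueness at the origin, I would rule out a nontrivial finite-energy radial profile.

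I expect this supercritical radial window to be the main obstacle, and it is structural. For the pure Laplacian the Pohozaev--Nehari relation already kills every $p\neq2^*$, and it is exactly the $q$-Laplacian term, carrying the opposite-signed coefficient $b$, that keeps $A$ and $B$ compatible and so rescues potential supercritical solutions. Consequently no purely variational scaling identity can close this case, and one must exploit genuinely one-dimensional radial information --- the sign and monotonicity of $G$, the control of $u(0)$ and $u'(0)$, and the precise algebraic decay forced by $u\in H^1(\RN)$ --- with the restriction $N\le4$ entering through the integrability and decay thresholds that make these estimates conclusive.
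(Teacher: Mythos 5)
Your part (ii) is correct and is essentially the paper's own proof: combining the Nehari identity with the Pohozaev identity of \cref{lem_Poho} and checking signs of the two coefficients forces $\Vab{\nabla u}_q=0$, hence $u\equiv 0$. No issues there.

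Part (i), however, contains a genuine gap exactly where you anticipate one. Your algebraic reduction correctly isolates the window $2^*<p<q^*$ (with the side remark that for $p\ge q^*$ you invoke Nehari/Pohozaev identities that the paper only establishes for $p<q^*$, where $\Vab{u}_p$ is guaranteed finite — a secondary but real issue), and you then propose to close this window for $N=3,4$ by combining the monotone radial Hamiltonian $G$, decay of $u$, and ODE uniqueness at the origin. This sketch cannot succeed as stated: by \cref{t:zeromass}, for every $N\ge 5$ and $p\in(2^*,q^*)$ the equation \eqref{eq_zero_mass} admits nontrivial solutions in $X_\rad$, and these solutions enjoy precisely the same structure you list ($G$ nonincreasing, $G(r)\to 0$, $C^{2,\gamma}_{\rm loc}$ regularity, uniqueness for the radial ODE). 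Hence any valid argument must inject the restriction $N\le 4$ quantitatively, and your proposal never identifies the mechanism through which it enters.

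The paper's proof supplies exactly this mechanism, and it is not a scaling/Hamiltonian argument but a comparison argument. First, one shows the zeros of $u$ in $(1,\infty)$ are finitely many (Hopf's lemma makes zeros isolated; testing \eqref{eq_zero_mass} with $u$ restricted to the annuli between consecutive zeros and applying Gagliardo--Nirenberg gives a uniform lower bound on $\Vab{\nabla u}_{L^2}$ over each annulus, incompatible with $\nabla u\in L^2(\RN)$ if there are infinitely many). Thus $u$ has a fixed sign, say $u>0$, and $u'<0$, on some $(R_1,\infty)$. Second, one compares $u$ with the explicit subsolutions $\varphi(r)=\e/\log r$ (for $N=1,2$) and $\varphi(r)=\e r^{-(N-2)}$ (for $N=3,4$) of the radial operator $-\ab(r^{N-1}(1+\vab{\cdot}^{q-2})\,\cdot\,)'$, concluding $u\ge\varphi$ on $(R_1,\infty)$. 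The restriction $N\le 4$ then enters through the divergence of
\[
\int_{R_1}^\infty \frac{r^{N-1}}{(\log r)^2}\odif{r}=\infty
\qquad\text{and}\qquad
\int_{R_1}^\infty r^{N-1}\,r^{-2(N-2)}\odif{r}=\int_{R_1}^\infty r^{3-N}\odif{r}=\infty \quad (N\le 4),
\]
which contradicts $u\in L^2(\RN)$; for $N\ge 5$ the last integral converges and the contradiction disappears, which is consistent with the existence result. Your write-up identifies the difficulty but does not produce this (or any equivalent) decay lower bound, so part (i) remains unproven in its essential case.
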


From \cref{nonex1}, \eqref{A} holds under either $1 \leq N \leq 4$ or $N \geq 5$ and $2<p \leq 2^*$. 
The remaining case is when $N \geq 5$ and $p>2^*$. 
Our final result states that in general \eqref{A} does not hold in this case. 
To state the result, define $\wh{X}$ by 
\[
\begin{aligned}
	&\wh{X}\coloneq \Set{ u \in \cD^{1,2} (\RN) | \vab{\nabla u} \in L^q(\RN) }, \quad 
	\wh{X}_\rad \coloneq \Set{ u \in \wh{X} | u(x) = u(\vab{x}) }, 
	\\
	&\Vab{u}_{\wh{X}} \coloneq \Vab{\nabla u}_2 + \Vab{\nabla u}_q;
\end{aligned}
\]
see \cref{MountZero} for details. 

\begin{theorem}\label{t:zeromass}
	Let $N \geq 3$, $q > 2$ and $p \in (2^*,q^*)$. 
	Then \eqref{eq_zero_mass} admits infinitely many solutions $(u_k)_{k \in \N} \subset \wh{X}_\rad$ with $\Vab{u_k}_{\wh{X}} \to \infty$. 
	Furthermore, when $N \geq 5$, $u_k \in X_\rad$ holds for each $k \geq 1$. 
	Consequently, when $N \geq 5$ and $p \in (2^*,p_q)$, \eqref{A} does not hold provided $\inf_{k \geq 1} \Vab{u_k}_2 < m$. 
\end{theorem}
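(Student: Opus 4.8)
\emph{Proof strategy.} The plan is to obtain the solutions as critical points of the energy $E_\alpha$ of \eqref{defE}, now regarded on the radial homogeneous space $\wh{X}_\rad$: since $p\in(2^*,q^*)$ yields the continuous embedding $\wh{X}\hookrightarrow L^p(\RN)$, $E_\alpha$ is even and of class $C^1$ on $\wh{X}$, $E_\alpha(0)=0$, and by the principle of symmetric criticality every critical point of $E_\alpha|_{\wh{X}_\rad}$ solves \eqref{eq_zero_mass}. As the nonlinearity is odd, I would produce an unbounded sequence of critical values by a $\Z_2$-symmetric minimax, the decisive ingredient being a compactness property created by the mixed operator on radial functions.

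\emph{The key lemma.} I would first prove that the embedding $\wh{X}_\rad\hookrightarrow L^p(\RN)$ is \emph{compact} for $2^*<p<q^*$. For radial $u$ the two gradient norms give the pointwise bounds $\vab{u(r)}\lesssim\Vab{\nabla u}_2\,r^{-(N-2)/2}$ as $r\to\infty$ and $\vab{u(r)}\lesssim\Vab{\nabla u}_q\,r^{-(N-q)/q}$ as $r\to0$ (the latter with the evident modification when $q\ge N$), obtained from $u(r)=-\int_r^\infty u'$, respectively integration from a fixed radius, together with H\"older's inequality. Inserting these into $\int\vab{u}^p$ shows that the exterior $\vab{x}>R$ carries arbitrarily small $L^p$-mass precisely because $p>2^*$, while the ball $\vab{x}<\delta$ carries arbitrarily small $L^p$-mass precisely because $p<q^*$; on the intermediate annulus Rellich's theorem applies. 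In other words the $L^2$-gradient suppresses spreading to infinity and the $L^q$-gradient suppresses concentration at the origin, and it is exactly the sandwiching $2^*<p<q^*$ that makes both happen simultaneously.

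\emph{The minimax and its sequences.} The geometry near $0$ holds in every case: with $\theta$ fixed by $1/p=\theta/2^*+(1-\theta)/q^*$, the sharp interpolation $\Vab{u}_p^p\lesssim\Vab{\nabla u}_2^{\theta p}\Vab{\nabla u}_q^{(1-\theta)p}$ has total weight $\theta p/2+(1-\theta)p/q=1+p/N>1$ relative to the quadratic and $q$-homogeneous terms, so $E_\alpha$ is positive on small spheres, with a barrier whose height grows with the codimension (here compactness enters again, through the vanishing of the $L^p$-constants of the tail subspaces). The genuine difficulty is the descent: since $1+p/N>1$, the $L^p$-term is \emph{not} dominated by the gradient terms, so $E_\alpha$ runs to $-\infty$ only along the dilations $u\mapsto u(\cdot/\lambda)$ (and, when $p>q$, also along amplitudes), and free Palais--Smale sequences need not be bounded. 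I would therefore run a $\Z_2$-symmetric genus minimax, with arbitrarily high-genus sets built from spread-out, mutually disjointly supported radial bumps, on the natural scaling (Nehari--Pohozaev) constraint, where the Pohozaev identity of the Appendix forces $E_\alpha=\tfrac{\alpha}{N}\Vab{u}_p^p$, so that $E_\alpha$ is coercive and positive there. This keeps the minimax sequences bounded, and the compact radial embedding together with the $(S_+)$-property of $-\Delta-\Delta_q$ upgrades weak to strong convergence; one so obtains critical values $c_k\to\infty$ and, via symmetric criticality, solutions $u_k\in\wh{X}_\rad$ with $\Vab{u_k}_{\wh{X}}\to\infty$.

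\emph{From $\wh{X}_\rad$ to $X_\rad$ and the consequence.} Finally, for $N\ge5$, the regularity of \cref{l:reg} makes each $u_k$ continuous, hence bounded near the origin; since the $q$-Laplacian and the power term decay fast, a comparison/Newtonian-potential argument yields $\vab{u_k(x)}\lesssim\vab{x}^{-(N-2)}$ for large $\vab{x}$, whence $\int_{\vab{x}>R}\vab{u_k}^2\,\rd x\lesssim\int_R^\infty r^{3-N}\,\rd r<\infty$ exactly when $N\ge5$. Thus $u_k\in L^2(\RN)$ and so $u_k\in X_\rad$. When moreover $p<p_q$ and $\inf_{k\ge1}\Vab{u_k}_2<m$, some $u_k$ is then a nontrivial radial solution of \eqref{eq_zero_mass} with mass below the threshold, contradicting \eqref{A}; hence \eqref{A} fails in this regime. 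The two delicate points are therefore the sandwiching of $p$ between $2^*$ and $q^*$ for the compact embedding, and the sharp $\vab{x}^{-(N-2)}$ decay needed to cross the threshold $N\ge5$ for $L^2$-integrability.
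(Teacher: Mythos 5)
Your key lemma --- compactness of the embedding $\wh{X}_\rad\hookrightarrow L^p(\RN)$ for $2^*<p<q^*$, proved by the two radial decay bounds coming from the $L^2$- and $L^q$-gradients --- is correct, and it is also what the paper uses (to verify the weak-continuity condition \eqref{Phi2} via \cite{Li82}). But the two steps you build on it contain genuine gaps. For the multiplicity, the paper does not run a constrained minimax: it applies the abstract monotonicity-trick theorem \cref{theoMountzero} (\cite[Theorem 1.7]{ByIkMaMa24}) to the perturbed family $E_{\lambda,\alpha}=\lambda\Psi-\Phi$, and the Pohozaev identity enters only through condition \eqref{IB}, i.e.\ for \emph{genuine} critical points, where it harmlessly gives $E_{\lambda,\alpha}(u)=\tfrac{\lambda}{N}K(u)$. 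Your replacement --- a $\Z_2$-genus minimax on the Nehari--Pohozaev set $\{P_{\alpha,0}=0\}$ --- is unjustified as stated: (i) you never remove the Lagrange multiplier, i.e.\ show that critical points of $E_\alpha$ restricted to that set solve \eqref{eq_zero_mass}; and (ii) for $q>N$ the fibering map
\[
t\mapsto P_{\alpha,0}\ab(u(\cdot/t))=\frac{N-2}{2}t^{N-2}\Vab{\nabla u}_2^2+\frac{N-q}{q}t^{N-q}\Vab{\nabla u}_q^q-\frac{N\alpha}{p}t^{N}\Vab{u}_p^p
\]
tends to $-\infty$ both as $t\to0^+$ and as $t\to\infty$ (the middle coefficient is negative with negative exponent), so dilation rays may miss the constraint or cross it twice; moreover on $\{P_{\alpha,0}=0\}$ one computes $P_{\alpha,0}'(u)u=(N-2)\ab(1-\tfrac p2)\Vab{\nabla u}_2^2+(N-q)\ab(1-\tfrac pq)\Vab{\nabla u}_q^q$, which is not sign-definite when $q>N$, so the set need not even be a $C^1$ manifold. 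Since the theorem covers all $q>2$, your scheme does not reach the stated generality.

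The second gap is more decisive: the decay estimate behind $u_k\in X_\rad$ for $N\ge5$. You assert $\vab{u(x)}\lesssim\vab{x}^{-(N-2)}$ by a one-shot ``comparison/Newtonian-potential argument'' because ``the $q$-Laplacian and the power term decay fast.'' They do not, a priori. The only available bound is the radial estimate $\vab{u(r)}\leq Cr^{-(N-2)/2}$, so the source obeys $r^{N-1}\vab{u(r)}^{p-1}\lesssim r^{N-1-(p-1)(N-2)/2}$, and this exponent is $\geq-1$ exactly when $p\leq 2^*+1$; hence on the whole nonempty range $2^*<p\leq 2^*+1$ the source is not integrable at infinity and no single comparison with the Newtonian potential can produce the $r^{-(N-2)}$ decay. (Also, treating $\Delta_q u$ as a known fast-decaying source is circular before any decay of $u'$ is established.) This is precisely the technical heart of the paper's \cref{prop_decay}: one integrates the radial divergence-form equation \eqref{eq_rad} to get \eqref{eq_basic}, proves $u'(r)\to0$ via a monotone energy, and then \emph{iterates} the decay exponent $a_{n+1}=(p-1)a_n-2$ starting from $a_0=(N-2)/2$; the iteration is strictly increasing precisely because $p>2^*$ (equivalently $a_1>a_0$) and reaches $N-2$ in finitely many steps. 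Without this bootstrap (or an equivalent iteration), the claim $u_k\in L^2(\RN)$ for $N\geq5$, and therefore the asserted failure of \eqref{A}, remains unproved.
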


Regarding \cref{nonex1}, we exploit the arguments in \cite{BaSo17, IkTa19}, considering separately several cases according to the space dimension $N$, 
and making use of a comparison principle together with the Pohozaev identity in \cref{App}. 
The first part of \cref{t:zeromass}, on the other hand, follows from \cite[Theorem 1.7]{ByIkMaMa24}, 
while the fact that such a solution belongs to $X_\rad$ is obtained through a decay estimate $\vab{u(x)} \leq C(1+\vab{x})^{-N+2}$, which is interesting in its own right. 
The proof of this estimate is rather technical, due to the presence of multiple parameters and their mutual relationships. 
Since we can consider \eqref{eq_zero_mass} in $\wh{X}$ when $p=2^*$ (the corresponding functional is of class $C^1$), 
it is worth mentioning that the above decay estimate cannot be obtained for $p = 2^*$ when $N \geq 5$; see \cref{Rem:sharpness}. 
Thus, $p > 2^*$ in \cref{t:zeromass} is essential for the above decay estimate.

\medskip

The present paper is organized as follows. In \cref{s:pre}, we recall some classical results, such as the Gagliardo–Nirenberg inequality, 
and present a preliminary lemma concerning the energy functional $E_\alpha$ defined in \eqref{defE}. 
\cref{s:Ex-gl-mi} deals with the global minimization problem, where $\alpha$ varies with respect to $\alpha_0(m)$, 
and concludes with the proof of \cref{theomin}. \cref{Secloc} focuses on a local minimization problem in a suitable left neighborhood of $\alpha_0(m)$, 
and ends with the proof of \cref{theorem:local}. A mountain-pass type solution, established in \cref{t:ex-mp}, is derived in \cref{Secmou} under condition \eqref{A}. 
Sufficient conditions for the validity and failure of \eqref{A} are provided in \cref{SecLio,MountZero}, respectively, 
where Liouville-type nonexistence results (\cref{nonex1}) and existence results (\cref{t:zeromass}) are obtained in the zero-mass case $\lambda=0$. 
Finally, \cref{App} is devoted to regularity issues and the Pohozaev identity.



\subsubsection*{Notation}

Given any $r\in(1,\infty]$, we denote by $r' \coloneq\frac{r}{r-1}$ the H\"older conjugate of $r$ (with the position $r'=1$ if $r=\infty$). 
Moreover, $p^* \coloneq \frac{Np}{(N-p)_+}$ indicates the Sobolev conjugate of $p$. 

We indicate with $B_r(x)$ the $\R^N$-ball of center $x\in\R^N$ and radius $r>0$, omitting $x$ when it is the origin. For any $N$-dimensional Lebesgue measurable set $\Omega$, by $|\Omega|$ we mean its $N$-dimensional Lebesgue measure. 

If a sequence $(u_n)_{n \in \N}$ strongly converges to $u$, we write $u_n\to u$; if the convergence is in weak sense, we use $u_n\rightharpoonup u$. 

Given any measurable set $\Omega\subseteq \R^N$ and $q\in[1,\infty]$, $L^r(\Omega)$ stands for the standard Lebesgue space, whose norm will be indicated with $\Vab{\cdot}_{L^r(\Omega)}$, or simply $\Vab{\cdot}_r$ when $\Omega=\R^N$, 
while $H^1(\R^N)$ is the standard Sobolev space endowed with the norm $\Vab{\cdot}_2+\Vab{\nabla\cdot}_2$ and $H^1_\rad(\R^N)$ is its radial counterpart.

When $N \geq 3$, we will also use the Beppo Levi space $\cD^{1,2}(\R^N)$, 
which is the closure of the set $C^\infty_c(\R^N)$ of the compactly supported test functions with respect to the norm $\Vab{u}_{\cD^{1,2}(\R^N)} \coloneq \Vab{\nabla u}_2$.
Sobolev's inequality ensures the embedding $\cD^{1,2}(\R^N) \hookrightarrow L^{p^*}(\R^N)$ and $\cD^{1,2} (\R^N)$ can be expressed as 
$\cD^{1,2}(\R^N) = \set{ u\in L^{p^*}(\R^N) | \vab{ \nabla u }\in L^p(\R^N)}$.

In the sequel, the letter $C$ denotes a positive constant which may change its value at each passage; subscripts on $C$ emphasize its dependence on the specified parameters.

\section{Preliminaries}
\label{s:pre}

Except for the zero mass problem, we work on the following function space: 
\begin{equation*}
	\begin{aligned}
		X &\coloneq \Set{ u \in H^1(\RN) | \,\,|\nabla u| \in L^q(\RN) }, \quad 
		\Vab{u}_X \coloneq \Vab{u}_{H^1(\RN)} + \Vab{\nabla u}_q.
	\end{aligned}
\end{equation*}
Remark that
\begin{equation}\label{embeddings}
X \subset L^r(\RN) \quad \text{for any $r$} \in \begin{dcases}
	[2,q^*] & \text{if} \ q<N,\\
	[2,\infty) & \text{if} \ q = N, \\
	[2,\infty] & \text{if} \  q > N.
\end{dcases}
\end{equation}
In particular, $ X \subset L^q(\RN)$ holds.

Now we recall the Gagliardo–Nirenberg inequality, see \cite[p.125]{N59}.

\begin{lemma}
Let $p\in(2,r^*)$, $r>\frac{2N}{N+2}$. Then there exists a positive constant $C_r>0$ such that
\begin{equation}\label{GNq}
\Vab{u}_{p} \leq C_r \Vab{\nabla u}_{r}^{\nu_{p,r}} \Vab{u}_{2}^{(1-\nu_{p,r})}, \quad  u \in {L}^{2}(\R^{N}), \quad  |\nabla u| \in {L}^{r}(\R^{N}), 
\end{equation}
where $\nu_{p,r} \in (0,1)$ is given by
\[\nu_{p,r} \coloneq \frac{Nr}{r(N+2) - 2N} \cdot \frac{p-2}{p}. \]
\end{lemma}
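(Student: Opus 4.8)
The plan is to obtain the scale-invariant inequality \eqref{GNq} from an inhomogeneous embedding by optimizing over dilations. First I would reduce to $u \in C_c^\infty(\RN)$: the general case $u \in L^2(\RN)$, $\vab{\nabla u} \in L^r(\RN)$ then follows by mollification and truncation together with Fatou's lemma. I would also record that the hypotheses are exactly the ones needed to place the exponent in the correct range: $r > 2N/(N+2)$ is equivalent to $r^* > 2$, the strict inequality $p < r^*$ is equivalent to $\nu_{p,r} < 1$, and $p > 2$ forces $\nu_{p,r} > 0$, so that $\nu_{p,r} \in (0,1)$ holds precisely under the stated assumptions.

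The core step is the inhomogeneous bound
\[ \Vab{u}_p \le C\ab( \Vab{\nabla u}_r + \Vab{u}_2 ), \qquad p \in (2,r^*). \]
For $r < N$ this is immediate from Sobolev's inequality $\Vab{u}_{r^*} \le S_r \Vab{\nabla u}_r$ and $L^p$-interpolation between $L^2$ and $L^{r^*}$: writing $1/p = (1-\nu)/2 + \nu/r^*$, the log-convexity of the $L^p$-norms gives $\Vab{u}_p \le \Vab{u}_2^{1-\nu}\Vab{u}_{r^*}^{\nu}$, and one checks $\nu = \nu_{p,r}$; in fact for $r < N$ this already delivers \eqref{GNq} directly. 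For $r \ge N$ (so $r^* = \infty$ and $p$ ranges over $(2,\infty)$) the endpoint $L^{r^*}$-embedding is unavailable, and I would instead use the Sobolev, resp. Morrey, embedding to place $u$ in some $L^s(\RN)$ with $p < s < \infty$ — for $r=N$ any finite $s$ works, while for $r>N$ one combines the Morrey bound with $u \in L^2(\RN)$ to obtain $u \in L^\infty(\RN)$ — and then interpolate between $L^2$ and $L^s$ to recover the displayed inhomogeneous bound.

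Finally I would pass to the homogeneous inequality with the sharp exponent by applying the inhomogeneous bound to the dilation $u_\mu(x) \coloneq u(\mu x)$, $\mu > 0$. Since $\Vab{u_\mu}_p = \mu^{-N/p}\Vab{u}_p$, $\Vab{\nabla u_\mu}_r = \mu^{1-N/r}\Vab{\nabla u}_r$ and $\Vab{u_\mu}_2 = \mu^{-N/2}\Vab{u}_2$, this yields
\[ \Vab{u}_p \le C\ab( \mu^{a}\Vab{\nabla u}_r + \mu^{b}\Vab{u}_2 ), \qquad a \coloneq 1 - \frac{N}{r} + \frac{N}{p}, \quad b \coloneq \frac{N}{p} - \frac{N}{2}. \]
Here $b < 0$ because $p > 2$, while $a > 0$ because $p < r^*$, and $a - b = 1 + N/2 - N/r > 0$ by $r > 2N/(N+2)$. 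Minimizing the right-hand side over $\mu > 0$ by balancing the two terms gives $\Vab{u}_p \le C\Vab{\nabla u}_r^{-b/(a-b)}\Vab{u}_2^{a/(a-b)}$, and the elementary computation $-b/(a-b) = \nu_{p,r}$ (hence $a/(a-b) = 1 - \nu_{p,r}$) completes the proof.

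The main obstacle will be the uniform treatment of the three ranges $r < N$, $r = N$, $r > N$ — concretely, establishing the inhomogeneous embedding in the non-subcritical regimes $r \ge N$ — together with the bookkeeping that identifies the balancing exponent $-b/(a-b)$ with the stated value $\nu_{p,r}$. The reduction to $C_c^\infty(\RN)$ and the verification $\nu_{p,r} \in (0,1)$ are routine, but I would state both explicitly since they pin down exactly where the hypotheses $p \in (2,r^*)$ and $r > 2N/(N+2)$ are used.
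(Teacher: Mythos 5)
The paper offers no proof of this lemma at all --- it is recalled from Nirenberg \cite{N59} with a page citation --- so there is no internal argument to compare against; what you give is the standard self-contained derivation, and it is correct. I checked the bookkeeping: with $a = 1 - N/r + N/p$ and $b = N/p - N/2$ one indeed has $a>0 \iff p<r^*$, $b<0 \iff p>2$, $a-b>0 \iff r>2N/(N+2)$, and $-b/(a-b) = \frac{N(p-2)/(2p)}{(r(N+2)-2N)/(2r)} = \nu_{p,r}$; moreover your observation that for $r<N$ the interpolation $1/p=(1-\nu)/2+\nu/r^*$ already produces \eqref{GNq} with the correct exponent (so no dilation is needed there) is accurate. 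Two spots deserve more than the wave you give them. First, in the regime $r \geq N$ you cannot literally invoke the Sobolev or Morrey embedding of $W^{1,r}(\R^N)$, because only $u \in L^2$ and $|\nabla u| \in L^r$ are available and $u \in L^r$ is not known a priori (for $N>2$ the full $W^{1,N}$-norm is simply missing); for $r=N$ the standard repair is the Nirenberg iteration $\| |u|^t \|_{N/(N-1)} \leq C t \| |u|^{t-1} |\nabla u| \|_1$ followed by H\"older, bootstrapping upward from $\|u\|_2$, while for $r>N$ the oscillation bound from Morrey's inequality on balls combined with $u \in L^2$ does yield the $L^\infty$ control you claim. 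Second, in the density step the truncation error $u\,\nabla\chi_R$ is controlled by $\|u\|_2$ alone only when $r \leq 2$ (where, pleasingly, the hypothesis $r \geq 2N/(N+2)$ is exactly what makes it vanish as $R \to \infty$); for $r>2$ it is cleaner to skip truncation entirely, mollify only, and conclude by Fatou together with the fact that mollification does not increase the $L^2$- and $L^r$-norms --- the core estimates require smoothness but not compact support. With these standard repairs your proof is complete, and it buys the reader a self-contained argument where the paper only has a citation.
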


To state the next lemma, we recall $K$ in \eqref{def-Smrho} and prepare 
\begin{equation*}
	Q_\alpha(u) \coloneq
	\Vab{\nabla u}_2^2 + \frac{1}{q}\ab( \frac{N+2}{2} q - N ) \Vab{\nabla u}_q^q - \frac{\alpha N}{p} \pab{ \frac{p}{2} - 1  } \Vab{u}_p^p.
	\label{def-Q}
\end{equation*}

\begin{lemma}\label{lemma:I}
	Let $N \geq 1$, $q>2$, $p_2 < p < p_q $ and $\alpha\ge 0$. Then the following statements hold:
	\begin{enumerate}[label={\rm (\roman*)}]
		\item 
		for any bounded sequence $(u_n)_{n \in \N}$ in $X$,
		if $\Vab{\nabla u_n}_2 \to 0$ or $\Vab{\nabla u_n}_q \to 0$ or $\Vab{u_n}_{\wt{p}} \to 0$ for some $\wt{p} \in [2,q^*)$ holds, then 
		$\Vab{u_n}_p \to 0$; 
		
		\item
		there exists $C = (p,q, N, m, \alpha)> 0$  such that
		\begin{equation*}
			E_\alpha (u) \geq \frac{1}{2} \Vab{\nabla u}_2^2 +  \frac{1}{q}\|\nabla u\|^q_{q} - C\|\nabla u\|^{\nu_{p,q}p}_{q}
		\end{equation*}
		for any $u \in X$ satisfying $\|u\|^2_{2} \leq m$; since $p \nu_{p,q} < q$, $E_\alpha$ is coercive on $S_m$;
		
		\item 
		for any $\wh{\alpha}>0$ there exists $\rho_0 = \rho_0( m, \wh{\alpha}) > 0$ small enough such that, 
		for every $\alpha \in (0,\wh{\alpha}]$ and  $u \in X$ 
		satisfying both $\Vab{u}^2_{2} \leq m$ and $K(u) \leq \rho_0(m,\wh{\alpha})$, we have respectively
		\begin{equation}\label{Eest}
			E_\alpha(u) \geq \frac{1}{2q} K(u)
		\end{equation}
		and 
		\begin{equation}\label{Pest}
			Q_\alpha(u) \geq \frac{1}{2} K(u). 
		\end{equation}
	\end{enumerate}
\end{lemma}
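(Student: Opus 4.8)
The plan is to deduce all three statements from the Gagliardo--Nirenberg inequality \eqref{GNq}, tracking in each case whether the resulting power of a gradient norm lies above or below the homogeneity of the relevant functional. The two mass-critical thresholds enter only through the elementary equivalences $\nu_{p,2}\,p>2\Longleftrightarrow p>p_2$ and $\nu_{p,q}\,p<q\Longleftrightarrow p<p_q$, which I would record first.

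For (i) I would treat the three hypotheses in turn. If $\Vab{\nabla u_n}_q\to 0$, then \eqref{GNq} with $r=q$ and the boundedness of $\Vab{u_n}_2$ give $\Vab{u_n}_p\le C_q\Vab{\nabla u_n}_q^{\nu_{p,q}}\Vab{u_n}_2^{1-\nu_{p,q}}\to 0$. If $\Vab{u_n}_{\wt p}\to 0$ for some $\wt p\in[2,q^*)$, I would use \eqref{embeddings} (so that $\Vab{u_n}_t$ is bounded for the relevant $t$) and interpolate $\Vab{u_n}_p$ between $\Vab{u_n}_{\wt p}$ and a companion exponent lying on the other side of $p$, again forcing $\Vab{u_n}_p\to 0$. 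Finally, if $\Vab{\nabla u_n}_2\to 0$, I would apply \eqref{GNq} with $r=2$ at some subcritical exponent $s\in(2,2^*)$ (any $s\in(2,\infty)$ when $N\le 2$) to obtain $\Vab{u_n}_s\to 0$, and then invoke the previous case with $\wt p=s$, noting $s<q^*$.

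Part (ii) is a direct computation: \eqref{GNq} with $r=q$ and $\Vab{u}_2^2\le m$ give $\Vab{u}_p^p\le C_q^p\,m^{(1-\nu_{p,q})p/2}\Vab{\nabla u}_q^{\nu_{p,q}p}$, whence the displayed lower bound with $C=\tfrac{\alpha}{p}C_q^p m^{(1-\nu_{p,q})p/2}$. Since $\nu_{p,q}\,p<q$, writing $a=\Vab{\nabla u}_2$ and $b=\Vab{\nabla u}_q$ the right-hand side $\tfrac12 a^2+\tfrac1q b^q-C\,b^{\nu_{p,q}p}$ is bounded below and tends to $+\infty$ as $a^2+b^q\to\infty$; on $S_m$ the mass is fixed, so $E_\alpha$ is coercive and bounded below there.

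The crux is (iii), and the main obstacle is to secure a Gagliardo--Nirenberg bound $\Vab{u}_p^p\le C(m)\,K(u)^{\gamma}$ with a \emph{superlinear} power $\gamma>1$, valid whenever $\Vab{u}_2^2\le m$. The choice $r=q$ produces the exponent $\nu_{p,q}p/q<1$, useless near $K=0$, whereas $r=2$ produces the good exponent $\nu_{p,2}p/2>1$ but is licit only for $p<2^*$. To cover the remaining range $p\ge 2^*$ (which requires $N\ge 3$) I would interpolate the gradient: choose $r\in[2,q)$ with $r(1+2/N)<p<r^*$ --- such $r$ exists because $p>p_2$ makes $r_1:=pN/(N+2)$ exceed $2$, $p<p_q$ makes $r_1<q$, and $p>2$ makes $pN/(N+p)<r_1$, so any $r\in(\max\{2,pN/(N+p)\},r_1)$ works --- apply \eqref{GNq} in $L^r$, and bound $\Vab{\nabla u}_r\le\Vab{\nabla u}_2^{1-\sigma}\Vab{\nabla u}_q^{\sigma}$ with $\tfrac1r=\tfrac{1-\sigma}{2}+\tfrac{\sigma}{q}$. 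Using $\Vab{\nabla u}_2^2,\Vab{\nabla u}_q^q\le K(u)$ then yields exactly $\gamma=\nu_{p,r}\,p/r$, and $\gamma>1\Longleftrightarrow p>r(1+2/N)$, which holds by construction. Granting this estimate, both inequalities follow by absorption near $K=0$: from $\tfrac12\ge\tfrac1q$ one gets $E_\alpha(u)\ge\tfrac1q K(u)-\tfrac{\alpha}{p}C(m)K(u)^\gamma$, and from $\tfrac{N+2}{2}-\tfrac Nq\ge 1$ (valid for $q\ge 2$) one gets $Q_\alpha(u)\ge K(u)-\tfrac{\alpha N}{p}\bigl(\tfrac p2-1\bigr)C(m)K(u)^\gamma$; choosing $\rho_0=\rho_0(m,\wh\alpha)$ so small that $\wh\alpha\,C(m)\,\rho_0^{\gamma-1}$ stays below $\tfrac{p}{2q}$ and $\tfrac{p}{2N(p/2-1)}$ respectively delivers \eqref{Eest} and \eqref{Pest} uniformly for $\alpha\in(0,\wh\alpha]$ and $K(u)\le\rho_0$.
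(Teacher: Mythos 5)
Your proof is correct. Parts (i) and (ii), as well as the range $p<2^*$ in (iii), coincide with the paper's argument (Gagliardo--Nirenberg with $r=2$ resp.\ $r=q$, absorption since $\nu_{p,2}p>2$ and $\nu_{p,q}p<q$), but your treatment of the remaining range $p\ge 2^*$ in (iii) takes a genuinely different route. The paper chooses $r\in(2,\min\{N,q\})$ with $\max\{q,p_q\}<r^*$, splits pointwise $|s|^p\le|s|^{2^*}+|s|^{r^*}$, applies Sobolev's inequality to each piece, and converts $\|\nabla u\|_r^{r^*}$ into $C\big(\|\nabla u\|_2^{r^*}+\|\nabla u\|_q^{r^*}\big)$ by interpolation followed by Young's inequality; smallness of $K(u)$ then absorbs each term separately, since every exponent strictly exceeds the homogeneity ($2$ or $q$) of the matching gradient norm. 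You instead run a single Gagliardo--Nirenberg estimate at an intermediate exponent $r$ with $r(1+2/N)<p<r^*$, and use the multiplicative gradient interpolation $\|\nabla u\|_r\le\|\nabla u\|_2^{1-\sigma}\|\nabla u\|_q^{\sigma}\le K(u)^{1/r}$ to get the unified bound $\|u\|_p^p\le C(m)\,K(u)^{\gamma}$ with $\gamma=\nu_{p,r}\,p/r>1$; your verification that such $r$ exists (from $p>p_2$, $p<p_q$, $p>2$), that $\gamma>1$ is equivalent to $p>r(1+2/N)$, and that the coefficient of $\|\nabla u\|_q^q$ in $Q_\alpha$ is at least $1$ for $q\ge 2$, are all accurate, so your two explicit smallness conditions on $\widehat{\alpha}\,C(m)\,\rho_0^{\gamma-1}$ do deliver \eqref{Eest} and \eqref{Pest} uniformly in $\alpha\in(0,\widehat{\alpha}]$. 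What your version buys is a single superlinear power of $K(u)$ that handles $p=2^*$ inside the same case (the paper treats $p=2^*$ via Sobolev in its first case) and makes the uniformity in $m$ and $\alpha$ completely explicit through the one exponent $\gamma$; what the paper's version buys is that it dispenses with locating $r$ in the window $\big(\max\{2,pN/(N+p)\},\,pN/(N+2)\big)$ and keeps the two gradient norms separate, at the cost of the extra pointwise splitting and a Young-inequality step.
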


\begin{proof}
	(i) When $\Vab{\nabla u_n}_2 \to 0$ (resp. $\Vab{\nabla u_n}_q \to 0$), 
	\eqref{GNq} with $r=2$ (resp. $r=q$) gives 
	$\Vab{u_n}_{s} \to 0$ for any $s \in (2,2^*)$ (resp. $s \in (2,q^*)$). 
	Since $(u_n)_{n \in \N}$ is bounded in $X$, \eqref{embeddings}, $p \in (2,q^*)$ and 
	the interpolation inequality yield $\Vab{u_n}_p \to 0$. 
	Similarly, when $\Vab{u_n}_{\wt{p}} \to 0$ holds, $\Vab{u_n}_p \to 0$ can be proved via the interpolation inequality 
	and the boundedness of $(u_n)$ in $X$.

	(ii) Applying \eqref{GNq} with $r=q$, we get
	\[
	E_\alpha(u) \ge \frac{1}{2} \Vab{\nabla u}_2^2 + \frac{1}{q} \Vab{\nabla u}^q_q - \frac{\alpha}{p}C_q^p \Vab{\nabla u}_{q}^{\nu_{p,q}p} m^{(1-\nu_{p,q})p/2},
	\]
	and the coerciveness of $E$ follows since $q>\nu_{p,q}p$ by \eqref{middle}.
	
	(iii) 
	Let $\wh{\alpha} > 0$ be given and suppose $\alpha \in (0,\wh{\alpha}]$. 
	When $p_2 < p \leq 2^*$, applying \eqref{GNq} with $r=2$ (or Sobolev's inequality) implies that for each $u \in X$ with $\Vab{u}_2^2 \leq m$,
	\[
	\begin{aligned}
		E_\alpha (u) 
		&\geq 
		\frac{1}{2} \Vab{\nabla u}_2^2 + \frac{1}{q} \Vab{\nabla u}^q_q  - \frac{ \wh{\alpha} }{p}C_2^p \Vab{\nabla u}_{2}^{\nu_{p,2}p} 	m^{(1-\nu_{p,2})p/2}
		\\
		&\geq 
		\frac{1}{q} K(u) - \frac{ \wh{\alpha} }{p}C_2^p K(u)^{\nu_{p,2}p / 2} 	m^{(1-\nu_{p,2})p/2}.
	\end{aligned}
	\]
	Since $\nu_{p,2} p > 2$ holds by virtue of \eqref{middle}, 
	for a sufficiently small $\rho_0 = \rho_0(m, \wh{\alpha} ) > 0$, if $u \in X$ satisfies $\Vab{u}_2^2 \leq m$ and $K(u) \leq \rho_0(m,\wh{\alpha})$, 
	then \eqref{Eest} holds. 
	For \eqref{Pest}, by $q < \frac{N+2}{2} q - N$, a similar argument works and \eqref{Pest} may be proved. 
	
	When $2^*<p < p_q$, choose $r \in (2, \min \set{N,q} )$ so that $(p_q=)\max \set{q, p_q} < r^*$.
	By dividing the cases $\vab{s} \leq 1$ and $\vab{s} >1$ separately, one can prove
	\[
	\vab{s}^p \leq \vab{s}^{2^*} + \vab{s}^{r^*} \quad \text{for all $s \in \R$},
	\]
	giving
	\[
	\Vab{u}_p^p \leq \Vab{u}_{2^*}^{2^*} + \Vab{u}_{r^*}^{r^*} \leq C \Vab{\nabla u}_2^{2^*} + C \Vab{\nabla u}_r^{r^*}. 
	\]
	Since $2 < r < q$, there exists $\theta \in (0,1)$ such that 
	\[
	\frac{1}{r} = \frac{\theta}{2} + \frac{1-\theta}{q}. 
	\]
	Then the interpolation inequality followed by Young's inequality gives 
	\[
	\Vab{\nabla u}_r \leq \Vab{\nabla u}_2^\theta \Vab{\nabla u}_q^{1-\theta} \leq \theta \Vab{\nabla u}_2 + (1-\theta) \Vab{\nabla u}_q.
	\]
	In particular, 
	\[
	\Vab{\nabla u}_r^{r^*} \leq C \Vab{\nabla u}_2^{r^*} + C \Vab{\nabla u}_q^{r^*}. 
	\]
	Therefore, 
	\[
	\Vab{u}_p^p \leq C \ab(  \Vab{\nabla u}_2^{2^*} + \Vab{\nabla u}_2^{r^*} ) + C \Vab{\nabla u}_q^{r^*}
	\]
	and this implies that if $u \in X$ satisfies $\Vab{u}_2^2 \leq m$ and $K(u) \leq \rho$, then 
	\begin{equation*}
		\begin{aligned}
			E_\alpha(u) 
			&\geq 
			\frac{1}{2} \Bab{ 1 - C \wh{\alpha} \ab( \Vab{\nabla u}_2^{2^*-2} + \Vab{\nabla u}_2^{ r^* -2 } ) } \Vab{\nabla u}_2^2 
			+ \frac{1}{q} \ab( 1 - C \wh{\alpha} \Vab{\nabla u}_q^{r^*-q} ) \Vab{\nabla u}_q^q
			\\
			& \geq 
			\frac{1}{2} \Bab{ 1 - C \wh{\alpha} \ab(  \rho^{(2^*-2)/2} + \rho^{ (r^*-2)/2 } )  } \Vab{\nabla u}_2^2 
			+ \frac{1}{q} \Bab{ 1 - C \wh{\alpha} \rho^{ (r^*-q)/q } } \Vab{\nabla u}_q^q.
		\end{aligned}
	\end{equation*}
	Since $2 < q < r^*$, if $\rho > 0$ is sufficiently small, then \eqref{Eest} holds. 
	Again, the same argument works for \eqref{Pest} and we omit the details. 
\end{proof}

\section{Minimizers with negative energy}
\label{s:Ex-gl-mi}

In the present section, we will consider the following minimizing problem
\[
e_\alpha(m) \coloneq \inf_{u \in S_m} E_\alpha(u),
\]
where $E_\alpha$ and $S_m$ are respectively defined in \eqref{defE} and \eqref{defSm}. 
We first prove the following result:

\begin{theorem}\label{prop:min}
Let $N \geq 1 $, $q>2$, $p_2< p <p_q $ and $m>0$. Then $-\infty < e_\alpha(m) \leq 0$ holds for every $\alpha > 0$. 
Furthermore, there exists $\alpha_0(m)>0$ such that the following assertions hold: 
\begin{enumerate}[label={\rm (\roman*)}]
	\item 
	$e_\alpha(m) < 0$ if and only if $\alpha > \alpha_0 (m)$; 
	
	\item 
	when $\alpha = \alpha_0(m)$, there is a $u_0 \in S_m$ such that $u_0$ is radially symmetric and 
	satisfies $E_{\alpha_0(m)} (u_0) = 0 = e_{\alpha_0(m)} (m)$, 
	that is, $u_0$ is a radial minimizer;
	
	\item 
	when $0<\alpha < \alpha_0(m)$, $e_{\alpha} (m)$ is never attained. 	
\end{enumerate}
\end{theorem}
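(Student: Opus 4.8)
The whole argument is organised around the behaviour of $E_\alpha$ along the mass--preserving dilation $u_t(x)\coloneq t^{N/2}u(tx)$, which keeps $\Vab{u_t}_2=\Vab{u}_2$ and gives
\[
E_\alpha(u_t)=\frac{t^2}{2}\Vab{\nabla u}_2^2+\frac{t^a}{q}\Vab{\nabla u}_q^q-\frac{\alpha t^b}{p}\Vab{u}_p^p,\qquad a\coloneq\frac{(N+2)q}{2}-N,\quad b\coloneq\frac{N(p-2)}{2}.
\]
From \eqref{middle} one checks $2<b<a$, and $Q_\alpha(u)=\frac{\rd}{\rd t}E_\alpha(u_t)\big|_{t=1}$. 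The two elementary bounds come first: $e_\alpha(m)>-\infty$ is the coercivity of \cref{lemma:I}(ii), while letting $t\to 0^+$ (the leading term is $\tfrac{t^2}{2}\Vab{\nabla u}_2^2$, as $2<b<a$) shows $E_\alpha(u_t)\to 0$, hence $e_\alpha(m)\le 0$ for every $\alpha>0$.

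Next I would optimise over dilations. Since $\{u_t:t>0\}\subset S_m$ whenever $u\in S_m$, we have $e_\alpha(m)\ge 0$ iff $\inf_{t>0}E_\alpha(u_t)\ge 0$ for all $u\in S_m$; dividing by $t^b$, this is equivalent to $\tfrac{\alpha}{p}\Vab{u}_p^p\le\Phi(u)$ for all $u\in S_m$, where
\[
\Phi(u)\coloneq\inf_{t>0}\left[\frac{t^{2-b}}{2}\Vab{\nabla u}_2^2+\frac{t^{a-b}}{q}\Vab{\nabla u}_q^q\right]=c_0\left(\Vab{\nabla u}_2^2\right)^{\gamma_1}\left(\Vab{\nabla u}_q^q\right)^{\gamma_2},
\]
with $\gamma_1\coloneq\frac{a-b}{a-2}$, $\gamma_2\coloneq\frac{b-2}{a-2}$, $\gamma_1+\gamma_2=1$ and $c_0=c_0(a,b,q)>0$ (the last equality is the elementary minimisation of a sum of two powers with exponents of opposite sign). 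Setting
\[
\alpha_0(m)\coloneq p\inf_{u\in S_m,\,u\ne 0}\frac{\Phi(u)}{\Vab{u}_p^p},
\]
we obtain by construction $e_\alpha(m)\ge 0\Leftrightarrow\alpha\le\alpha_0(m)$, which together with $e_\alpha(m)\le 0$ is exactly assertion (i). That $\alpha_0(m)<\infty$ is clear (test with any single $u$), and $\alpha\mapsto e_\alpha(m)$ is non--increasing, so the only genuinely substantial point is $\alpha_0(m)>0$.

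Assertion (iii) is then immediate: if $0<\alpha<\alpha_0(m)$ and some $u\in S_m$ realised $E_\alpha(u)=e_\alpha(m)=0$, then evaluating at $t=1$ gives $\tfrac12\Vab{\nabla u}_2^2+\tfrac1q\Vab{\nabla u}_q^q=\tfrac{\alpha}{p}\Vab{u}_p^p$, whence
\[
\Phi(u)\le\frac12\Vab{\nabla u}_2^2+\frac1q\Vab{\nabla u}_q^q=\frac{\alpha}{p}\Vab{u}_p^p<\frac{\alpha_0(m)}{p}\Vab{u}_p^p\le\Phi(u),
\]
a contradiction, since $\Vab{u}_p>0$ by \cref{lemma:I}(i). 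The positivity $\alpha_0(m)>0$ is the principal obstacle: it is the generalized Gagliardo--Nirenberg inequality $\Vab{u}_p^p\le C_m(\Vab{\nabla u}_2^2)^{\gamma_1}(\Vab{\nabla u}_q^q)^{\gamma_2}$ on $S_m$. For $p_2<p\le 2^*$ I would get it as a geometric mean of \eqref{GNq} with $r=2$ and $r=q$, which give $\Vab{u}_p^p\le C_m(\Vab{\nabla u}_2^2)^{b/2}$ and $\Vab{u}_p^p\le C_m(\Vab{\nabla u}_q^q)^{b/a}$; raising them to powers $s$ and $1-s$ with $s=\frac{2(a-b)}{b(a-2)}\in(0,1)$ hits exactly $(\gamma_1,\gamma_2)$, the weights closing because $\gamma_1+\gamma_2=1$. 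For $2^*<p<p_q$, where $r=2$ is unavailable, I would replace it by an intermediate $r\in(Np/(N+p),\min\{q,N\})$ with $p<r^*$, estimate $\Vab{\nabla u}_r\le\Vab{\nabla u}_2^\theta\Vab{\nabla u}_q^{1-\theta}$ as in the proof of \cref{lemma:I}(iii), and again interpolate against $r=q$, choosing $r$ so that the exponents match $(\gamma_1,\gamma_2)$; this matching is the delicate computation.

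Finally, the attainment in (ii) at $\alpha=\alpha_0(m)$, the remaining delicate step, I would obtain by minimising the scale--invariant ratio $\Phi(u)/\Vab{u}_p^p$ over $S_m$. Given a minimising sequence, I would Schwarz--symmetrise (P\'olya--Szeg\H{o} decreases $\Vab{\nabla u}_2$ and $\Vab{\nabla u}_q$ while fixing $\Vab{u}_2,\Vab{u}_p$, hence decreases the ratio) and use the dilation to normalise $\Vab{\nabla u}_2=1$ on $S_m$. Two one--sided bounds then give compactness: the generalized inequality (with $r$ chosen so that its $\Vab{\nabla u}_q$--exponent is $<\gamma_2$) keeps $\Vab{\nabla u}_q$ bounded, while \eqref{GNq} with $r=q$ together with $\gamma_2<b/a$ shows that $\Vab{\nabla u}_q\to 0$ would force the ratio to $+\infty$; hence $\Vab{u}_p$ stays bounded away from $0$. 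The sequence is thus bounded in $X_\rad$ and non--vanishing, the compact radial embedding $X_\rad\hookrightarrow L^p$ gives a nonzero weak limit $u_0$, and weak lower semicontinuity yields $\Phi(u_0)/\Vab{u_0}_p^p\le\alpha_0(m)/p$. Mass loss is excluded by the amplitude--scaling identity $\Phi(\kappa u)/\Vab{\kappa u}_p^p=\kappa^{-2p/(N+2)}\Phi(u)/\Vab{u}_p^p$ (which also makes $\alpha_0$ non--increasing in $m$): a limit of strictly smaller mass could be rescaled up to mass $m$ to undercut the infimum. Hence $u_0\in S_m$ is a radial extremal, and dilating it to the minimising $t$ in $\Phi(u_0)$ produces a radial $u_0\in S_m$ with $E_{\alpha_0(m)}(u_0)=0=e_{\alpha_0(m)}(m)$. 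The case $N=1$, where $X_\rad\hookrightarrow L^p$ must be justified separately, is the last point requiring care.
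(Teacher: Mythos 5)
Your proof takes essentially the same route as the paper: your quotient $\Phi(u)/\Vab{u}_p^p$ equals $c_0\,J(u)^{\gamma_1}$ for the paper's auxiliary functional $J$, so your $\alpha_0(m)$ is exactly the paper's threshold, and your arguments for (i), (iii), the positivity of $\alpha_0(m)$ (Gagliardo--Nirenberg/Sobolev interpolation split at $2^*$), and the attainment in (ii) (Schwarz symmetrization, compact radial embedding, amplitude scaling $\kappa^{-2p/(N+2)}$ to exclude mass loss) mirror the paper's proof of \cref{prop:min} step by step. The two points you defer do close, and are precisely what the paper supplies: in the case $2^*<p<p_q$ the exponent matching occurs at $r=Np/(N+2)$, which is admissible since $p_2<p<p_q$ gives $2<r<q$; and for $N=1$ compactness is recovered from the monotonicity of the symmetrized minimizing sequence, whose locally uniform convergence together with a uniform tail estimate yields $\Vab{u_n-u_\infty}_\infty\to 0$ and hence $L^p$ convergence.
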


\begin{proof}
The assertion $-\infty < e_\alpha(m)$ follows from \cref{lemma:I} (ii). 
On the other hand, for $u \in S_m$ and $\theta > 0$, define
\[
u_\theta (x) \coloneq \theta^{N/2} u(\theta x). 
\]
Notice that 
\begin{equation}\label{sc}
\begin{aligned}
	&\Vab{u_\theta}_2^2 = \Vab{u}_2^2, 
	& & 
	\Vab{u_\theta}_p^p = \theta^{Np/2 - N} \Vab{ u }_p^p = \theta^{p \delta_p} \Vab{u}_p^p, 
	\\
	&\Vab{\nabla u_\theta}_2^2 = \theta^2 \Vab{\nabla u}_2^2, 
	& &
	\Vab{\nabla u_\theta}_q^q = \theta^{ q\delta_q + q } \Vab{\nabla u}_q^q,
\end{aligned}
\end{equation}
where
$\delta_s \coloneq \frac{N}{2s} (s-2)$, which is equivalent to $s \delta_s = \frac{N}{2} (s-2)$. 
In particular, $u_\theta \in S_m$ for every $\theta > 0$ provided $u \in S_m$. Since 
\[
E_\alpha(u_\theta) = \theta^2 
\ab[ \frac{1}{2} \Vab{\nabla u}_2^2 + \frac{  \theta^{q\delta_q + q -2}  }{q} \Vab{\nabla u}_q^q - \frac{\alpha}{p} \theta^{p\delta_p -2} \Vab{u}_p^p ].
\]
it is easily seen from $p\delta_p > 2$ that $E_\alpha(u_\theta) \to 0$ as $\theta \to 0^+$ and $e_\alpha(m) \leq 0$ holds.

We next prove the existence of $\alpha_0(m)$. For $u \in S_m$, set 
\[
\psi_{\alpha,u}(\theta) \coloneq \frac{1}{2} \Vab{\nabla u}_2^2 + \frac{  \theta^{q\delta_q + q -2}  }{q} \Vab{\nabla u}_q^q - \frac{\alpha}{p} \theta^{p\delta_p -2} \Vab{u}_p^p
= \theta^{-2} E_\alpha(u_\theta).
\]
From $q\delta_q + q > p\delta_p > 2$ and 
\[
\psi_{\alpha,u}'(\theta) = \theta^{ p\delta_p - 3 } 
\ab[ \frac{q(1+\delta_q)-2}{q} \theta^{ q(1+\delta_q) - p \delta_p } \Vab{\nabla u}_q^q 
- \frac{\alpha}{p} \ab( p \delta_p - 2 ) \Vab{u}_p^p ],
\]
it follows that $\psi_{\alpha,u}$ takes the global minimum at $\theta = \bar{\theta}_{\alpha,u}$ where 
\begin{equation}\label{def:bartheta}
	\bar{\theta}_{\alpha,u} \coloneq 
	\ab[ \frac{ q\alpha (p\delta_p-2)  \Vab{u}_p^p }{ p \Bab{ q (1+\delta_q) -2  } \Vab{\nabla u}_q^q}  ]^{ \frac{ 1 }{ q(1+\delta_q) - p\delta_p } }
\end{equation}
and the global minimum of $\psi_{\alpha,u}$ is given by 
\begin{equation}\label{e:psi-gm}
	\begin{aligned}
		&\psi_{\alpha,u} \ab( \bar{\theta}_{\alpha,u} ) 
		\\
		= \ &
		\frac{1}{2} \Vab{\nabla u}_2^2 
		+ \frac{1}{q} \ab[ \frac{q\alpha (p\delta_p - 2) \Vab{u}_p^p }{ p \Bab{ q (1+\delta_q) -2 } \Vab{\nabla u}_q^q } ]^{ \frac{ q(1+\delta_q) - 2 }{q(1+\delta_q) - p\delta_p} } 
		\Vab{\nabla u}_q^q
		\\
		&
		- \frac{\alpha}{p} 
		\ab[ \frac{q\alpha (p\delta_p - 2) \Vab{u}_p^p }{ p \Bab{ q (1+\delta_q) -2 } \Vab{\nabla u}_q^q } ]^{ \frac{ p\delta_p - 2 }{q(1+\delta_q) - p\delta_p} } \Vab{u}_p^p
		\\
		= \ &  
		\frac{1}{2} \Vab{\nabla u}_2^2  
		- \alpha^{ \frac{ q(1+\delta_q) - 2 }{ q ( 1+\delta_q) - p\delta_p } } 
		\ab[ \frac{q  (p\delta_p - 2) }{ p \Bab{ q (1+\delta_q) -2 } } ]^{ \frac{ p\delta_p - 2 }{q(1+\delta_q) - p\delta_p} } \\
		&\hspace{5cm} \cdot
		\ab[ \frac{q(1+\delta_q) - p\delta_p}{ p \Bab{ q(1+\delta_q) - 2 }  } ] 
		\Vab{u}_p^{ p \cdot \frac{ q(1+\delta_q) -2 }{q(1+\delta_q) - p\delta_p}  } 
		\Vab{\nabla u}_q^{ q \cdot \frac{2-p\delta_p}{q(1+\delta_q) - p\delta_p} }.
	\end{aligned}
\end{equation}
From $2 < p\delta_p < q (1+\delta_q)$, it follows that $\psi_{\alpha,u}( \bar{\theta}_{\alpha,u} ) < 0$ is equivalent to 
\[
\alpha^{ \frac{ q(1+\delta_q) - 2 }{ q ( 1+\delta_q) - p\delta_p } } 
> 
\ab[ \frac{q  (p\delta_p - 2) }{ p \Bab{ q (1+\delta_q) -2 } } ]^{ - \frac{ p\delta_p - 2 }{q(1+\delta_q) - p\delta_p} } 
\cdot 
\frac{ p \Bab{ q(1+\delta_q) - 2 } }{q(1+\delta_q) - p\delta_p}
\cdot 
\frac{1}{2} \cdot 
\frac{\Vab{\nabla u}_2^2 \Vab{\nabla u}_q^{ q \cdot \frac{p\delta_p-2}{q(1+\delta_q) - p\delta_p}  } }
{\Vab{u}^{ p \cdot \frac{ q(1+\delta_q) -2 }{q(1+\delta_q) - p\delta_p} } }
.
\]
To proceed further, we set 
\[
J(u) \coloneq 
\frac{\Vab{\nabla u}_2^2 \Vab{\nabla u}_q^{ q \cdot \frac{p\delta_p - 2 }{q(1+\delta_q) - p\delta_p} }}
{ \Vab{u}_p^{ p  \cdot \frac{ q(1+\delta_q) -2 }{q(1+\delta_q) - p\delta_p} } }
\quad \text{for $u \in X \setminus \set{0}$}
\]
and consider the following minimizing problem: 
\begin{equation}\label{mp}\begin{aligned}
d(m) &\coloneq \inf \Set{ J(u) | u  \in S_m}.
\end{aligned}\end{equation}
By defining 
\begin{equation}\label{alpha0}
	\alpha_0(m) 
	\coloneq  
	\Bab{
		\ab[ \frac{q  (p\delta_p - 2) }{ p \Bab{ q (1+\delta_q) -2 } } ]^{ - \frac{ p\delta_p - 2 }{q(1+\delta_q) - p\delta_p} } 
		\cdot 
		\frac{ p \Bab{ q(1+\delta_q) - 2 } }{q(1+\delta_q) - p\delta_p}
		\cdot 
		\frac{1}{2} \cdot d(m) }^{ \frac{ q(1+\delta_q) - p\delta_p }{q(1+\delta_q)-2} },
\end{equation}
the above computations show that $\alpha > \alpha_0(m)$ if and only if $e_\alpha(m) <0$. 
In addition, when $\alpha = \alpha_0(m)$, if there is a radial $u_0 \in S_m$ such that $J(u_0) = d(m)$, 
then $\psi_{\alpha_0(m),u_0} (\bar{\theta}_{\alpha_0(m),u_0}  ) = 0$ and $E( (u_0)_{ \ov{\theta}_{\alpha_0(m),u_0}} ) = 0$. 
Therefore, assertion (ii) holds provided $d(m)$ has a radial minimizer. 
Furthermore, when $0<\alpha < \alpha_0(m)$, it follows from \eqref{e:psi-gm}, \eqref{mp} and \eqref{alpha0} that for each $u \in S_m$, 
\[
\alpha^{ \frac{ q(1+\delta_q) - 2 }{ q ( 1+\delta_q) - p\delta_p } } 
< 
\ab[ \frac{q  (p\delta_p - 2) }{ p \Bab{ q (1+\delta_q) -2 } } ]^{ - \frac{ p\delta_p - 2 }{q(1+\delta_q) - p\delta_p} } 
\cdot 
\frac{ p \Bab{ q(1+\delta_q) - 2 } }{q(1+\delta_q) - p\delta_p}
\cdot 
\frac{1}{2}\cdot J(u),
\]
which implies 
\[
\begin{aligned}
	E_\alpha(u) 
	= \psi_{\alpha,u}(1) \geq 
	\psi_{\alpha,u}( \bar{\theta}_{\alpha,u} ) 
	> \frac{1}{2} \Vab{\nabla u}_2^2 - \frac{1}{2} \Vab{\nabla u}_2^2 = 0
	\quad \text{for each $u \in S_m$.}
\end{aligned}
\]
Hence, $e_\alpha(m) = 0$ and $E_\alpha(v) > 0$ for each $v \in S_m$, which means that 
there is no minimizer for $e_\alpha(m)$ and assertion (iii) holds.

To complete the proof, what remains to prove is $\alpha_0 (m) > 0$, namely $d(m) > 0$ and $d(m) $ is attained 
by a radial function. 
For this purpose, we consider $J(\tau u)$ for $\tau > 0$. By $q\delta_q = N (q-2)/2$ and $p\delta_p = N(p-2)/2$, we see 
\[
\begin{aligned}
	2 + \frac{q(p\delta_p-2)}{q(1+\delta_q) -p\delta_p} - \frac{ p \Bab{ q(1+\delta_q) -2 } }{q(1+\delta_q) - p\delta_p}
	&=
	\frac{ 2q(1+\delta_q) -2p\delta_p + pq \delta_p - 2q - pq - pq\delta_q + 2p }
	{q(1+\delta_q) - p\delta_p}
	\\
	&=
	\frac{ q\delta_q ( 2 -p) + (q-2) p \delta_p + p(2-q) }
	{q(1+\delta_q) - p\delta_p}
	\\
	&=
	\frac{ \frac{N}{2} (q-2)  ( 2 -p) + (q-2) \frac{N}{2} (p-2) + p(2-q) }
	{q(1+\delta_q) - p\delta_p} 
	\\
	&= -
	\frac{p(q-2)}{q(1+\delta_q) - p\delta_p} < 0.
\end{aligned}
\]
Therefore, 
\begin{equation}\label{J:multi}
	\begin{aligned}
		J( \tau u ) 
		&= 
		\tau^{ - \frac{p(q-2)}{q(1+\delta_q) - p\delta_p} } J(u) 
		\quad \text{for every $\tau \in (0,\infty)$ and $u \in X \setminus \set{0}$}. 
	\end{aligned}
\end{equation}
In particular, $u \in S_1$ attains $d(1)$ if and only if $\sqrt{m} u$ does $d(m)$ and 
\begin{equation}\label{e:dmd1}
	d(m) = m^{ - \frac{p(q-2)}{2 \Bab{ q(1+\delta_q) - p\delta_p} } } d(1).
\end{equation}
Thus, it suffices to prove that $d(1) > 0$ and there is a radial minimizer for $d(1)$.

We first show 
\begin{equation}\label{rewritemp}
	\begin{aligned}
		d(1) &= \inf \Set{ J(u) |  u \in S_1, \Vab{u}_p = 1}
		\\
		& = \inf \Set{ J(u) | u \in S_1, \Vab{u}_p = 1, \text{$u(x) = u(\vab{x}) \geq 0$: nonincreasing in $\vab{x}$}}.
	\end{aligned}
\end{equation}
Indeed, for the first equality, let $v \in S_m$ and consider $v_\theta$ where $\theta = \Vab{u}_p^{-1/\delta_p}$. From \eqref{sc} and 
\[
2 + q(\delta_q + 1) \frac{p\delta_p - 2}{q(1+\delta_q) - p\delta_p} - p\delta_p \frac{q(1+\delta_q) - 2}{q(1+\delta_q) - p \delta_p} 
= 2 + \frac{-2q(\delta_q+1) + 2 p \delta_p }{q(1+\delta_p) - p\delta_p} = 0,
\]
it follows that $J(v_\theta) = J(v)$. 
Hence, the first equality in \eqref{rewritemp} holds. For the second equality in \eqref{rewritemp}, 
let $u \in C^\infty_c(\RN)$ fulfill $\Vab{u}_2^2 = 1$ and $\Vab{u}_p = 1$, and 
$u^*$ denote the symmetric decreasing rearrangement of $u$. 
By \cite[Lemma 1]{Ta76}, we have 
\[
\Vab{u}_r=\Vab{u^*}_r=1 \quad (r=2,p), \quad \Vab{\nabla u^*}_r \leq \Vab{\nabla u}_r \quad (r=2,q), 
\quad J(u^*) \leq J(u).
\]
The density of $C^\infty_c(\RN)$ with $\Vab{u}_2 = 1$ yields the second equality in \eqref{rewritemp}.


\smallskip 

\noindent
\textbf{Claim 1:}
\textsl{$d(1) > 0$.}

\smallskip

Let $(u_n)_{n \in \N} \subset S_1$ satisfy $\Vab{u_n}_p = 1$ and $J(u_n) \to d(1)$. 
By the Gagliardo--Nirenberg inequality \eqref{GNq}, 
\[
1 = \Vab{u_n}_p \leq C_q \Vab{\nabla u_n}_q^{\nu_{p,q}}. 
\]
In particular, 
\begin{equation}\label{ineq:inf-graunlq}
	0< \inf_{n \in \N} \Vab{\nabla u_n}_q \eqcolon c_0. 
\end{equation}
Hence, 
\begin{equation}\label{bdd-L2-gra-un}
	d(1) + o(1) = J(u_n) \geq c_0^{ q \cdot \frac{p\delta_p -2 }{ q(1+\delta_q) - p\delta_p } } \Vab{\nabla u_n}_2^2.
\end{equation}
Thus, $(\nabla u_n)_{n \in \N}$ is bounded in $L^2(\R^N)$. 
From now on, the arguments are divided into two cases: (I) $p_2 < p \leq 2^*$, (II) $2^* < p < p_q$.

In case (I), if $d(1) = 0$ holds, then \eqref{bdd-L2-gra-un} yields $\Vab{\nabla u_n}_2 \to 0$. 
Thus, \eqref{GNq} with $r=2$ (or Sobolev's inequality for $p=2^*$) gives $\Vab{u_n}_p \to 0$, which is a contradiction since $\Vab{u_n}_p = 1$. 
Therefore, $d(1) > 0$ holds in case (I).

In case (II), from $2< q$ and $2^* < p < p_q$, 
there exists $r \in (2, \min \set{N,q} )$ such that 
\[
r^* = p, \quad  \text{that is}, \quad \frac{1}{r} = \frac{1}{N} + \frac{1}{p} = \frac{p+N}{Np}. 
\]
Then for each $u \in S_1$ with $\Vab{u}_p = 1$, Sobolev's inequality and the interpolation inequality lead to 
\begin{equation*}\label{gra-u_n2toq}
	1 = \Vab{u}_p \leq C \Vab{\nabla u}_r \leq C \Vab{\nabla u}_2^\theta \Vab{\nabla u}_q^{1-\theta},
\end{equation*}
where 
\[
\frac{1}{r} = \frac{\theta}{2} + \frac{1-\theta}{q} \ \iff \ \theta = \frac{pq+Nq-pN}{Np} \cdot \frac{2}{q-2}. 
\]
Therefore, there exists $C'>0$ such that 
\[
C' \Vab{\nabla u}_q^{ 2 - 2/\theta } \leq \Vab{\nabla u}_2^2 \quad \text{for every $u \in S_1$ with $\Vab{u}_p = 1$}.
\]
Substituting this inequality in $J(u)$ gives 
\begin{equation}\label{bdd-J}
	J(u) = \Vab{\nabla u}_2^2 \Vab{\nabla u}_q^{ q \cdot \frac{p\delta_p - 2}{q(1+\delta_q) - p\delta_p} } 
	\geq C' \Vab{\nabla u}_q^{ 2 - \frac{2}{\theta} + q \cdot \frac{p\delta_p - 2}{q(1+\delta_q) - p\delta_p} } 
	\quad \text{for every $u \in S_1$ with $\Vab{u}_p = 1$}. 
\end{equation}
Since $p_2 < p < p_q < q^*$, a direct computation gives 
\[
2 - \frac{2}{\theta} + q \cdot \frac{p\delta_p-2}{q(1+\delta_q) - p\delta_p} 
= \frac{Npq(p-2)(q-2)}{ (2q + Nq - Np) (Nq + pq - Np) } > 0.
\]
Applying \eqref{bdd-J} for $u_n$ and recalling \eqref{ineq:inf-graunlq} yield $d(1) > 0$.

\smallskip 

\noindent
\textbf{Claim 2:}
\textsl{there exists a radial $u_1 \in S_1$ such that $J(u_1) = d(1)$.}

\smallskip

Let $(u_n)_{n \in \N}$ satisfy $\Vab{u_n}_2 = 1 = \Vab{u_n}_p$, $J(u_n) \to d(1) > 0$ 
and assume that $u_n(x) = u_n(\vab{x}) \geq 0$ is nonincreasing in $\vab{x}$. 
Similarly to Claim 1, we divide the proof into two cases.

In case (I), from the proof of Claim 1, $(\Vab{\nabla u_n}_2)_{n \in \N}$ is bounded, and furthermore 
notice that $\inf_{n \in \N} \Vab{\nabla u_n}_2 > 0$; 
otherwise, $\Vab{\nabla u_n}_2 \to 0$ gives a contradiction $1=\Vab{u_n}_p \to 0$ as in the proof of Claim 1. 
Since $(\Vab{\nabla u_n}_2)_{n\in \N}$ is bounded and stays away from $0$, 
$(\nabla u_n)_{n \in \N}$ is bounded in $L^2(\R^N) \cap L^q(\RN)$ in view of $J(u_n) \to d(1) > 0$. 
Taking a subsequence if necessary, we may assume $u_n \rightharpoonup u_\infty$ weakly in $H^1_\rad(\R^N)$ and $L^p(\RN)$, and 
$\nabla u_n \rightharpoonup \nabla u_\infty$ weakly in $L^q(\RN)$. 
When $N \geq 2$, since the embedding $H^1_\rad(\RN) \subset L^s(\RN)$ is compact for $s \in (2,2^*)$ (\cite[Theorem A.I']{BeLi83}), 
the interpolation inequality implies that 
$X_\rad \subset L^s(\RN)$ is compact for any $s \in (2,q^*)$. 
In particular, $\Vab{u_n}_p \to \Vab{u_\infty}_p$ and hence $\Vab{u_\infty}_p = 1$ and $u_\infty \not \equiv 0$. 
We claim that $\Vab{u_\infty}_2 = 1$. In fact, if $0<\Vab{u_\infty}_2 < 1$, then 
by setting $\tau_\infty \coloneq 1/ \Vab{u_\infty}_2 \in (1,\infty)$, 
it follows from \eqref{J:multi} and the weak continuity of norms that 
\[
0 < d(1) \leq J(\tau_\infty u_\infty) = \tau_\infty^{ - \frac{p(q-2)}{ q(1+\delta_q) - p\delta_p } } J(u_\infty) < J(u_\infty) 
\leq \liminf_{n \to \infty} J(u_n) = d(1),
\]
which is a contradiction. Therefore, $\Vab{u_\infty}_2 = 1$. Similarly to the above, 
we obtain $d(1) \leq J(u_\infty) \leq \liminf_{n \to \infty} J(u_n) = d(1)$ and $d(1) = J(u_\infty)$. 
Thus, $u_\infty$ is a desired radial minimizer.

On the other hand, when $N=1$, we may also suppose that $u_n \to u_\infty$ in $C_{\rm loc} (\R)$. 
Thus, $u_\infty(x) = u_\infty(\vab{x}) \geq 0$ and $u_\infty$ is nonincreasing in $[0,\infty)$. 
Let us prove $\Vab{u_\infty}_p = 1$. For any given $\e>0$, 
since $u_\infty(x) \to 0$ as $\vab{x} \to \infty$ by $u_\infty \in H^1(\R)$, 
there exists $R_\e > 0$ such that 
$ 0 \leq u_\infty(x) < \e$ holds for each $x \in \R$ with $\vab{x} \geq R_\e$. 
Since $u_n(\pm R_\e) \to u_\infty(R_\e)$ as $n \to \infty$ and $u_n$ is monotone, 
there exists $n_\e \in \N$ such that 
$0 \leq u_n(x) \leq \e$ for each $n \in \N$ and $x \in \R$ with $n \geq n_\e$ and $\vab{x} \geq R_\e$. 
This together with $u_\infty (x) \to 0$ as $\vab{x} \to \infty$ and $u_n \to u_\infty$ in $C_{\rm loc} (\R)$ leads to $\Vab{u_n - u_\infty}_\infty \to 0$. 
The interpolation inequality gives $\Vab{u_n - u_\infty}_p \to 0$. 
Thus, $\Vab{u_\infty}_p = 1$ holds. 
Then we may argue in a similar way to the case $N \geq 2$ and get a radial minimizer corresponding to $d(1)$.

In case (II), due to \eqref{bdd-L2-gra-un} and \eqref{bdd-J} with $u=u_n$, 
$(\nabla u_n)_{n \in \N}$ is bounded in $L^2(\RN) \cap L^q(\R^N)$ thanks to $J(u_n) \to d(1) > 0$. 
Then the rest of the argument is the same as case (I) and 
we can find a radial minimizer for $d(1)$.

From Claims 1 and 2, we conclude assertions (i) and (ii), and this completes the proof. 
\end{proof}

\begin{remark}\label{Rem:alp0m}
	From \eqref{alpha0} and \eqref{e:dmd1}, the dependence of $\alpha_0(m)$ on $m$ is expressed as follows: 
	\[
		\alpha_0(m) 
		=
	\Bab{
		\ab[ \frac{q  (p\delta_p - 2) }{ p \Bab{ q (1+\delta_q) -2 } } ]^{ - \frac{ p\delta_p - 2 }{q(1+\delta_q) - p\delta_p} } 
		\cdot 
		\frac{ p \Bab{ q(1+\delta_q) - 2 } }{q(1+\delta_q) - p\delta_p}
		\cdot 
		\frac{1}{2} \cdot d(1) }^{ \frac{ q(1+\delta_q) - p\delta_p }{q(1+\delta_q)-2} } 
		m^{ - \frac{ p(q-2) }{2\Bab{ q(1+\delta_q) - 2 }} }.
	\]
	Moreover, by denoting 
	\[
	\cG_J \coloneq \Set{ u \in S_1 | \Vab{u}_p = 1, \ J(u) = d(1)  },
	\]
	 when $\alpha = \alpha_0(m)$, by \eqref{def:bartheta} and \eqref{e:psi-gm}, 
	 the set $\scrG_0$ of all minimizers for $e_{\alpha_0(m)} (m)$ is given by 
	 \[
	 \scrG_0 = \Set{ \ab( \sqrt{m} u)_{\theta_{\alpha_0(m), \sqrt{m} u}} | u \in \cG_J  },
	 \]
	 where 
	 \[
	 \theta_{\alpha_0(m), \sqrt{m} u} \coloneq 
	 \ab[ \frac{ q\alpha (p\delta_p-2)  m^{(p-q)/2} }{ p \Bab{ q (1+\delta_q) -2  } \Vab{\nabla u}_q^q}  ]^{ \frac{ 1 }{ q(1+\delta_q) - p\delta_p } }.
	 \]
\end{remark}

Before going into the proof of \cref{theomin}, we proceed with intermediate results.

\begin{proposition}
Let $m>0$ and $\alpha > 0$. Then the subadditivity of $e_\alpha(m)$ holds: 
\begin{equation}\label{eq:subadd}
	e_\alpha(m) \leq e_\alpha \ab( \tau )  + e_\alpha \ab( m - \tau ) \quad \text{for all $\tau \in (0,m)$}. 
\end{equation}
\end{proposition}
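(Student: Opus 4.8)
The plan is to establish the subadditivity inequality \eqref{eq:subadd} by exploiting the scaling behavior already recorded in the proof of \cref{prop:min}. The key structural feature is the scaling identity \eqref{J:multi}, namely $J(\tau u) = \tau^{-p(q-2)/(q(1+\delta_q)-p\delta_p)} J(u)$, together with the relation \eqref{e:dmd1} that expresses $d(m)$ as a negative power of $m$ times $d(1)$. Since $e_\alpha(m)$ is governed by the sign of $\psi_{\alpha,u}(\bar\theta_{\alpha,u})$, which in turn is controlled by comparing $\alpha$ with $\alpha_0(m)$, and since $\alpha_0(m)$ is a strictly decreasing power of $m$, the monotonicity $\alpha_0(\tau) > \alpha_0(m)$ for $\tau < m$ is what ultimately drives the subadditivity.

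First I would reduce to the nontrivial regime. When $\alpha \le \alpha_0(m)$ we have $e_\alpha(m) = 0$ by \cref{prop:min}, and since $\alpha_0$ is decreasing in its argument we also get $\alpha \le \alpha_0(m) \le \alpha_0(\tau)$ and $\alpha \le \alpha_0(m-\tau)$ for every $\tau \in (0,m)$, whence $e_\alpha(\tau) = e_\alpha(m-\tau) = 0$ and \eqref{eq:subadd} holds as an equality $0 \le 0$. Thus the only case requiring work is $\alpha > \alpha_0(m)$, where $e_\alpha(m) < 0$.

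For this case I would establish the homogeneity of $e_\alpha$ in the mass variable. Using the dilation $u \mapsto \sqrt{s}\,u$ which carries $S_1$ onto $S_s$, together with \eqref{J:multi} and the explicit formula \eqref{e:psi-gm} for the minimized value $\psi_{\alpha,u}(\bar\theta_{\alpha,u})$, one checks that $E_\alpha$ minimized over $S_s$ scales as a fixed positive power of $s$; concretely, I expect to obtain $e_\alpha(s) = s^{\gamma}\, e_\alpha(1)$ for some exponent $\gamma > 1$ determined by the exponents in \eqref{e:psi-gm}, valid whenever $e_\alpha(1) < 0$. The strict superadditivity of the map $s \mapsto s^\gamma$ for $\gamma > 1$, namely $(\tau + (m-\tau))^\gamma > \tau^\gamma + (m-\tau)^\gamma$ when both summands are positive, then yields, after multiplying through by the negative number $e_\alpha(1)$, the strict inequality $e_\alpha(m) < e_\alpha(\tau) + e_\alpha(m-\tau)$, which is stronger than the claimed \eqref{eq:subadd}.

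The main obstacle I anticipate is verifying the exact scaling law for $e_\alpha$ and pinning down the exponent $\gamma$, since $e_\alpha(m)$ is an infimum rather than an explicitly attained value, and one must be careful that the optimal dilation parameter $\bar\theta_{\alpha,u}$ interacts correctly with the mass rescaling. Rather than carrying out this computation directly, the cleaner route may be the following soft argument, which avoids the exact exponent altogether: given $\varepsilon > 0$, choose near-optimal $u \in S_\tau$ and $v \in S_{m-\tau}$ for $e_\alpha(\tau)$ and $e_\alpha(m-\tau)$, translate them far apart so their supports are essentially disjoint, and glue them with a cutoff to form a test function $w$ with $\Vab{w}_2^2$ close to $m$; then $E_\alpha(w) \le E_\alpha(u) + E_\alpha(v) + o(1)$ by the near-additivity of the functional on well-separated profiles, and after rescaling $w$ slightly to land exactly on $S_m$ one obtains $e_\alpha(m) \le e_\alpha(\tau) + e_\alpha(m-\tau) + \varepsilon$. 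Letting $\varepsilon \to 0$ gives \eqref{eq:subadd}. The delicate point in this gluing approach is controlling the nonhomogeneous $q$-term $\Vab{\nabla w}_q^q$ under the cutoff, since the $q$-Dirichlet energy is only subadditive under disjoint supports up to errors that must be shown to vanish as the separation grows; here the decay of the near-minimizers and the coercivity from \cref{lemma:I}(ii) should suffice to make these error terms negligible.
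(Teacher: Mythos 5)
Your primary route---deriving an exact scaling law $e_\alpha(s) = s^{\gamma}\, e_\alpha(1)$ with $\gamma>1$ and concluding by superadditivity of $s\mapsto s^\gamma$---does not work for this operator. The functional contains two gradient terms, $\Vab{\nabla u}_2^2$ and $\Vab{\nabla u}_q^q$, which scale with \emph{different} powers under the mass change $u\mapsto \sqrt{s}\,u$ (powers $s$ and $s^{q/2}$) as well as under any dilation $u\mapsto \theta^{N/2}u(\theta\,\cdot)$; there is no transformation carrying $S_1$ onto $S_s$ under which all three terms of $E_\alpha$ are homogeneous of a common degree. This non-homogeneity is exactly why \cref{propem} in the paper establishes only the \emph{inequality} $e_\alpha(\theta^N m_1)\le \theta^N e_\alpha(m_1)$ (with strictness only under extra hypotheses), never an identity. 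So the exponent $\gamma$ you ``expect to obtain'' does not exist in general, and the strict inequality you deduce from it is unproven.

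Your fallback gluing argument is the right idea and is in essence the paper's proof, but as written it has a genuine gap: you control the cutoff errors by appealing to ``the decay of the near-minimizers,'' yet a near-minimizer for $e_\alpha(\tau)$ is just an element of $S_\tau$ whose energy is close to the infimum---it solves no equation and need have no decay at all. The correct repair, and what the paper does, is to remove the cutoffs entirely by an approximation step \emph{before} translating: by density of $C^\infty_c(\RN)$ in $X$ and continuity of $E_\alpha$ (after a harmless renormalization to stay on the sphere), it suffices to test $e_\alpha(m)$ with compactly supported $\varphi\in S_\tau$ and $\psi\in S_{m-\tau}$. For such functions, translating $\psi$ by $k\omega$ with $k$ large makes the supports \emph{exactly} disjoint, so $\Vab{\varphi+\psi(\cdot-k\omega)}_2^2=m$ exactly and every term of $E_\alpha$---including the $q$-term you were worried about---splits exactly, with no error term: $e_\alpha(m)\le E_\alpha(\varphi)+E_\alpha(\psi)$. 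Taking infima over $\varphi$ and $\psi$ yields \eqref{eq:subadd}. Note also that your preliminary case distinction between $\alpha\le\alpha_0(m)$ and $\alpha>\alpha_0(m)$ is then unnecessary: the disjoint-support argument proves the inequality for every $\alpha>0$ at once.
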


\begin{proof}
For any $\omega \in S^{N-1}$ and 
$\varphi , \psi \in C^\infty_c(\RN)$ with $\Vab{\varphi}_2^2 = \tau$ and $\Vab{\psi}_2^2 = m - \tau$, 
if $k$ is sufficiently large, then 
\[
\supp \varphi \cap \supp \psi ( \cdot - k\omega) = \emptyset
\]
and 
\[
e_\alpha (m) \leq E_\alpha \ab( \varphi + \psi ( \cdot - k\omega) ) = E_\alpha (\varphi) + E_\alpha(\psi).
\]
Recalling that $\varphi$ and $\psi$ are arbitrary, we get \eqref{eq:subadd} by the density of $C^\infty_c(\RN)$ in $X$. 
\end{proof}

Notice that by \eqref{eq:subadd} and $e_\alpha(\tau) \leq 0$ for any $\tau > 0$, then $e_\alpha(\cdot)$ is monotone, namely
\begin{equation}\label{mono}
	\text{$(0,\infty) \ni \tau \mapsto e_\alpha (\tau) \in \R$ is nonincreasing.}
\end{equation}

In the next proposition we list other properties of $e_\alpha(m)$ (cf. \cite[Lemma 3.5]{DiJiPu25} and \cite[Lemma 4.3 and Corollary 4.4]{YaQiZo22}).

\begin{proposition}\label{propem}
	Let $m_1 > 0$ be given. Then, 
	\begin{enumerate}[label={\rm (\roman*)}]
		\item 
		for each $\theta \geq 1$, $e_\alpha (\theta^N m_1) \leq \theta^N e_\alpha (m_1)$; 
		\item 
		if either $e_\alpha(m_1) < 0$ or $e_\alpha(m_1)$ is attained by $u_1 \in S_{m_1}$, 
		then for every $\theta > 1$, $e_\alpha (\theta^N  m_1) < \theta^N e_\alpha (m_1)$;
		\item 
		if $e_\alpha (m_1)$ is attained by $u_1 \in S_{m_1}$, then for every $m_2>0$, 
		$e_\alpha (m_1+m_2) < e_\alpha (m_1) + e_\alpha (m_2)$. 
	\end{enumerate}
\end{proposition}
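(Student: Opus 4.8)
The plan is to prove the three scaling properties of $e_\alpha(m)$ by exploiting the $L^2$-norm-preserving dilation $u_\theta(x) = \theta^{N/2} u(\theta x)$ introduced in the proof of \cref{prop:min}, together with the subadditivity \eqref{eq:subadd}. The key scaling relations in \eqref{sc} show that if $\Vab{u}_2^2 = m_1$, then $\Vab{u_\theta}_2^2 = m_1$ as well, so dilation keeps us on the same sphere, while $\Vab{\nabla u_\theta}_2^2 = \theta^2 \Vab{\nabla u}_2^2$, $\Vab{\nabla u_\theta}_q^q = \theta^{q\delta_q+q}\Vab{\nabla u}_q^q$ and $\Vab{u_\theta}_p^p = \theta^{p\delta_p}\Vab{u}_p^p$. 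For part (i), however, I want to change the mass rather than fix it, so instead I would use the pure spatial rescaling $v(x) \coloneq u_1(x/\theta)$, which satisfies $\Vab{v}_2^2 = \theta^N \Vab{u_1}_2^2 = \theta^N m_1$, placing $v$ on $S_{\theta^N m_1}$.

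For part (i), I would compute $E_\alpha(v)$ directly for $v(x) = u_1(x/\theta)$: each term scales by an explicit power of $\theta$, namely $\Vab{\nabla v}_2^2 = \theta^{N-2}\Vab{\nabla u_1}_2^2$, $\Vab{\nabla v}_q^q = \theta^{N-q}\Vab{\nabla u_1}_q^q$ and $\Vab{v}_p^p = \theta^N \Vab{u_1}_p^p$. Since $\theta \geq 1$ and the exponents $N-2$ and $N-q$ are both strictly less than $N$ (because $q>2$), each kinetic term is bounded above by $\theta^N$ times its original value, whereas the nonlinear term scales exactly by $\theta^N$; combining these with the sign of the nonlinear term in $E_\alpha$ yields $E_\alpha(v) \leq \theta^N E_\alpha(u_1)$. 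Taking the infimum over a minimizing sequence $(u_1)$ for $e_\alpha(m_1)$, or over all of $S_{m_1}$, gives $e_\alpha(\theta^N m_1) \leq \theta^N e_\alpha(m_1)$.

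For part (ii), the point is that the inequality in part (i) becomes strict whenever $\theta > 1$ and at least one kinetic term is nonzero. If $e_\alpha(m_1)$ is attained by $u_1 \in S_{m_1}$, then $u_1 \not\equiv 0$ so $\Vab{\nabla u_1}_2^2 > 0$, and the strict inequality $\theta^{N-2} < \theta^N$ forces $E_\alpha(v) < \theta^N E_\alpha(u_1) = \theta^N e_\alpha(m_1)$, hence $e_\alpha(\theta^N m_1) < \theta^N e_\alpha(m_1)$. If instead $e_\alpha(m_1) < 0$ (without assuming attainment), I would choose $u_1 \in S_{m_1}$ with $E_\alpha(u_1)$ sufficiently close to $e_\alpha(m_1)$ that $E_\alpha(u_1) < 0$; then again $\Vab{\nabla u_1}_2^2 > 0$ gives the strict term-by-term bound, and I would track the strict loss to conclude $e_\alpha(\theta^N m_1) \leq E_\alpha(v) < \theta^N E_\alpha(u_1)$, after which taking $E_\alpha(u_1) \to e_\alpha(m_1) < 0$ preserves strictness. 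The main delicacy here is handling the approximate (non-attained) case carefully, making sure the strict gap survives the limit; I expect to quantify the strict deficit explicitly in terms of $(\theta^N - \theta^{N-2})\Vab{\nabla u_1}_2^2$.

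For part (iii), I would combine subadditivity with the strict scaling of part (ii). Write $m_1 + m_2 = \theta^N m_1$ with $\theta \coloneq ((m_1+m_2)/m_1)^{1/N} > 1$. From subadditivity \eqref{eq:subadd} and monotonicity \eqref{mono}, together with the strict inequality $e_\alpha(\theta^N m_1) < \theta^N e_\alpha(m_1)$ from part (ii) (applicable since $e_\alpha(m_1)$ is attained), I would derive the strict subadditivity $e_\alpha(m_1+m_2) < e_\alpha(m_1) + e_\alpha(m_2)$; the standard argument writes $\theta^N e_\alpha(m_1) = e_\alpha(m_1) + (\theta^N-1)e_\alpha(m_1)$ and uses $(\theta^N - 1)e_\alpha(m_1) \leq e_\alpha((\theta^N-1)m_1) = e_\alpha(m_2)$ via part (i). The main obstacle I anticipate across the whole proof is keeping the direction of inequalities consistent in part (ii)'s non-attained subcase and correctly chaining the scaling estimate with subadditivity in part (iii), since the sign of $e_\alpha(m_1)$ matters for the final comparison.
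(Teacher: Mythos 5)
Your proofs of (i) and of the attained case of (ii) are correct, and your direct term-by-term comparison, with the explicit deficit
$\theta^N E_\alpha(u_1)-E_\alpha(v)=\frac{\theta^N-\theta^{N-2}}{2}\Vab{\nabla u_1}_2^2+\frac{\theta^N-\theta^{N-q}}{q}\Vab{\nabla u_1}_q^q$,
is a legitimate and slightly more elementary substitute for the paper's differential identity
$\theta^{-N}E_\alpha(u_\theta)-E_\alpha(u)=-\int_1^\theta \tau^{-N-1}K(u_\tau)\odif{\tau}$. However, there are two genuine gaps.

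First, in the non-attained case of (ii), the step ``taking $E_\alpha(u_1)\to e_\alpha(m_1)<0$ preserves strictness'' fails as written: strict inequalities do not survive limits, and your deficit $(\theta^N-\theta^{N-2})\Vab{\nabla u_1}_2^2/2$ could a priori tend to $0$ along the minimizing sequence, leaving only $e_\alpha(\theta^N m_1)\le\theta^N e_\alpha(m_1)$. The missing ingredient is a lower bound \emph{uniform along the sequence}: for a minimizing sequence $(u_n)$ with $E_\alpha(u_n)\to e_\alpha(m_1)<0$, \cref{lemma:I} gives boundedness in $X$ by (ii), and then (i) (or (iii)) shows that $K(u_n)\to0$ along a subsequence would force $\liminf_{n}E_\alpha(u_n)\ge0$, a contradiction; hence $\delta_0\coloneq\inf_n K(u_n)>0$. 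Then the deficit is at least $c_\theta\delta_0$ with $c_\theta\coloneq\min\set{(\theta^N-\theta^{N-2})/2,\,(\theta^N-\theta^{N-q})/q}>0$, and passing to the limit yields $e_\alpha(\theta^N m_1)\le\theta^N e_\alpha(m_1)-c_\theta\delta_0<\theta^N e_\alpha(m_1)$. This uniform bound is exactly the role played by $\inf_{n,\,1\le\tau\le\theta}K((u_n)_\tau)>0$ in the paper's proof; without it your argument is incomplete.

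Second, and more seriously, part (iii) uses part (i) in the wrong direction. Part (i) says $e_\alpha(s^N\mu)\le s^N e_\alpha(\mu)$ for $s\ge1$, i.e.\ $\mu\mapsto e_\alpha(\mu)/\mu$ is nonincreasing. Your claimed inequality $(\theta^N-1)e_\alpha(m_1)\le e_\alpha((\theta^N-1)m_1)=e_\alpha(m_2)$, i.e.\ $(m_2/m_1)\,e_\alpha(m_1)\le e_\alpha(m_2)$, follows from (i) only when $m_2\le m_1$ (scaling $m_2$ up to $m_1$). When $m_2>m_1$, (i) gives the \emph{reverse} inequality $e_\alpha(m_2)\le(m_2/m_1)\,e_\alpha(m_1)$; in fact, since $e_\alpha(m_1)$ is attained, part (ii) makes this reverse inequality strict, so your claimed step is false in that regime. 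The statement is not symmetric in $(m_1,m_2)$ (only $e_\alpha(m_1)$ is assumed attained), so you cannot simply relabel. The paper handles $m_1<m_2$ by a separate argument: first deduce $e_\alpha(m_2)<(m_2/m_1)\,e_\alpha(m_1)\le0$ from (ii) applied at $m_1$, then apply the strict inequality (ii) at the \emph{larger} mass $m_2$ (legitimate now because $e_\alpha(m_2)<0$), writing $m_1+m_2=\theta_1^N m_2$, and finally use (i) in its valid direction, $e_\alpha(m_2)=e_\alpha(\theta_2^N m_1)\le\theta_2^N e_\alpha(m_1)$ with $\theta_2^N=m_2/m_1$. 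You need to add this case distinction to complete (iii).
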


\begin{proof} 
For (i) and (ii), we follow the ideas contained in \cite[Lemma 3.5]{DiJiPu25}, 
and for (iii) we argue as in \cite[Corollary 4.4]{YaQiZo22}.

(i) Choose $u \in S_{m_1}$ arbitrary and consider $u_\theta (x) \coloneq u(x/\theta)$ for $\theta \geq 1$.
It is immediate to see that 
\[
\Vab{u_\theta}_2^2 = \theta^N m_1, \quad \Vab{\nabla u_\theta}_q^q = \theta^{N-q} \Vab{\nabla u}_q^q, \quad 
\Vab{u_\theta}_p^p = \theta^{N} \Vab{u}_p^p.
\]
This fact and recalling $K$ in \eqref{def-Smrho} imply
\[
\begin{aligned}
	\odv*{E_\alpha(u_\theta)}{\theta} 
	&= \theta^{-1} \Bab{ \frac{N-2}{2} \Vab{\nabla u_\theta}_2^2 + \frac{N-q}{q} \Vab{\nabla u_\theta}_q^q 
	- \frac{\alpha N}{p} \Vab{u_\theta}_p^p }
	=
	N \theta^{-1} E_\alpha(u_\theta) - \theta^{-1} K(u_\theta) . 
\end{aligned}
\]
By 
\[
\theta^{N}\odv*{ \ab( \theta^{-N} E_\alpha(u_\theta) ) }{\theta} 
= - N \theta^{-1} E_\alpha(u_\theta) + \odv*{E_\alpha(u_\theta)}{\theta},
\]
it follows that 
\[
\theta^{N}\odv*{ \ab( \theta^{-N} E_\alpha(u_\theta) ) }{\theta} = - \theta^{-1} K(u_\theta).
\]
Thus, for any $\theta \geq 1$, 
\[
\theta^{-N} E_\alpha (u_\theta) - E_\alpha (u) = -  \int_1^\theta \tau^{-N-1} K(u_\tau )  \odif{\tau}.
\]
Noting $u_\theta \in S_{\theta^N m_1}$ and $K(u_\tau) \geq 0$ gives 
\begin{equation}\label{eq_diff_theta^Nm_1m_1}
e_\alpha (\theta^N m_1) \leq  E_\alpha (u_\theta) = \theta^N E_\alpha (u) - \theta^N \int_1^\theta \tau^{-N-1} K(u_\tau)  \odif{\tau} \leq \theta^N E_\alpha (u).
\end{equation}
Since $u \in S_{m_1}$ is arbitrary, $e_\alpha (\theta^N m_1) \leq \theta^N e_\alpha(m_1)$ holds.

(ii) 
Let $\theta > 1$. 
When $e_\alpha(m_1) < 0$, choose $(u_n)_{n \in \N} \subset S_{m_1}$ so that $E_\alpha (u_n) \to e_\alpha(m_1) < 0$. 
Then $(u_n)_{n \in \N}$ is bounded in $X$ thanks to \cref{lemma:I} (ii). 
Since $e_\alpha(m_1) < 0$, \cref{lemma:I} leads to $\inf_{n \geq 1} K(u_n) > 0$; 
otherwise $\liminf_{n \to \infty} E_\alpha(u_n) \geq 0$ holds. 
Notice that $\inf_{ n \geq 1, 1 \leq \tau \leq \theta } K( (u_n)_\tau ) \eqcolon \delta_0 > 0$. 
From \eqref{eq_diff_theta^Nm_1m_1} it follows that 
\[
\begin{aligned}
	e_\alpha \ab( \theta^N m_1 ) 
	& \leq \theta^N E_\alpha(u_n) - \theta^N \int_1^\theta \tau^{-N-1} K \ab( (u_n)_\tau) \odif{\tau}
	\\
	& \leq \theta^N E_\alpha(u_n) - \theta^N \delta_0 \int_1^\theta \tau^{-N-1} \odif{\tau} 
	\to \theta^N e_\alpha(m_1) - \frac{\theta^N \delta_0}{N} \ab( 1 - \theta^{-N} ).
\end{aligned}
\]
Hence, $e_\alpha (\theta^N m_1) < \theta^N e_\alpha(m_1)$ holds.

On the other hand, when $e_\alpha(m_1)$ is attained by $u_1 \in S_{m_1}$, 
we may use the above argument with $u_n = u_1$ and obtain 
\[
e_\alpha(\theta^N m_1) \leq \theta^N E_\alpha(u_1) - \theta^N \int_1^\theta \tau^{-N-1} K\ab( (u_1)_\tau ) \odif{\tau} < \theta^N e_\alpha (m_1). 
\]
This proves (ii).

(iii) The case $m_1 = m_2$ can be reduced into assertion (ii) with $\theta^N = 2$. 
When $0 < m_2 < m_1$, by rewriting
\[
m_1+m_2 = \frac{m_1+m_2}{m_1} \cdot m_1 \eqcolon \theta^N_1 m_1, \quad 
m_1 = \frac{m_1}{m_2} \cdot m_2 \eqcolon \theta_2^N m_2
\]
and by noting $\theta_1^N-1 = m_2/m_1 = \theta_2^{-N}$, assertions (ii) and (i) imply 
\[
e_\alpha (m_1+m_2) < \theta_1^N e_\alpha (m_1) = e_\alpha (m_1) + \theta_2^{-N} e_\alpha ( \theta_2^N m_2 ) 
\leq e_\alpha (m_1) + e_\alpha (m_2).
\]
On the other hand, if $m_1 < m_2$, then since $e_\alpha(m_1)$ is attained by $u_1 \in S_{m_1}$ and $m_1 < m_2$, 
assertion (ii) leads to $e_\alpha(m_2) < e_\alpha(m_1) \leq 0$. 
Since $e_\alpha(m_2) < 0$, assertion (ii) gives $e_\alpha (\theta^N m_2) < \theta^N e_\alpha (m_2)$ for each $\theta > 1$. 
By writing $m_1 + m_2 = \frac{m_1+m_2}{m_2} \cdot m_2 =: \theta_1^N m_2$ and $m_2 = \frac{m_2}{m_1} \cdot m_1 = : \theta_2^N m_1$, 
and noting $\theta_1,\theta_2>1$ and $\theta_1^N -1 = m_1/m_2 = \theta_2^{-N}$, assertions (ii) and (i) yield 
\[
e_\alpha(m_1+m_2) < \theta_1^N e_\alpha (m_2) = e_\alpha(m_2) + \theta_2^{-N} e_\alpha (\theta_2^N m_2) \leq e_\alpha(m_1) + e_\alpha(m_2). 
\]
Thus, the desired inequality is obtained. 
\end{proof}

\begin{proposition}\label{p:min-sign-Lag}
	Let $N \geq 1 $, $q>2$, $p_2 < p < p_q $, $m>0$ and $\alpha \geq \alpha_0(m)$. 	
	Then any minimizer corresponding to $e_\alpha(m)$ is either positive or negative in $\RN$. 
	Furthermore, any associated Lagrange multiplier to any minimizer is positive. 
\end{proposition}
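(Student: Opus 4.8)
The plan is to prove the two assertions in reverse order: first the strict positivity of the Lagrange multiplier $\lambda$, and then use it to deduce the constant sign. Fix a minimizer $u \in S_m$ of $e_\alpha(m)$; since $\alpha \ge \alpha_0(m)$ we have $e_\alpha(m) \le 0$ by \cref{prop:min}, and by \cref{l:reg} $u$ is a (classical) solution of \eqref{eq_main} with multiplier $\lambda$. Writing $\delta_s \coloneq \frac{N}{2s}(s-2)$ as in \cref{prop:min}, I record three scalar relations: (a) testing \eqref{eq_main} against $u$, $\Vab{\nabla u}_2^2 + \Vab{\nabla u}_q^q + \lambda m = \alpha \Vab{u}_p^p$; (b) the Pohozaev identity of \cref{App}, which together with (a) is equivalent to the vanishing of $Q_\alpha(u) = \Vab{\nabla u}_2^2 + (1+\delta_q)\Vab{\nabla u}_q^q - \alpha\delta_p \Vab{u}_p^p$ (alternatively $Q_\alpha(u)=0$ comes straight from minimality by differentiating $\theta \mapsto E_\alpha(u_\theta)$ at $\theta = 1$, since $u_\theta \in S_m$); and (c) the energy value $E_\alpha(u) = e_\alpha(m) \le 0$. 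Eliminating $\alpha\Vab{u}_p^p$ between (a) and $Q_\alpha(u)=0$ produces the exact identity
\[
\delta_p\, \lambda m = (1-\delta_p)\Vab{\nabla u}_2^2 + (1+\delta_q - \delta_p)\Vab{\nabla u}_q^q .
\]

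A short check using $p < p_q < q^*$ shows $1 + \delta_q - \delta_p > 0$, and since $u \not\equiv 0$ forces $\Vab{\nabla u}_q > 0$, the last term is strictly positive. When $p \le 2^*$ one has $1 - \delta_p \ge 0$, so the right-hand side is positive and $\lambda > 0$ immediately. The delicate case is $p > 2^*$ (possible within \eqref{middle} only for $N \ge 5$), where $1 - \delta_p < 0$; this is exactly the obstruction flagged in the discussion after \cref{theorem:local}, where the Pohozaev identity alone does not settle the sign of $\lambda$. To close it I bring in the energy: substituting $\alpha \Vab{u}_p^p$ from $Q_\alpha(u)=0$ into $E_\alpha(u)$ gives $E_\alpha(u) = c_A \Vab{\nabla u}_2^2 + c_B \Vab{\nabla u}_q^q$ with $c_A = \frac{N(p-2)-4}{2N(p-2)} > 0$ (because $p > p_2$) and $c_B = \frac{Np - q(N+2)}{qN(p-2)} < 0$ (because $p < p_q$). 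Hence $E_\alpha(u) \le 0$ yields the a priori bound $\Vab{\nabla u}_2^2 \le \kappa \Vab{\nabla u}_q^q$, where $\kappa \coloneq -c_B/c_A > 0$.

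Inserting this bound into the displayed identity (the factor $1 - \delta_p$ is now negative, so the inequality reverses) reduces $\lambda > 0$ to the scalar inequality $(1-\delta_p)\kappa + (1 + \delta_q - \delta_p) > 0$. After clearing denominators this is exactly
\[
(pq + Nq - Np)(Np - 2N - 4) > (Np - 2N - 2p)(Nq + 2q - Np),
\]
where all four factors are positive precisely in the regime $p_2 < p < p_q$ with $p > 2^*$. The crux — and the reason the argument works at all — is that the difference of the two sides factors as $Np(p-2)(q-2)$, which is manifestly positive. This gives $\lambda > 0$ in every case.

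For the sign, note that $|\nabla |u|| = |\nabla u|$ a.e., so $|u| \in S_m$ with $E_\alpha(|u|) = E_\alpha(u) = e_\alpha(m)$; thus $|u|$ is itself a minimizer, and by the argument above its multiplier $\bar\lambda$ is strictly positive. Then $v \coloneq |u| \ge 0$ is a nontrivial solution of $-\Delta v - \Delta_q v + \bar\lambda v = \alpha v^{p-1} \ge 0$. Because the $-\Delta$ term makes the operator (locally) uniformly elliptic along the $C^1$ function $v$, the strong maximum principle of \cref{App} applies with the favorable zeroth-order sign $\bar\lambda > 0$ and gives $v > 0$ on all of $\R^N$; hence $u$ never vanishes and, being continuous, has constant sign on the connected set $\R^N$. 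The single genuine difficulty throughout is the supercritical range $p > 2^*$: neither Nehari-plus-Pohozaev nor the energy bound controls $\lambda$ on its own, and only their combination, vindicated by the factorization $Np(p-2)(q-2)$, yields positivity; a minor technical point is the justification of the strong maximum principle for the degenerate $(2,q)$-operator, which I handle via the nondegeneracy inherited from the Laplacian.
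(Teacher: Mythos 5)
Your proposal is correct, and the constant-sign part is essentially the paper's argument ($|u|$ is again a minimizer, hence a $C^2$ solution by \cref{l:reg}, rewrite the operator in non-divergence form with continuous, locally uniformly elliptic coefficients, apply the strong maximum principle). For the Lagrange multiplier, however, you take a noticeably more laborious route than necessary. The paper simply rewrites the Pohozaev identity as
\[
0 \;=\; E_\alpha(u) - \tfrac{1}{N}K(u) + \tfrac{\lambda}{2}\,\|u\|_2^2
\;=\; e_\alpha(m) - \tfrac{1}{N}K(u) + \tfrac{\lambda}{2}\,m,
\]
so that $\tfrac{\lambda}{2}m = \tfrac1N K(u) - e_\alpha(m) \ge \tfrac1N K(u) > 0$: one line, no Nehari identity, no case distinction in $p$. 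By instead eliminating $\alpha\|u\|_p^p$ between the Nehari identity and $Q_\alpha(u)=0$, you discard the term whose sign is controlled by $e_\alpha(m)\le 0$, and you are forced to reinstate the energy through the bound $\|\nabla u\|_2^2 \le \kappa \|\nabla u\|_q^q$ in the regime $p>2^*$ and to verify the inequality whose two sides differ by $Np(p-2)(q-2)$. I checked that identity and your coefficients $c_A$, $c_B$, $\kappa$; they are all correct, so your argument does close — but the factorization ``miracle'' is exactly the paper's one-line identity in disguise, and the apparent obstruction at $p>2^*$ is an artifact of your choice of elimination, not a genuine feature of the global minimization problem (it is only for \emph{local} minimizers, where the energy can be positive, that the paper needs the extra care of \cref{l:pos-Lag}). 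One factual slip, harmless to the proof since you treat the case in full generality: your parenthetical claim that $p>2^*$ is compatible with \eqref{middle} only for $N\ge 5$ is false — e.g.\ $N=3$, $q=4$ gives $p_q = 20/3 > 6 = 2^*$, so $p\in(6,20/3)$ is admissible.
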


\begin{proof}
	Let $u$ be any minimizer corresponding to $e_\alpha(m)$. 
	Since $E_\alpha(\vab{u}) = E_\alpha(u)$ and $\vab{u} \in S_m$, 
	$\vab{u}$ is also a minimizer. Notice that there exists $\lambda \in \R$ such that 
	$(\lambda, \vab{u} )$ is a solution of \eqref{eq_main}. 
	By \cref{l:reg}, $\vab{u} \in C^2(\RN)$. 
	Therefore, $\vab{u}$ is a solution of 
	\[
	- \sum_{i,j=1}^N b_{\vab{u}, ij} (x)  \frac{\partial^2 v}{\partial x_i \partial x_j} = - \lambda v + \alpha \vab{v}^{p-2} v \quad \text{in} \ \RN,
	\]
	where
	\[
	b_{\vab{u} , ij} (x) \coloneq \ab( 1 + \vab{ \nabla \vab{u} (x) }^{q-2}  ) \delta_{ij} 
	+ (q-2) \vab{ \nabla \vab{u} (x) }^{q-2} \frac{\partial_i \vab{u}(x)}{\vab{\nabla \vab{u}(x)}} 
	\cdot \frac{\partial_j \vab{u} (x)}{ \vab{\nabla \vab{u} (x)} } \in C(\RN). 
	\]
	Since $\vab{u} \not \equiv 0$, the strong maximum principle implies $\vab{u} > 0$ in $\RN$. 
	Therefore, $u$ does not change its sign and either $u>0$ or $u<0$ in $\RN$.

	About the positivity of Lagrange multipliers, 
	according to \cref{lem_Poho}, $u$ satisfies the Pohozaev identity: 
	\[
	0 = \frac{N-2}{2N} \Vab{\nabla u}_2^2 + \frac{N-q}{Nq} \Vab{\nabla u}_q^q + \frac{\lambda}{2} \Vab{u}_2^2 - \frac{\alpha}{p} \Vab{u}_p^p
	= E_\alpha(u) - \frac{1}{N} K(u) + \frac{\lambda}{2} \Vab{u}_2^2.
	\]
	Since $u \in S_m$ and $E_\alpha (u) = e_\alpha(m) \leq0$, the Pohozaev identity is rewritten as 
	\[
	0 = e_\alpha (m) - \frac{1}{N} K(u)  + \frac{\lambda}{2} m. 
	\]
	By $u \not \equiv 0$, $K(u) > 0$ and $\lambda > 0$ hold, which completes the proof. 
\end{proof}

\begin{proof}[Proof of \cref{theomin}]
	Notice that \cref{theomin} (ii) and (iii) follow from \cref{prop:min}. 
	Furthermore, the sign property of minimizers and the associated Lagrange multipliers hold by \cref{p:min-sign-Lag}. 
	If $u_0 \in S_m$ is a minimizer, then as in the proof of \cref{prop:min}, the symmetric decreasing rearrangement of $u_0$ is a radial minimizer 
	corresponding to $e_\alpha(m)$. 
	Therefore, it is enough to prove assertion (i).

	Recall $e_\alpha(m) < 0$ by \cref{prop:min} and $\alpha > \alpha_0(m)$. 
	Let $(u_n)_{n \in \N} \subset S_m$ satisfy $E_\alpha (u_n) \to e_\alpha (m) < 0$. 
	From \cref{lemma:I} (ii), $(u_n)_{n \in \N}$ is bounded in $X$. 
	
	\smallskip 
	
	\noindent
	\textbf{Claim 1:}
	\textsl{there exists $(x_n)_{n \in \N} \subset \RN$ such that }
	\begin{equation}\label{eq:nonva}
		\liminf_{n \to \infty} \Vab{u_n}_{L^2(B_1(x_n))} > 0.
	\end{equation}
	
	\smallskip 
	
	If \eqref{eq:nonva} does not hold, then $\sup_{z \in \RN} \Vab{u_n}_{L^2(B_1(z))} \to 0$ as $n \to \infty$ 
	and Lions' lemma  \cite{Lions84II} with the boundedness of $(u_n)_{n \in \N}$ in $X$ yields 
	$\Vab{u_n}_r \to 0$ as $n \to \infty$ for each $r \in (2,2^*)$. 
	Thus, \cref{lemma:I} (i) asserts $\Vab{u_n}_p \to 0$ and 
	\[
	0> e_\alpha (m) = \liminf_{n \to \infty} E_\alpha(u_n) \geq 0,
	\]
	which is a contradiction. Hence, \eqref{eq:nonva} holds.

	Since $E_\alpha$ is translation invariant, 
	for the relative compactness of $(u_n)_{n \in \N}$ up to translations, 
	we may assume that $(u_n)_{n \in \N}$ fulfills \eqref{eq:nonva} with $x_n = 0$. 
	Let $u_n \rightharpoonup u_\infty$ weakly in $X$. 
	From \eqref{eq:nonva} with $x_n=0$, $u_\infty \not \equiv 0$ and $0 < \Vab{u_\infty}_2^2 \leq m$.
	
	\smallskip 
	
	\noindent
	\textbf{Claim 2:}
	\textsl{prove $\Vab{u_\infty}_2^2 = m$.}
	
	\smallskip 
	
	To this end, suppose that $ 0 < \Vab{u_\infty}_2^2 < m$. 
	By taking a subsequence if necessary (still denoted by $(u_n)_{n \in \N}$), 
	we may suppose $u_n \to u_\infty$ in $L^2_{\rm loc} (\RN)$. 
	Furthermore, there exists $(R_n)_{n \in \N}$ such that
	\begin{equation}\label{eq:R_n}
		R_n \to \infty, \quad 
		\Vab{u_n - u_\infty}_{L^2(B_{3R_n})} \to 0, \quad 
		\Vab{u_n - u_\infty}_{L^p(B_{3R_n})} \to 0.
	\end{equation}
	In particular, the boundedness of $(u_n)_{n \in \N}$ in $X$ and \eqref{embeddings} lead to 
	\begin{equation}\label{e-vani-3Rn-Rn}
		\Vab{u_n}_{L^r \ab( B_{3R_n} \setminus B_{R_n} )} \to 0 \quad \text{for any $r \in [2,q^*)$}. 
	\end{equation}
	Select $(\varphi_n)_{n \in \N}, (\psi_n)_{n \in \N} \subset  C^\infty(\RN)$ such that 
	\begin{equation}\label{eq:phi_npsi_n}
		\begin{dcases}
			0 \leq \varphi_n \leq 1, \quad \varphi_n \equiv 1 \quad \text{on} \ B_{R_n}, \quad \varphi_n \equiv 0 \quad \text{on} \ \RN \setminus B_{2R_n}, \quad 
			\Vab{\nabla \varphi_n}_\infty \to 0,\\
			0 \leq \psi_n \leq 1, \quad \psi_n \equiv 0 \quad \text{on} \ B_{2R_n}, \quad \psi_n \equiv 1 \quad \text{on} \ \RN \setminus B_{3R_n}, \quad 
			\Vab{\nabla \psi_n}_\infty \to 0,
		\end{dcases}
	\end{equation}
	and set 
	\[ v_n \coloneq \varphi_n u_n, \quad w_n \coloneq \psi_n u_n.
	\]
	It is easily seen that 
	\begin{equation}\label{eq:convv_n}
		\Vab{v_n}_2^2 \to \Vab{u_\infty}_2^2 \in (0,m), \quad \Vab{w_n}_2^2 \to m - \Vab{u_\infty}_2^2 \in (0,m), 
		\quad \Vab{v_n - u_\infty}_p \to 0.
	\end{equation}
	Moreover, since there is $C>0$ such that 
	\[
	\vab{\xi + \eta}^q \leq \vab{\xi}^q + 2 \vab{\eta}^q + C \ab( \vab{\xi}^{q-1} \vab{\eta} + \vab{\xi} \vab{\eta}^{q-1} ) 
	\quad \text{for every $\xi,\eta \in \RN$},
	\]
	we see from \eqref{e-vani-3Rn-Rn} and \eqref{eq:phi_npsi_n} that 
	\[
	\begin{aligned}
		\int_{\RN} \vab{\nabla v_n}^q \odif{x} 
		&= 
		\int_{\RN} \vab{ \varphi_n \nabla u_n + u_n \nabla \varphi_n }^q \odif{x} 
		\\
		&\leq 
		\int_{\RN} \varphi_n^q \vab{\nabla u_n}^q \odif{x} + 2 \int_{\RN} \vab{\nabla \varphi_n}^q \vab{u_n}^q \odif{x} 
		\\
		&\quad 
		+C \int_{\RN} \varphi_n^{q-1} \vab{\nabla u_n}^{q-1} \vab{u_n} \vab{\nabla \varphi_n} + \varphi_n \vab{\nabla u_n} \vab{u_n}^{q-1} \vab{\nabla \varphi_n}^{q-1}  \odif{x}
		\\
		&= \int_{B_{2R_n}} \vab{\nabla u_n}^q \odif{x} + o_n(1). 
	\end{aligned}
	\]
	In a similar way, we may prove 
	\[
	\int_{\RN} \vab{\nabla w_n}^q \odif{x} = \int_{\RN} \vab{ \psi_n \nabla u_n + u_n \nabla \psi_n }^q \odif{x} 
	\leq \int_{\RN \setminus B_{2R_n}} \vab{\nabla u_n}^q \odif{x} + o_n(1). 
	\]
	Therefore, 
	\[
	\Vab{\nabla u_n}_q^q \geq \Vab{\nabla v_n}_q^q + \Vab{\nabla w_n}_q^q - o_n(1). 
	\]

	On the other hand, it is easily seen from \eqref{eq:R_n} and \eqref{eq:phi_npsi_n} that 
	\[
	\Vab{\nabla u_n}_2^2 \geq \Vab{\nabla v_n}_2^2 + \Vab{\nabla w_n}_2^2 - o_n(1), \quad 
	\Vab{u_n}_p^p = \Vab{v_n}_p^p + \Vab{w_n}_p^p + o_n(1).
	\]
	Thus, 
	\begin{equation}\label{eq:split}
		e_\alpha(m) = \lim_{n \to \infty} E_\alpha(u_n) \geq \liminf_{n \to \infty} E_\alpha(v_n) + \liminf_{n \to \infty} E_\alpha(w_n). 
	\end{equation}
	By \eqref{eq:convv_n} and the weak lower semicontinuity of norms, we also deduce that 
	\begin{equation}\label{eq:lowv_n}
		v_n \rightharpoonup u_\infty \quad \text{weakly in $X$}, \quad 
		\liminf_{n \to \infty} E_\alpha(v_n) \geq E_\alpha(u_\infty).
	\end{equation}
	Since $E_\alpha' \colon X \to X^*$ is a bounded map, 
	the facts that $0 < \Vab{u_\infty}_2^2 < m$, $\sqrt{m - \Vab{u_\infty}_2^2} / \Vab{w_n}_2^2 \to 1$ and $(w_n)_{n \in \N}$ is bounded in $X$ 
	show that 
	\[
	\liminf_{n \to \infty} E_\alpha(w_n) = \liminf_{n \to \infty} E_\alpha \ab( \frac{ \sqrt{m-\Vab{u_\infty}_2^2} }{\Vab{w_n}_2} w_n  ) 
	\geq 
	e_\alpha \ab( m - \Vab{u_\infty}_2^2 ).
	\]
	It follows from \eqref{eq:split}, \eqref{eq:lowv_n} and \eqref{eq:subadd} that 
	\[
	\begin{aligned}
		e_\alpha(m)= \lim_{n \to \infty} E_\alpha(u_n) 
		&\geq 
		\liminf_{n \to \infty} E_\alpha(v_n) + \liminf_{n \to \infty} E_\alpha(w_n)
		\\
		& \geq E_\alpha(u_\infty) + e_\alpha \ab( m - \Vab{u_\infty}_2^2 )
		\\
		&\geq 
		e_\alpha \ab( \Vab{u_\infty}_2^2 ) + e_\alpha \ab( m - \Vab{u_\infty}_2^2 ) \geq e_\alpha(m),
	\end{aligned}
	\]
	which implies that 
	\[ e_\alpha(m) = e_\alpha \ab( \Vab{u_\infty}_2^2 ) + e_\alpha \ab( m - \Vab{u_\infty}_2^2 ), 
	\quad e_\alpha\ab(\Vab{u_\infty}_2^2) = E_\alpha(u_\infty) =\liminf_{n \to \infty} E_\alpha(v_n).
	\]
	Since $e_\alpha(\Vab{u_\infty}_2^2)$ is achieved by $u_\infty$, 
	we use \cref{propem} (iii) with $m_1 = \Vab{u_\infty}_2^2$ and $m_2 = m - \Vab{u_\infty}_2^2$ to obtain the following contradiction: 
	\[
	e_\alpha \ab( \Vab{u_\infty}_2^2 ) + e_\alpha \ab( m - \Vab{u_\infty}_2^2 ) = e_\alpha(m) = e_\alpha (m_1+m_2) 
	< 
	e_\alpha \ab( \Vab{u_\infty}_2^2 ) + e_\alpha \ab( m - \Vab{u_\infty}_2^2 ). 
	\]
	Therefore, the case $0<\Vab{u_\infty}_2^2 < m$ does not happen, concluding Claim 2. 
	
	\smallskip 
	
	\noindent
	\textbf{Claim 3:}
	\textsl{Conclusion.}
	
	\smallskip 
	
	Since $u_n \rightharpoonup u_\infty$ weakly in $L^2(\RN)$, the fact $\Vab{u_\infty}_2^2= m$ yields $u_n \to u_\infty$ strongly in $L^2(\RN)$, 
	and $u_n \to u_\infty$ strongly in $L^p(\RN)$. From 
	\[
	e_\alpha (m) = \lim_{n \to \infty} E_\alpha (u_n) 
	\geq 
	\frac{1}{2} \liminf_{n \to \infty} \Vab{\nabla u_n}_2^2 + \frac{1}{q} \liminf_{n \to \infty} \Vab{\nabla u_n}_q^q - \frac{\alpha}{p} \Vab{u_\infty}^p 
	\geq E_\alpha(u_\infty) \geq e_\alpha (m),
	\]
	it follows that $\Vab{\nabla u_n}_2^2 \to \Vab{\nabla u_\infty}_2^2$, $\Vab{\nabla u_n}_q^q \to \Vab{\nabla u_\infty}_q^q$ and $E_\alpha(u_\infty) = e_\alpha(m)$. 
	Therefore, $\Vab{u_n - u_\infty}_X \to 0$ and $u_\infty$ is a minimizer, which completes the proof. 
\end{proof}

\section{Local minimizers with positive energy}\label{Secloc}

Throughout this section, let $\alpha_0 = \alpha_0 (m) > 0$ be the number in \cref{prop:min}. 
The first step consists of proving the following

\begin{lemma}\label{lemma:continuity}
  Let $N \geq 1 $, $q>2$, $p_2 < p < p_q$, $m>0$ and $\alpha \in (0,\alpha_0]$. 
  For any $m' \in (0,m]$, we also consider 
  \[
  \ov{e}_\alpha(m') \coloneq \inf \Set{ E_\alpha(u) | u \in S^{\rho_0}_{m'} },
  \]
  where $\rho_0 = \rho_0(m,\alpha_0) > 0$ is the number in \cref{lemma:I} (iii) and $S^{\rho_0}_{m'}$ is defined in \eqref{def-Smrho}. 
  Then the function $(0,m] \ni m' \mapsto \overline{e}_\alpha(m')$ is continuous and nonincreasing.
\end{lemma}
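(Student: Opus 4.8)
The plan is to prove the two assertions separately, first establishing that $\overline{e}_\alpha$ is nonincreasing by a cut-and-paste construction, and then deducing continuity from this monotonicity together with a two-sided rescaling argument. Throughout I will use that near-minimizers are uniformly bounded in $X$: by \cref{lemma:I}(ii) the functional $E_\alpha$ is coercive on $\{u : \Vab{u}_2^2 \le m\}$, and since $\overline{e}_\alpha$ is bounded on any compact subinterval of $(0,m]$ (once monotonicity is known), any near-minimizing family for the relevant masses is bounded in $X$.

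\emph{Monotonicity.} Fix $0 < m_1 < m_2 \le m$ and $\varepsilon > 0$, and choose $u \in S^{\rho_0}_{m_1}$ with $E_\alpha(u) \le \overline{e}_\alpha(m_1) + \varepsilon$. By density of $C^\infty_c(\RN)$ in $X$, together with the continuity of $K$ and $E_\alpha$ and a harmless $L^2$-normalization, I may assume $u$ has compact support while keeping $K(u) > \rho_0/2$ and $E_\alpha(u) \le \overline{e}_\alpha(m_1) + \varepsilon$. Next I pick $\phi \in C^\infty_c(\RN)$ with $\Vab{\phi}_2^2 = m_2 - m_1$; arguing exactly as in the proof that $E_\alpha(u_\theta)\to 0$ in \cref{prop:min} (via the dilation $\phi_\theta(x)=\theta^{N/2}\phi(\theta x)$ with $\theta\to 0^+$), I may take $E_\alpha(\phi) \le \varepsilon$. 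Translating $\phi$ far away along some $\omega \in S^{N-1}$, as in the proof of \eqref{eq:subadd}, so that $\supp \phi(\cdot - k\omega)$ is disjoint from $\supp u$, the competitor $v \coloneq u + \phi(\cdot - k\omega)$ satisfies $\Vab{v}_2^2 = m_2$, and by disjointness of supports $K(v) = K(u) + K(\phi) > \rho_0/2$ and $E_\alpha(v) = E_\alpha(u) + E_\alpha(\phi)$ \emph{exactly} (the $q$-gradient term being additive because only one gradient is nonzero at each point). Hence $v \in S^{\rho_0}_{m_2}$ and $\overline{e}_\alpha(m_2) \le \overline{e}_\alpha(m_1) + 2\varepsilon$; letting $\varepsilon \to 0$ gives the nonincreasing property.

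\emph{Continuity, the easy direction.} Being monotone, $\overline{e}_\alpha$ admits one-sided limits at each $m_0 \in (0,m]$, and it suffices to exclude jumps. The main tool is the multiplicative scaling $u \mapsto c\,u$ with $c = \sqrt{m'/m_0}$, which maps $S_{m_0}$ into $S_{m'}$, perturbs every norm continuously in $c$ (so that $E_\alpha$ and $K$ change by $o(1)$ as $m' \to m_0$, uniformly on $X$-bounded sets), and multiplies $K$ by a factor at least $\min\{c^2, c^q\}$. For the left limit I scale a \emph{fixed} near-minimizer $u$ of $\overline{e}_\alpha(m_0)$ down to a mass $m' < m_0$: since $K(u) > \rho_0/2$ strictly and $c \to 1$, one has $K(cu) > \rho_0/2$ for $m'$ close to $m_0$, so $cu \in S^{\rho_0}_{m'}$ and $\overline{e}_\alpha(m') \le E_\alpha(cu) = E_\alpha(u) + o(1) \le \overline{e}_\alpha(m_0) + \varepsilon + o(1)$. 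Combined with $\overline{e}_\alpha(m') \ge \overline{e}_\alpha(m_0)$ from monotonicity, this yields left continuity after sending first $m' \uparrow m_0$ and then $\varepsilon \to 0$.

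\emph{The delicate direction, and the main obstacle.} The genuine difficulty is the right limit, where I must scale a minimizing \emph{sequence} $u_n \in S^{\rho_0}_{m_n}$ (with $m_n \downarrow m_0$) down to mass $m_0$: now the constraint margin $K(u_n) - \rho_0/2$ may degenerate, so $v_n \coloneq \sqrt{m_0/m_n}\,u_n$ need not satisfy $K(v_n) > \rho_0/2$. This is the point requiring care. Since $(u_n)$ is bounded in $X$ and $\sqrt{m_0/m_n} \to 1$, one has $K(v_n) = K(u_n) + o(1) \ge \rho_0/2 - o(1)$, so any deficit is $o(1)$; in particular, whenever $K(v_n) \le \rho_0/2$ one necessarily has $K(v_n) \to \rho_0/2$. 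To restore the constraint I apply the \emph{mass-preserving} dilation $(v_n)_\tau(x) = \tau^{N/2} v_n(\tau x)$, under which, by \eqref{sc}, $\Vab{v_n}_2^2$ is unchanged while $K\big((v_n)_\tau\big)$ is continuous and strictly increasing in $\tau$ (the exponents $2$ and $q\delta_q + q$ being positive). Choosing $\tau_n \ge 1$ with $\tau_n \to 1$ so that $K\big((v_n)_{\tau_n}\big)$ sits just above $\rho_0/2$ places $(v_n)_{\tau_n} \in S^{\rho_0}_{m_0}$, with $E_\alpha\big((v_n)_{\tau_n}\big) = E_\alpha(v_n) + o(1) = E_\alpha(u_n) + o(1)$. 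Therefore $\overline{e}_\alpha(m_0) \le E_\alpha(u_n) + o(1) \le \overline{e}_\alpha(m_n) + o(1)$, giving $\liminf_{m' \downarrow m_0} \overline{e}_\alpha(m') \ge \overline{e}_\alpha(m_0)$; together with monotonicity this closes the right-hand side and completes the proof. The only step demanding vigilance is the uniform control of all the $o(1)$ terms, which rests on the uniform $X$-boundedness of near-minimizers for $m' \le m$ noted at the outset.
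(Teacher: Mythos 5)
Your proof is correct, but it is organized differently from the paper's. The paper proves continuity \emph{first} and independently of monotonicity, by a sequential two-sided argument: the upper bound $\limsup_k \overline{e}_\alpha(m_k) \leq \overline{e}_\alpha(m')$ comes from the multiplicative rescaling $\sqrt{m_k/m'}\,u$ of a fixed competitor (your left-continuity step is the same device), while the lower bound rescales near-minimizers $v_k \in S^{\rho_0}_{m_k}$ via $\tilde v_k = t_k^{(2-N)/2} v_k(\cdot/t_k)$, $t_k=\sqrt{m'/m_k}$, a scaling chosen because it preserves $\Vab{\nabla \cdot}_2^2$ exactly and changes $\Vab{\nabla\cdot}_q^q$ and $\Vab{\cdot}_p^p$ only by factors $t_k^{\beta}\to 1$, the error being controlled by the $X$-boundedness of $(v_k)$ exactly as in your argument. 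Monotonicity is then proved afterwards by the same disjoint-support construction you use (the paper truncates with a cutoff $\chi(\delta x)$ and places an annulus-supported bump shrunk by the dilation $\mu \ast \tilde v$, rather than invoking density of $C^\infty_c(\RN)$ and a translation, but the mechanism — additivity of $E_\alpha$ and $K$ over disjoint supports and a bump of mass $m_2-m_1$ with energy $O(\varepsilon)$ — is identical). Your route buys two things: (a) once monotonicity is in hand, continuity reduces to excluding jumps in one-sided limits, which is conceptually cleaner; (b) your explicit two-step repair in the delicate direction (multiplicative mass scaling followed by the mass-preserving dilation $(v_n)_\tau(x)=\tau^{N/2}v_n(\tau x)$ of \eqref{sc} to push $K$ back above $\rho_0/2$) addresses head-on the possible degeneration of the constraint margin. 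The paper's proof is silent on this point: when $t_k>1$ (mass approached from below), the factor $t_k^{N(1-q/2)}<1$ can in principle push $K(\tilde v_k)$ slightly below $\rho_0/2$, and the paper asserts $\tilde v_k \in S^{\rho_0}_{m'}$ without comment; your correction would also fix that. The paper's approach, in exchange, never needs monotonicity to get continuity and uses a single scaling rather than two.
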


\begin{proof}
Let us start with the continuity of $\ov{e}_\alpha(\cdot)$.
It is sufficient to show that for any positive sequence $(m_k)_{k \in \N}$ such that $m_k \to m' \in (0,m]$ as $k \to \infty$ 
one has $\lim_{k \to \infty} \overline{e}_{\alpha}(m_k) = \overline{e}_\alpha (m')$. We first prove
  \begin{equation}\label{eq:upper}
    \limsup_{k \to \infty} \overline{e}_\alpha (m_k) \leq \overline{e}_\alpha(m').
  \end{equation}
For any $u \in S^{\rho_0}_{m'}$ and each $k \in \mathbb{N}^+$, set $u_k \coloneq \sqrt{m_k/m'} \cdot u \in S_{m_k}$. 
Since $u_k \to u$ in $X$, it is clear that $u_k \in S^{\rho_0}_{m_k}$ for any $k$ large enough and $\lim_{k \to \infty} E_\alpha(u_k) = E_\alpha(u)$. 
Thus
  \begin{equation*}
    \limsup_{k \to \infty} \overline{e}_\alpha(m_k) \leq \limsup_{k \to \infty} E_\alpha(u_k) = E_\alpha(u).
  \end{equation*}
By the arbitrariness of $u \in S^{\rho_0}_{m'}$, we conclude that \eqref{eq:upper} holds. 

To complete the proof of the continuity, it remains to show
  \begin{equation}\label{eq:lower}
    \liminf_{k \to \infty} \overline{e}_\alpha (m_k) \geq \overline{e}_\alpha (m').
  \end{equation}
For each $k \in \mathbb{N}^+$, there exists $v_k \in S^{\rho_0}_{m_k}$ such that
  \begin{equation}\label{eq:lower-1}
    E_\alpha(v_k) \leq \overline{e}_\alpha (m_k) + \frac{1}{k}.
  \end{equation}
Setting $t_k \coloneq \sqrt{m'/m_k}$, we have $\tilde{v}_k \coloneq t^{(2 - N)/2}_k v_k(\cdot/t_k) \in S^{\rho_0}_{m'}$ and thus
  \begin{equation*}
    \begin{split}
      \overline{e}_\alpha(m') \leq E_\alpha(\tilde{v}_k)
        & \leq E_\alpha (v_k) + \vab{E_\alpha (\tilde{v}_k) - E_\alpha (v_k) } \\
        & \leq \overline{e}_\alpha ({m_k}) + \frac{1}{k} + \vab{E_\alpha (\tilde{v}_k) - E_\alpha (v_k) } \\
        & \leq \overline{e}_\alpha (m_k) + \frac{1}{k} + \vab{ \ab(t_k^{N(1-q/2)}-1) \Vab{\nabla v_k}_q^q - \frac{\alpha}{p} \ab(t_k^{p(1-N/2)+N}-1)\Vab{v_k}_p^p} .
    \end{split}
  \end{equation*}
Since $t_k \to 1$ the proof of \eqref{eq:lower} can be reduced to show that $(v_k)_{k \in \N}$ is bounded in $X$. 
To justify the boundedness, by \eqref{eq:lower-1} and \eqref{eq:upper}, we have $\limsup_{k \to \infty} E_\alpha(v_k) \leq \overline{e}_\alpha(m')$. 
Noting that $v_k \in S_{m_k}$ and $m_k \to m$, it follows from \cref{lemma:I} (ii) that $(v_k)_{k \in \N}$ is bounded in $X$ 
and the continuity of $\overline{e}_\alpha(\cdot)$ is proved.

Now we prove the monotonicity of $\ov{e}_\alpha(\cdot)$, 
which is equivalent to showing that for any $0 < m_1 < m_2 \leq m$ and for an arbitrary $\varepsilon > 0$ one has
  \begin{equation}\label{eq:nonincrease_1}
    \overline{e}_\alpha(m_2) \leq \overline{e}_{\alpha}(m_1) + \varepsilon.
  \end{equation}
By the definition of $\overline{e}_{\alpha}(m_1)$, there exists $u \in S^{\rho_0}_{m_1}$ such that
  \begin{equation}\label{eq:nonincrease_2}
    E_\alpha(u) \leq \overline{e}_\alpha (m_1) + \frac{\varepsilon}{2}.
  \end{equation}
Let $\chi\in C^\infty_c(\mathbb{R}^N)$ be a radial cut-off function such that $\chi(x) = 1$ for $|x| \leq 1$ and $\chi(x) = 0$ for $|x| \geq 2$. 
For any $\delta > 0$, we set $u_\delta(x) \coloneq u(x) \chi(\delta x)$. 
Since $u_\delta \to u$ in $X$ as $\delta \to 0^+$, one can fix a small enough constant $\delta > 0$ 
such that $K(u_\delta) > \rho_0(\alpha_0,m)/2$ and 
  \begin{equation}\label{eq:nonincrease_3}
    E_\alpha (u_\delta) \leq E_\alpha (u) + \frac{\varepsilon}{4}.
  \end{equation}
Then take $v \in C^\infty_c(\mathbb{R}^N) \setminus \{0\}$ such that $\supp  (v) \subset B(0,1+4/\delta) \setminus B(0,4/\delta)$ 
and by $\Vab{u_\delta}_2^2 \leq \Vab{u}_2^2 = m_1 < m_2$,  set
\begin{equation*}
    \tilde{v} \coloneq \frac{\sqrt{m_2 - \Vab{u_\delta}^2_2}}{\Vab{v}_2} v.
  \end{equation*}
For any $\mu \leq 0$, we define $w_\mu \coloneq u_\delta + \mu \ast \tilde{v}$, where $\mu \ast \tilde{v} \coloneq e^{\mu N/2}\tilde{v}(e^{\mu} x)$. 
Since $u_\delta$ and  $\mu \ast \tilde{v}$ have disjoint support, 
it is clear that $w_\mu \in S^{\rho_0}_{m_2}$.  Noting that $K( \mu \ast \tilde{v} ) \to 0$ as $\mu \to - \infty$, it follows from \cref{lemma:I} (i) that
  \begin{equation}\label{eq:nonincrease_4}
    E_\alpha (\mu_0 \ast \tilde{v}) \leq \frac{\varepsilon}{4} \qquad \text{for some}~\mu_0 < 0.
  \end{equation}
Now, by the definition of $\overline{e}_\alpha(m_2)$, \eqref{eq:nonincrease_4}, \eqref{eq:nonincrease_3} and \eqref{eq:nonincrease_2}, we obtain
  \begin{equation*}
    \overline{e}_\alpha(m_2) 
    \leq 
    E_\alpha (w_{\mu_0}) 
    = 
    E_\alpha (u_\delta) + E_\alpha (\mu_0 \ast \tilde{v}) 
    \leq 
    E_\alpha (u) + \frac{\varepsilon}{2} \leq \overline{e}_{\alpha}(m_1) + \varepsilon,
  \end{equation*}
that is \eqref{eq:nonincrease_1}. 
\end{proof}

\begin{lemma}\label{l:no-critical}
	Let $N \geq 1$, $q > 2$, $p_2 < p < p_q$, $\alpha \in(0,\alpha_0]$ and $0<m' \leq m$. 
	Then the functional $E|_{S_{m'}}$ does not admit any critical point 
	in $\Set{ u \in S_{m'} | K(u) \leq \rho_0 }$. 
\end{lemma}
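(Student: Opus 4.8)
The plan is to argue by contradiction, combining the Nehari and Pohozaev identities to land exactly on the functional $Q_\alpha$ and then invoke \eqref{Pest}. Suppose $u \in S_{m'}$ is a critical point of $E_\alpha|_{S_{m'}}$ with $K(u) \leq \rho_0$. Since $\Vab{u}_2^2 = m' > 0$ we have $u \not\equiv 0$, hence $K(u) > 0$. By the Lagrange multiplier rule there exists $\lambda \in \R$ such that $(\lambda, u)$ is a weak solution of \eqref{eq_main}, and by \cref{l:reg} this solution is regular enough for the identities below to hold.

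First I would record two scalar identities. Testing \eqref{eq_main} with $u$ itself gives the Nehari-type relation
\[
\Vab{\nabla u}_2^2 + \Vab{\nabla u}_q^q + \lambda \Vab{u}_2^2 = \alpha \Vab{u}_p^p ,
\]
while the Pohozaev identity of \cref{lem_Poho} reads
\[
\frac{N-2}{2N}\Vab{\nabla u}_2^2 + \frac{N-q}{Nq}\Vab{\nabla u}_q^q + \frac{\lambda}{2}\Vab{u}_2^2 = \frac{\alpha}{p}\Vab{u}_p^p .
\]

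The key step is to eliminate the unknown multiplier $\lambda$. Solving both relations for $\lambda \Vab{u}_2^2$ and equating them, the terms containing $\lambda$ cancel and, after clearing the resulting constant, one obtains precisely
\[
\Vab{\nabla u}_2^2 + \frac{1}{q}\ab( \frac{N+2}{2} q - N ) \Vab{\nabla u}_q^q - \frac{\alpha N}{p} \ab( \frac{p}{2} - 1 ) \Vab{u}_p^p = 0 ,
\]
that is, $Q_\alpha(u) = 0$. Conceptually this is nothing but the statement that the dilation derivative $\odv*{E_\alpha(u_\theta)}{\theta}\big|_{\theta=1}$, for $u_\theta(x) = \theta^{N/2} u(\theta x)$, vanishes at any solution; the verification that the coefficients match those defining $Q_\alpha$ is routine arithmetic using $\delta_q = \tfrac{N(q-2)}{2q}$ and $\delta_p = \tfrac{N(p-2)}{2p}$.

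To close the argument, I would invoke \eqref{Pest} of \cref{lemma:I}~(iii). Since $\alpha \in (0,\alpha_0]$, $\Vab{u}_2^2 = m' \leq m$ and $K(u) \leq \rho_0 = \rho_0(m,\alpha_0)$, that estimate yields $Q_\alpha(u) \geq \tfrac{1}{2} K(u) > 0$, contradicting $Q_\alpha(u) = 0$. There is no genuine obstacle once the Pohozaev identity is granted; the only point demanding care is that the hypotheses $\alpha \leq \alpha_0$ and $K(u) \leq \rho_0$ are exactly the two smallness constraints under which \eqref{Pest} was established, so both are indispensable to the contradiction. Notably, one never needs to determine the sign of $\lambda$, which is what keeps the argument clean despite the intermediate regime \eqref{middle}.
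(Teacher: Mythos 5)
Your proposal is correct and follows essentially the same route as the paper: both combine the Nehari identity (testing \eqref{eq_main} with $u$) and the Pohozaev identity of \cref{lem_Poho} to eliminate $\lambda$ and conclude $Q_\alpha(u)=0$, then contradict this with \eqref{Pest} of \cref{lemma:I}~(iii), which gives $Q_\alpha(u)\geq \tfrac12 K(u)>0$. The paper packages the same algebra as $0=-P_{\alpha,\lambda}(u)+\tfrac{N}{2}\bigl(\Vab{\nabla u}_2^2+\Vab{\nabla u}_q^q+\lambda\Vab{u}_2^2-\alpha\Vab{u}_p^p\bigr)=Q_\alpha(u)$, which is exactly your elimination of the multiplier.
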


\begin{proof}
Let $u \in S_{m'}$ and $K(u) \leq \rho_0$. Since $m' \leq m$, \cref{lemma:I} (iii) yields $Q_\alpha(u) > 0$. 
On the other hand, if $u$ is a critical point of $E|_{S_{m'}}$, 
then there exists $\lambda \in \R$ such that $(\lambda,u)$ satisfies \eqref{eq_main} and 
$P_{\alpha,\lambda} (u) = 0$ due to \cref{lem_Poho}. 
Since 
\[
\begin{aligned}
	0 = 
	-P_{\alpha, \lambda} (u) 
	= - P_{\alpha,\lambda} (u) + \frac{N}{2} \Bab{ \Vab{\nabla u}_2^2 + \Vab{\nabla u}_q^q + \lambda \Vab{u}_2^2 - \alpha \Vab{u}_p^p } 
	= Q_{\alpha} (u),
\end{aligned}
\]
we deduce that there is no critical point of $E|_{S_{m'}}$ in $\Set{ u \in S_{m'} | K(u) \leq \rho_0 }$. 
\end{proof}

In view of \cref{theomin} and \cref{prop:min} (ii), 
there is a radial minimizer $u_0 \in S_m$ corresponding to $e_{\alpha_0} (m) = 0$. 
From \cref{l:no-critical}, $K(u_0) > \rho (\alpha_0,m)$ holds. 
In particular, $u_0 \in S_m^{\rho_0}$ and $\overline{e}_{\alpha_0} (m) = E_{\alpha_0} (u_0) = 0$. 
Since $\alpha \mapsto E_\alpha(u_0)$ is continuous, there is $\alpha_0' \in (0,\alpha_0)$ such that 
\begin{equation}\label{bdd-ebar}
	\ov{e}_{\alpha} (m) \leq E_{\alpha} (u_0) < \min \Set{ \frac{1}{8q} , \ \frac{1}{N} } \rho_0
	\quad \text{for each $\alpha \in (\alpha_0', \alpha_0]$}. 
\end{equation}

\begin{lemma}\label{l:pos-Lag}
		Let $N \geq 1$, $q > 2$, $p_2 < p < p_q$, $\alpha \in(\alpha_0',\alpha_0]$ and $0<m' \leq m$. 
	Suppose that 
	$ \ov{e}_\alpha (m') = \ov{e}_\alpha (m)$, 		
	$u \in S^{\rho_0}_{m'}$ is a minimizer for $\ov{e}_\alpha(m')$ and 
	$(\lambda,u) \in \R \times X$ satisfies \eqref{eq_main}. 
	Then $\lambda > 0$ and $m'=m$ hold. 
\end{lemma}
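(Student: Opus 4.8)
The plan is to combine the two scaling identities satisfied by a solution of \eqref{eq_main} with the quantitative estimates of \cref{lemma:I}(iii) and the smallness bound \eqref{bdd-ebar}. Since $(\lambda,u)$ solves \eqref{eq_main}, testing the equation against $u$ gives the Nehari-type relation $\Vab{\nabla u}_2^2 + \Vab{\nabla u}_q^q + \lambda\Vab{u}_2^2 = \alpha\Vab{u}_p^p$, while \cref{lem_Poho} supplies the Pohozaev identity, which I would rewrite as
\[
0 = E_\alpha(u) - \frac{1}{N}K(u) + \frac{\lambda}{2}\Vab{u}_2^2 .
\]
Forming the same linear combination of the Pohozaev and Nehari identities as in the proof of \cref{l:no-critical}, the $\lambda$- and $\Vab{u}_p^p$-terms cancel and one obtains $Q_\alpha(u)=0$.

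The first genuine step is to promote the weak information $u\in S^{\rho_0}_{m'}$, which only records $K(u)>\rho_0/2$, to the strict lower bound $K(u)>\rho_0$. Indeed, if $K(u)\le\rho_0$, then from $\Vab{u}_2^2=m'\le m$ and $\alpha\le\alpha_0$ the estimate \eqref{Pest} in \cref{lemma:I}(iii) would give $Q_\alpha(u)\ge\frac12 K(u)>0$, contradicting $Q_\alpha(u)=0$. Hence $K(u)>\rho_0$. The positivity of $\lambda$ is now immediate from the Pohozaev identity: using $E_\alpha(u)=\ov{e}_\alpha(m')=\ov{e}_\alpha(m)$ and the bound $\ov{e}_\alpha(m)<\rho_0/N$ from \eqref{bdd-ebar}, I would write
\[
\frac{\lambda}{2}\,m' = \frac{1}{N}K(u) - \ov{e}_\alpha(m') > \frac{\rho_0}{N} - \frac{\rho_0}{N} = 0 ,
\]
and since $m'>0$ this yields $\lambda>0$.

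For the equality $m'=m$ I would argue by contradiction. Assume $m'<m$ and use the dilation $u_\theta(x):=u(x/\theta)$ employed in \cref{propem}(i), for which $\Vab{u_\theta}_2^2=\theta^N m'$ and
\[
\odv*{E_\alpha(u_\theta)}{\theta}\bigg|_{\theta=1} = N\,\ov{e}_\alpha(m') - K(u) < 0 ,
\]
the strict negativity coming again from $K(u)>\rho_0>N\ov{e}_\alpha(m')$. Choosing $\theta>1$ close to $1$ with $\theta^N m'\le m$ (possible because $m'<m$), continuity of $K$ gives $K(u_\theta)>\rho_0/2$, so $u_\theta\in S^{\rho_0}_{\theta^N m'}$ and therefore $\ov{e}_\alpha(\theta^N m')\le E_\alpha(u_\theta)<E_\alpha(u)=\ov{e}_\alpha(m')$. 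On the other hand, since $m'<\theta^N m'\le m$, the monotonicity of $\ov{e}_\alpha(\cdot)$ from \cref{lemma:continuity} forces $\ov{e}_\alpha(m)\le\ov{e}_\alpha(\theta^N m')\le\ov{e}_\alpha(m')=\ov{e}_\alpha(m)$, i.e. $\ov{e}_\alpha(\theta^N m')=\ov{e}_\alpha(m')$, which contradicts the strict inequality above. Hence $m'=m$.

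I expect the crux to be the passage $Q_\alpha(u)=0\Rightarrow K(u)>\rho_0$ together with the calibration \eqref{bdd-ebar}: as the authors point out, in the intermediate regime \eqref{middle} the sign of $\lambda$ is not directly legible from the Pohozaev identity as in the pure-Laplacian situation, and it is precisely the interplay between the identity $Q_\alpha(u)=0$, the localized coercivity \eqref{Pest}, and the smallness of the threshold energy $\ov{e}_\alpha(m)$ that closes both conclusions.
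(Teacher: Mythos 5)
Your proof is correct, but it reorganizes the argument in a genuinely different way from the paper. For $\lambda>0$, the paper argues in two steps: first $\lambda\ge 0$, by comparing $E_\alpha((1-t)u)$ with $E_\alpha(u)$ along the amplitude scaling $(1-t)u\in S^{\rho_0}_{(1-t)^2m'}$ and invoking the monotonicity of $\ov{e}_\alpha(\cdot)$ from \cref{lemma:continuity} (the derivative at $t=0$ being $\lambda\Vab{u}_2^2$ via the Nehari relation); then it excludes $\lambda=0$, since $P_{\alpha,0}(u)=0$ would force $K(u)=N E_\alpha(u)=N\ov{e}_\alpha(m)<\rho_0$ by \eqref{bdd-ebar}, contradicting \cref{l:no-critical}. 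You instead first establish $K(u)>\rho_0$ (re-deriving \cref{l:no-critical} from $Q_\alpha(u)=0$ and \eqref{Pest}) and then read $\lambda>0$ off the Pohozaev identity in one quantitative stroke, $\tfrac{\lambda}{2}m'=\tfrac1N K(u)-\ov{e}_\alpha(m)>\tfrac{\rho_0}{N}-\tfrac{\rho_0}{N}=0$; this bypasses the amplitude scaling and the monotonicity of $\ov{e}_\alpha$ for this part entirely. For $m'=m$, the paper again scales amplitude, $(1+\e)u$, whose energy derivative $E_\alpha'(u)u=-\lambda\Vab{u}_2^2$ is negative \emph{because} $\lambda>0$, so its second conclusion depends on the first; you use the spatial dilation $u(\cdot/\theta)$ of \cref{propem}, whose derivative $N\ov{e}_\alpha(m')-K(u)$ is negative thanks only to $K(u)>\rho_0>N\ov{e}_\alpha(m)$, so in your proof the two conclusions are logically independent. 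Both routes rest on the same three pillars — the Pohozaev identity of \cref{lem_Poho}, the coercivity threshold \eqref{Pest}, and the calibration \eqref{bdd-ebar} combined with the monotonicity of $\ov{e}_\alpha(\cdot)$ — but yours is somewhat more self-contained and makes the strictness quantitative, while the paper's amplitude-scaling computation exhibits directly how the sign of the multiplier governs the mass-monotonicity of the local minimization level, which is the mechanism it reuses elsewhere (e.g., in the proof of \cref{p:ex-loc-min}).
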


\begin{proof}
For $0<t \ll 1$, notice that $(1-t) u \in S^{ \rho_0 }_{ (1-t)^2m' }$. Thus \cref{lemma:continuity} leads to 
\[
0 
\leq 
\ov{e}_\alpha \ab( (1-t)^2 m' ) - \ov{e}_\alpha (m') 
\leq 
E_\alpha \ab( (1-t) u ) - E_\alpha (u). 
\]
Dividing by $t$ and letting $t \to 0^+$ give 
\[
0 \leq \odv*[delims-eval=.|]{ E_\alpha \ab( (1-t) u ) }{t}_{t=0} 
= - \Vab{\nabla u}_2^2 - \Vab{\nabla u}_q^q + \alpha \Vab{u}_p^p 
= \lambda \Vab{u}_2^2 .
\]
Thus, $\lambda \geq 0$ holds. 
If $\lambda =0$, then  
$\ov{e}_\alpha(m') = \ov{e}_\alpha(m)$, 
\eqref{bdd-ebar} and \cref{lem_Poho} yield 
\[
\frac{1}{N} \rho_0 > \ov{e}_\alpha(m) = \ov{e}_{\alpha} (m') 
= 
E_\alpha (u) = E_\alpha(u) - \frac{1}{N} P_{\alpha,0} (u) = \frac{1}{N} K(u). 
\]
However, this contradicts \cref{l:no-critical}. Therefore, $\lambda > 0$ holds.

We next prove $m = m'$. Argue by contradiction and assume $m' < m$. 
For $0<\e \ll 1$, since $(1+\e)^2 m' < m$ and $(1+\e) u \in S^{\rho_0}_{(1+\e)^2 m'}$, we have 
\[
\overline{e}_{\alpha} \ab( (1+\e) m' ) - \overline{e}_\alpha(m')  \leq E_\alpha \ab( (1+\e) u )  - E_\alpha(u) 
= \e  E_\alpha'(u)u + o(\e) = - \lambda \e \Vab{u}_2^2 + o(\e). 
\]
From $\lambda > 0$, it follows that $\ov{e}_\alpha ( \ab(1+\e) m' ) < \ov{e}_\alpha(m')$ holds for sufficiently small $\e \in (0,1)$. 
By \cref{lemma:continuity}, we infer that $ \ov{e}_\alpha (m) \leq \ov{e}_\alpha ((1+\e) m') < \ov{e}_\alpha (m') = \ov{e}_\alpha (m)$, which is absurd. 
Therefore, $m'=m$ holds.  
\end{proof}

\begin{proposition}\label{p:ex-loc-min}
	Suppose $N \geq 1$, $q > 2$, $p_2 < p < p_q$ and $\alpha \in (\alpha_0',\alpha_0)$. 
	Then $\ov{e}_\alpha (m)$ is attained by $v_\infty \in S_m^{\rho_0}$. 
	 Moreover, the associated Lagrange multiplier to $v_\infty$ is strictly positive 
	and the function $(\alpha_0',\alpha_0) \ni \alpha \mapsto \overline{e}_\alpha(m)$ is strictly decreasing. 
\end{proposition}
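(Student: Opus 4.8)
The plan is to extract a minimizing sequence for $\ov{e}_\alpha(m)$ that stays in the interior $\{K>\rho_0\}$, promote it to a Palais--Smale sequence whose weak limit solves \eqref{eq_main}, prove an energy/mass splitting along it, and then exclude loss of mass by playing the monotonicity of $\ov{e}_\alpha(\cdot)$ against \cref{l:pos-Lag}. So, let $(u_n)\subset S_m^{\rho_0}$ satisfy $E_\alpha(u_n)\to\ov{e}_\alpha(m)$; by \cref{lemma:I}(ii) it is bounded in $X$. Since $\ov{e}_\alpha(m)<\rho_0/(8q)$ by \eqref{bdd-ebar}, the estimate \eqref{Eest} traps the sequence away from the boundary: if $\rho_0/2<K(u_n)\le\rho_0$ then $E_\alpha(u_n)\ge K(u_n)/(2q)>\rho_0/(4q)$, impossible for large $n$; hence $K(u_n)>\rho_0$ eventually. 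Consequently $E_\alpha(u_n)>\tfrac1qK(u_n)-\tfrac\alpha p\Vab{u_n}_p^p>\rho_0/q-\tfrac\alpha p\Vab{u_n}_p^p$, so $\Vab{u_n}_p$ is bounded below; Lions' lemma then provides translations after which $u_n\rightharpoonup u_\infty\neq0$ weakly in $X$, with $m_\infty:=\Vab{u_\infty}_2^2\in(0,m]$.

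\textbf{The crux} is the splitting. Because the trapping keeps $(u_n)$ in the open set $\{K>\rho_0\}\subset S_m$, I would apply Ekeland's principle (as in \cite{DiJiPu25,Ik14}) to replace $(u_n)$ by a Palais--Smale sequence for $E_\alpha|_{S_m}$ with the same energy and trapping; testing the approximate equation against $u_n$ gives $K(u_n)-\alpha\Vab{u_n}_p^p-\lambda_n m=o_n(1)$, so the multipliers $\lambda_n$ are bounded and $\lambda_n\to\lambda$. The genuine difficulty is that $(\nabla u_n)$ converges only weakly in $L^q$, so the decomposition $E_\alpha(u_n)=E_\alpha(u_\infty)+E_\alpha(u_n-u_\infty)+o_n(1)$ is not automatic. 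I would resolve this exactly as announced in the introduction: \cite[Theorem 2.1]{BoMu92} upgrades the convergence to $\nabla u_n\to\nabla u_\infty$ a.e., which simultaneously shows that $(\lambda,u_\infty)$ solves \eqref{eq_main} and, through the Brezis--Lieb lemma \cite{BrLi83}, yields
\[
\Vab{u_n}_2^2=m_\infty+\Vab{w_n}_2^2+o_n(1),\qquad E_\alpha(u_n)=E_\alpha(u_\infty)+E_\alpha(w_n)+o_n(1),
\]
where $w_n:=u_n-u_\infty$. Since $(\lambda,u_\infty)$ solves \eqref{eq_main} with $\Vab{u_\infty}_2^2=m_\infty\le m$, it is a critical point of $E_\alpha|_{S_{m_\infty}}$, so \cref{l:no-critical} forces $K(u_\infty)>\rho_0$; thus $u_\infty\in S_{m_\infty}^{\rho_0}$ and $E_\alpha(u_\infty)\ge\ov{e}_\alpha(m_\infty)$.

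Next I exclude dichotomy. Suppose $0<m_\infty<m$. Then $\Vab{w_n}_2^2\to m-m_\infty\in(0,m)$, and since $\alpha<\alpha_0=\alpha_0(m)\le\alpha_0(\Vab{w_n}_2^2)$ — the map $\alpha_0(\cdot)$ being decreasing by \cref{Rem:alp0m} — \cref{prop:min}(iii) gives $E_\alpha(w_n)>0$ for large $n$, whence $\liminf_nE_\alpha(w_n)\ge0$. Together with the splitting and the monotonicity of $\ov{e}_\alpha(\cdot)$ from \cref{lemma:continuity} this gives
\[
\ov{e}_\alpha(m)=E_\alpha(u_\infty)+\lim_nE_\alpha(w_n)\ge\ov{e}_\alpha(m_\infty)\ge\ov{e}_\alpha(m),
\]
so equality holds throughout; hence $u_\infty$ minimizes $\ov{e}_\alpha(m_\infty)$, satisfies $\ov{e}_\alpha(m_\infty)=\ov{e}_\alpha(m)$ and solves \eqref{eq_main}, and \cref{l:pos-Lag} forces $m_\infty=m$, a contradiction. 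Therefore $m_\infty=m$, so $\Vab{w_n}_2\to0$; \eqref{GNq} gives $\Vab{w_n}_p\to0$, whence $\lim_nE_\alpha(w_n)=\tfrac12\lim_n\Vab{\nabla w_n}_2^2+\tfrac1q\lim_n\Vab{\nabla w_n}_q^q\ge0$, while the splitting with $E_\alpha(u_\infty)\ge\ov{e}_\alpha(m)$ forces $\lim_nE_\alpha(w_n)\le0$. Thus $\Vab{\nabla w_n}_2\to0$ and $\Vab{\nabla w_n}_q\to0$, so $w_n\to0$ in $X$ and $u_n\to v_\infty:=u_\infty$ strongly, with $v_\infty\in S_m^{\rho_0}$ attaining $\ov{e}_\alpha(m)$.

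Finally, positivity of the multiplier of $v_\infty$ is immediate from \cref{l:pos-Lag} with $m'=m$. For strict monotonicity, given $\alpha_0'<\alpha_1<\alpha_2<\alpha_0$ with $v_1\in S_m^{\rho_0}$ a minimizer for $\ov{e}_{\alpha_1}(m)$, testing $E_{\alpha_2}$ against $v_1$ yields $\ov{e}_{\alpha_2}(m)\le E_{\alpha_2}(v_1)=\ov{e}_{\alpha_1}(m)-\tfrac{\alpha_2-\alpha_1}{p}\Vab{v_1}_p^p<\ov{e}_{\alpha_1}(m)$, since $v_1\neq0$. The main obstacle is the splitting in the second step: the nonhomogeneous $(2,q)$-structure gives only weak convergence of $(\nabla u_n)$ in $L^q$, so passing through a Palais--Smale sequence and the a.e.\ gradient convergence of \cite{BoMu92} is precisely what makes the Brezis--Lieb decomposition, and hence the whole compactness argument, go through.
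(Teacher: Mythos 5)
Your proposal is correct and follows essentially the same route as the paper's proof: trapping via \eqref{Eest} and \eqref{bdd-ebar}, Ekeland's principle to upgrade the minimizing sequence to a Palais--Smale sequence, Boccardo--Murat a.e.\ gradient convergence plus the Brezis--Lieb lemma for the energy/mass splitting, and then \cref{l:no-critical}, \cref{l:pos-Lag} and the monotonicity of $\ov{e}_\alpha(\cdot)$ to force $\Vab{u_\infty}_2^2=m$, strong convergence, and the positive multiplier. The only cosmetic deviations are that you obtain $\liminf_n E_\alpha(w_n)\geq 0$ from the monotonicity of $\alpha_0(\cdot)$ (\cref{Rem:alp0m}) together with \cref{prop:min}(iii), whereas the paper rescales $w_n$ to mass $m-\Vab{u_\infty}_2^2$ and uses \eqref{mono}, and you prove strict monotonicity in $\alpha$ by direct comparison rather than by differentiating $E_\alpha(v_\infty)$ in $\alpha$ --- both are equivalent.
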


\begin{proof}
Let $\alpha \in (\alpha_0',\alpha_0)$ and $(u_n)_{n \in \N} \subset S^{\rho_0}_m$ be any minizming sequence 
for $\ov{e}_\alpha(m)$. 
From \cref{lemma:I} (ii), $(u_n)_{n \in \N}$ is bounded in $X$. Furthermore, 
by \eqref{Eest}, \eqref{bdd-ebar}, $K(u_n) > \rho_0/2$ and $E_\alpha(u_n) \to \ov{e}_\alpha(m)$, 
we may assume that for all $n \geq 1$, 
\begin{equation}\label{lowbddKu_n}
	\rho_0  < K(u_n);
\end{equation}
otherwise $K(u_n)/(2q) \leq E_\alpha (u_n) \leq \rho_0 / (4q)$ and this contradicts $K(u_n) > \rho_0/2$. 
Since $(u_n)_{n \in \N}$ is bounded in $X$, there exists $\delta_0 > 0$ such that 
\begin{equation}\label{uniawaybdry}
	\inf_{n \in \N} \dist_X \ab( \partial S_m^{\rho_0} , u_n ) \geq \delta_0 > 0, 
	\quad 
	\text{where} \ \partial S_m^{\rho_0} \coloneq \Set{ u \in S_m | K(u) = \frac{\rho_0}{2} }.
\end{equation}
To prove that $(u_n)_{n \in \N}$ contains a strongly convergent subsequence up to translations, 
we exploit another sequence which is close to $(u_n)_{n \in \N}$. Denote by $\overline{S^{\rho_0}_m}$ 
\[
\overline{S^{\rho_0}_m} \coloneq \Set{ u \in S_m | K(u) \geq \frac{\rho_0}{2} }. 
\]
Since $\overline{S^{\rho_0}_m}$ is closed in $X$ and $\ov{e}_\alpha(m) = \inf_{ \overline{S^{\rho_0}_m} } E_\alpha$, 
applying Ekeland's variational principle for $E_\alpha |_{ \overline{S^{\rho_0}_m} } $ and $(u_n)_{n \in \N}$ 
yields $(v_n)_{n \in \N} \subset \overline{S^{\rho_0}_m}$ such that 
\begin{equation}\label{Eke}
	\begin{aligned}
		&\ov{e}_\alpha (m) \leq E_\alpha(v_n) \leq E_\alpha(u_n), \quad \Vab{u_n - v_n}_X \to 0, 
		\\
		&
		E_\alpha(w) > E_\alpha(v_n) - o_n(1) \Vab{w-v_n}_X \quad \text{for each $w \in \overline{S^{\rho_0}_m}$ with $w \neq v_n$}.
	\end{aligned}
\end{equation}
Thus, $(v_n)_{n \in \N}$ is also a minimizing sequence for $\ov{e}_\alpha(m)$ and bounded in $X$. 
From \eqref{Eke}, \eqref{lowbddKu_n} and \eqref{uniawaybdry} it follows that 
\begin{equation}\label{lowbddv_n}
	\liminf_{n \to \infty} K(v_n) \geq \rho_0, \quad \Vab{E_\alpha'(v_n)}_{ T^*_{v_n} S_m } \to 0,
\end{equation}
where
\[
\begin{aligned}
	&\Vab{L}_{ T^*_uS_m } \coloneq \sup \Set{ Lw | w \in T_uS_m, \Vab{w}_X \leq 1 }, 
	\quad 
	T_uS_m \coloneq \Set{ w\in X | M'(u) w  = 0 }, 
	\\
	&M(u) \coloneq \Vab{u}^2_2.
\end{aligned}
\]
Notice that by \eqref{lowbddv_n}, we may find $(\lambda_n)_{ n \in \N} \subset \R$ such that 
\begin{equation}\label{vnPS}
	\Vab{E_\alpha'(v_n) + \lambda_n M'(v_n)}_{X^*} \to 0.
\end{equation}
From the boundeness of $(v_n)_{n \in \N}$, $(\lambda_n)_{n \in \N}$ is bounded and 
by passing to a subsequence if necessary, 
we may suppose $\lambda_n \to \lambda_\infty$ and $v_n \rightharpoonup v_\infty$ weakly in $X$. 
By \eqref{vnPS} and applying \cite[Theorem 2.1]{BoMu92} on $B_R$ for any $R>0$, we deduce that 
$\nabla v_n \to \nabla v_\infty$ strongly in $L^r_{\rm loc} (\RN)$ for any $r<q$. 
Therefore, $v_\infty$ is a solution of \eqref{eq_main} with $\lambda = \lambda_\infty$. 
Furthermore, up to subsequences, we may assume that $v_n \to v_\infty$ and $\nabla v_n \to \nabla v_\infty$ a.e. $\RN$.

In what follows, we will prove that $(v_n)_{n \in \N}$ has a strongly convergent subsequence in $X$ and 
the same is true for $(u_n)_{n \in \N}$ thanks to \eqref{Eke}.

	\smallskip 

\noindent
\textbf{Claim 1:}
\textsl{there exists $(x_n)_{n \in \N} \subset \RN$ such that }
\begin{equation}\label{nonva-loc}
	\liminf_{n \to \infty} \Vab{v_n}_{L^2(B_1(x_n))} > 0.
\end{equation}

\smallskip 

The argument is similar to the one in the proof of \cref{theomin}. 
If \eqref{nonva-loc} does not hold, then $\Vab{v_n}_p \to 0$ holds. 
By $v_n \in S_m^{\rho_0}$, \cref{lemma:I} (iii), \eqref{bdd-ebar} and \eqref{lowbddv_n}, the following contradiction occurs: 
\[
\frac{\rho_0}{8q} > \ov{e}_\alpha(m) 
= \liminf_{n \to \infty} E_\alpha(v_n) \geq \liminf_{n \to \infty} \frac{1}{q} K(v_n) \geq \frac{\rho_0}{q}. 
\]
Thus \eqref{nonva-loc} holds.

Since $E_\alpha$ is translation invariant, we may assume that $(v_n)_{n \in \N}$ fulfills \eqref{nonva-loc} with $x_n = 0$. 
From \eqref{nonva-loc}, $v_\infty \not \equiv 0$ and $m' \coloneq \Vab{v_\infty}_2^2 \in(0,  m ]$. 
Since $v_\infty$ is a critical point of $E_\alpha |_{S_{m'}}$, 
\cref{l:no-critical,l:pos-Lag} yields $K(v_\infty) > \rho_0$ and 
\begin{equation}\label{lowbdEv_inf}
	\overline{e}_\alpha \ab( m' ) \leq E_\alpha(v_\infty)
\end{equation}
By $v_n \to v_\infty$ and $\nabla v_n \to \nabla v_\infty$ a.e. $\RN$, the Brezis--Lieb Lemma implies 
\begin{equation}\label{split}
	E_\alpha(v_n) = E_\alpha(v_\infty) + E_\alpha (w_n) + o_n(1), \quad 
	K(v_n) = K(v_\infty) + K(w_n) + o_n(1),
\end{equation}
where $w_n \coloneq v_n - v_\infty$. Notice that $\Vab{w_n}_2^2 \to m - \Vab{v_\infty}_2^2 = m - m'$.

\smallskip 

\noindent
\textbf{Claim 2:}
\textsl{$\Vab{v_\infty}_2^2 = m$.}

\smallskip

The relations \eqref{mono} and $\alpha < \alpha_0(m)$ yield
\[
\liminf_{n \to \infty} E_\alpha (w_n) 
= \liminf_{n \to \infty} E_\alpha \ab(  \frac{ \sqrt{m-\Vab{v_\infty}_2^2} }{\Vab{w_n}_2} w_n  )
\geq e_\alpha \ab( m - \Vab{v_\infty}_2^2 ) \geq e_\alpha (m) = 0.
\]
Thus, \eqref{split}, \cref{lemma:continuity} and \eqref{lowbdEv_inf} give 
\[
\overline{e}_\alpha (m) 
\geq E_\alpha (v_\infty) + \liminf_{n \to \infty} E_\alpha (w_n) 
\geq E_\alpha (v_\infty) 
\geq \overline{e}_\alpha \ab( m' )
\geq \overline{e}_\alpha (m),
\]
which implies
\[
\overline{e}_\alpha(m) = E_\alpha(v_\infty) = \overline{e}_\alpha \ab( m' ). 
\] 
Thus, \cref{l:pos-Lag} leads to $\Vab{v_\infty}_2^2 = m$ and the Lagrange multiplier $\lambda_\infty$ is positive.

\smallskip 

\noindent
\textbf{Claim 3:}
\textsl{Conclusion.}

\smallskip 

From $\Vab{v_\infty}_2^2 = m$ and $v_n \rightharpoonup v_\infty$ weakly in $X$, 
it is easily seen that $\Vab{v_n - v_\infty}_2 \to 0$, which implies 
$\Vab{v_n - v_\infty}_r \to 0$ for any $r \in [2,q^*)$. The lower semicontinuity of norms with \eqref{lowbdEv_inf} and $m=m'$ leads to 
\[
\overline{e}_\alpha (m) \leq E_\alpha(v_\infty) \leq \liminf_{n \to \infty} E_\alpha(v_n) = \ov{e}_\alpha(m),
\]
which asserts $\Vab{\nabla v_n}_2^2 \to \Vab{\nabla v_\infty}_2^2$ and $\Vab{\nabla v_n}_q^q \to \Vab{\nabla v_\infty}_q^q$. 
Hence $\Vab{v_n - v_\infty}_X \to 0$ and $v_\infty$ is the desired minimizer for $\ov{e}_\alpha(m)$ and 
$(u_n)_{n \in \N}$ also converges in $X$ due to \eqref{Eke}.

Finally, by 
\[
\pdv*{E_\alpha (v_\infty)}{\alpha} = - \frac{1}{p} \Vab{v_\infty}_p^p < 0, \quad 
\ov{e}_{\alpha + \e} (m) \leq  E_{\alpha + \e} (v_\infty) < E_\alpha (v_\infty) = \ov{e}_\alpha(m),
\]
the function $(\alpha_0',\alpha_0) \ni \alpha \mapsto \ov{e}_\alpha(m)$ is strictly decreasing. 
\end{proof}

Finally, we prove

\begin{lemma}\label{lemma:sign}
	Let $N \geq 1$, $q>2$, $p_2 < p < p_q $, $\alpha \in (\alpha_0', \alpha_0)$. 
	Then any minimizer of $\overline{e}_\alpha(m)$ has a constant sign. 
	Furthermore, there exists a radial local minimizer of $\overline{e}_\alpha(m)$. 
\end{lemma}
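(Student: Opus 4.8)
The plan is to establish the two assertions in turn, reusing the regularity and Pohozaev machinery already in place rather than re-running the whole minimization.

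\emph{Constant sign.} Let $u$ be any minimizer of $\overline{e}_\alpha(m)$. Since $S_m^{\rho_0}$ is relatively open in $S_m$ and $u$ lies in it, $u$ is a local minimizer of $E_\alpha|_{S_m}$, hence a constrained critical point, so $(\lambda,u)$ solves \eqref{eq_main} for some $\lambda \in \R$. Because $\Vab{\nabla \vab{u}}_r = \Vab{\nabla u}_r$ for $r=2,q$ and $\Vab{\vab{u}}_s = \Vab{u}_s$ for $s=2,p$, we have $\vab{u} \in S_m^{\rho_0}$ and $E_\alpha(\vab{u}) = E_\alpha(u) = \overline{e}_\alpha(m)$; thus $\vab{u}$ is also a minimizer and therefore solves \eqref{eq_main} as well. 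From here I would repeat the argument of \cref{p:min-sign-Lag}: by \cref{l:reg}, $\vab{u} \in C^2(\RN)$ satisfies a linear uniformly elliptic equation with continuous coefficients, so the strong maximum principle forces $\vab{u} > 0$ in $\RN$ (as $\vab{u} \not\equiv 0$). Hence $u$ never vanishes and has a constant sign.

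\emph{Radial minimizer.} I would start from the minimizer $v_\infty \in S_m^{\rho_0}$ produced in \cref{p:ex-loc-min}. It solves \eqref{eq_main}, so \cref{lem_Poho} together with the identity used in the proof of \cref{l:no-critical} gives $Q_\alpha(v_\infty) = 0$. Let $w$ be the symmetric decreasing rearrangement of $\vab{v_\infty}$. By \cite[Lemma 1]{Ta76} one has $\Vab{w}_2^2 = m$, $\Vab{w}_p = \Vab{v_\infty}_p$ and $\Vab{\nabla w}_r \leq \Vab{\nabla v_\infty}_r$ for $r=2,q$; hence $w \in S_m$ is radial, $E_\alpha(w) \leq E_\alpha(v_\infty) = \overline{e}_\alpha(m)$, and $Q_\alpha(w) \leq Q_\alpha(v_\infty) = 0$, the latter because in $Q_\alpha$ both gradient terms carry positive coefficients (recall $q>2$) while the $L^p$-term is left unchanged.

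The decisive and most delicate step is to guarantee that the rearrangement does not leave the constraint set, i.e. that $K(w) > \rho_0/2$; this is not automatic, since rearrangement can only decrease $K$. Here I would argue by contradiction: if $K(w) \leq \rho_0$, then $\Vab{w}_2^2 = m$ together with \eqref{Pest} of \cref{lemma:I} (iii) yields $Q_\alpha(w) \geq \tfrac12 K(w)$, and $K(w) > 0$ since $w \in S_m$ cannot be a nonzero constant in $L^2(\RN)$; hence $Q_\alpha(w) > 0$, contradicting $Q_\alpha(w) \leq 0$. Therefore $K(w) > \rho_0 > \rho_0/2$, so $w \in S_m^{\rho_0}$ and $\overline{e}_\alpha(m) \leq E_\alpha(w) \leq \overline{e}_\alpha(m)$, giving $E_\alpha(w) = \overline{e}_\alpha(m)$; thus $w$ is the desired radial local minimizer. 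I expect this coupling of the Pohozaev functional $Q_\alpha$ with the localization estimate \eqref{Pest}, which converts the loss of control on $K$ under rearrangement into a contradiction, to be the main obstacle and the genuinely new ingredient compared with the unconstrained global problem of \cref{theomin}.
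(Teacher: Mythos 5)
Your proposal is correct, and it splits into two halves of different character. The radial-minimizer half is essentially the paper's own proof: the paper likewise starts from a (constant-sign) local minimizer $u$, uses \cref{lem_Poho} to obtain $Q_\alpha(u)=0$, the rearrangement inequalities to get $E_\alpha(u^*)\leq E_\alpha(u)$ and $Q_\alpha(u^*)\leq Q_\alpha(u)=0$, and then \cref{lemma:I} (iii), i.e.\ \eqref{Pest}, to force $K(u^*)>\rho_0$; your contradiction argument (if $K(w)\le\rho_0$ then $Q_\alpha(w)\ge\tfrac12 K(w)>0$) is exactly this step, just spelled out. The constant-sign half, however, takes a genuinely different route. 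The paper argues purely variationally: writing $v=v^++v^-$ with $m^\pm\coloneq\Vab{v^\pm}_2^2$ both assumed nonzero, it uses $K(v^+)+K(v^-)=K(v)>\rho_0$ (via \cref{l:no-critical}) to place, say, $v^+$ in $S^{\rho_0}_{m^+}$, the monotonicity of $\overline{e}_\alpha(\cdot)$ from \cref{lemma:continuity} to get $E_\alpha(v^+)\geq\overline{e}_\alpha(m^+)\geq\overline{e}_\alpha(m)$, and the fact that for $\alpha<\alpha_0(m)\leq\alpha_0(m^-)$ the global infimum $e_\alpha(m^-)=0$ is never attained (\cref{prop:min} (iii) together with \cref{Rem:alp0m}) to get the strict inequality $E_\alpha(v^-)>0$, producing the contradiction $\overline{e}_\alpha(m)>\overline{e}_\alpha(m)$. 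You instead observe that $\vab{u}$ remains in the relatively open set $S^{\rho_0}_m$ (since $K(\vab{u})=K(u)$), hence is itself a local minimizer of $E_\alpha|_{S_m}$ and therefore a critical point, and then run the regularity-plus-strong-maximum-principle argument of \cref{p:min-sign-Lag}. Both routes are sound. Yours is shorter given that \cref{l:reg} and \cref{p:min-sign-Lag} are already in place, and it never uses the strict inequality $\alpha<\alpha_0$, so it would cover $\alpha=\alpha_0$ as well (the paper handles that endpoint separately, since there the local minimizer is also a global one); the paper's splitting argument, on the other hand, avoids any appeal to elliptic regularity or maximum principles and rests only on the variational properties of $e_\alpha$ and $\overline{e}_\alpha$ established earlier.
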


\begin{proof}
	For given minimizer $v \in S^{\rho_0}_m$ of $\overline{e}_\alpha (m)$, we set
	\begin{equation*}
		v^+ \coloneq \max \set{0, v } \qquad \text{and} \qquad v^- \coloneq \min\set{0, v},
	\end{equation*}
	and suppose by contradiction that $m^+ \coloneq \Vab{v^+}^2_2 \neq 0$ and $m^- \coloneq \Vab{v^-}^2_2 \neq 0$.  
	Clearly, \cref{l:no-critical} gives
	\begin{equation*}
		\rho_0 < K(v) = K(v^+) + K(v^-) .
	\end{equation*}
	Without loss of generality, we may assume that $v^+ \in S^{\rho_0}_{m^+}$ 
	and thus $E_\alpha(v^+) \geq \overline{e}_{\alpha} (m^+) $. 
	By $\alpha < \alpha_0(m)$, \cref{prop:min}, \cref{Rem:alp0m} and \eqref{mono}, 
	the relation $\alpha < \alpha_0 (m) \leq \alpha_0 (m^-)$ holds 
	and the global infimum $e_\alpha(m^-) = 0$ is not achieved, and hence $E_\alpha(v^-) > e_\alpha (m^-) = 0$. 
	Using also \cref{lemma:continuity}, we obtain a contradiction
	\begin{equation*}
		\overline{e}_\alpha(m) = E_\alpha(v) = E_\alpha(v^+) + E_\alpha(v^-) > \overline{e}_\alpha (m^+) \geq \overline{e}_\alpha (m).
	\end{equation*}
	Hence $v$ has a constant sign on $\mathbb{R}^N$.

For the existence of radial local minimizers, let $u \in S^{\rho_0}_m$ be a local minimizer. We may suppose $u>0$ in $\RN$. 
Since $E_\alpha(u^*) \leq E_\alpha(u)$ holds where $u^*$ is the symmetric decreasing rearrangement of $u$,  
if we can prove $u^* \in S^{\rho_0}_m$, 
then $u^*$ is the desired radial local minimizer. 
To prove $u^* \in S^{\rho_0}_m$, notice that $(\lambda,u)$ is a solution of \eqref{eq_main} and 
that the Pohozaev identity $P_{\alpha,\lambda} (u) = 0$ holds thanks to \cref{lem_Poho}. Hence, 
\[
0 = \frac{N}{2} \ab( \Vab{\nabla u}_2^2 + \Vab{\nabla u}_q^q + \lambda \Vab{u}_2^2 - \alpha \Vab{u}_p^p ) - P_{\alpha,\lambda} (u) 
= Q_\alpha(u) \geq Q_\alpha(u^*). 
\]
Thus, \cref{lemma:I} (iii) yields $K(u^*) > \rho_0$ and $u^* \in S^{\rho_0}_m$.  
\end{proof}

\begin{proof}[Proof of \cref{theorem:local}]
	It is easily seen that \cref{theorem:local} (i) and (iii) hold by \cref{prop:min,p:ex-loc-min} and \cref{l:no-critical,l:pos-Lag,lemma:sign}.
	For (ii), what remains to prove is that local minimizers of $\ov{e}_\alpha(m)$ are an energy ground state solution. 
	However, this follows from \cref{l:no-critical}. Thus, \cref{theorem:local} (ii) also holds. 
\end{proof}

\section{Mountain pass type solution}\label{Secmou}

In the present section, 
we prove the existence of critical point $u$ of $E_\alpha |_{S_m}$ with $E_\alpha (u) > \ov{e}_\alpha (m)$ 
when $\alpha_0' < \alpha$ via the mountain pass theorem for $E_\alpha |_{S_m}$. 
For this purpose, let us introduce 
\[
X_\rad \coloneq \Set{ u \in X | \text{ $u$ is radially symmetric} }.
\]

To prove \cref{t:ex-mp}, we use the approach in \cite{Je97} and introduce an auxiliary functional $J$.
For any $u\in X_\rad$ and $\s\in\R$, we define 
$$(\s\ast u)(x):=e^{\sigma N/2}u(e^{\s} x)$$
and the  functional $J:\R\times X_\rad\to \R$  as 
\begin{equation}\label{Jtheta}
J(\s, u) \coloneq
E_\alpha \ab(e^{\sigma N/2}u\big(e^{\s}\cdot ))
=\frac{1}2 e^{2\s}\Vab{\nabla u}_2^2 + \frac{1}{q} e^{\s\left[q(\frac{N}{2}+1)-N\right]}\Vab{\nabla u}^q_q  -\frac{\alpha}{p}e^{\s N(\frac p2-1)}\|u\|_p^p.
\end{equation}
Notice that 
\[
2<N\left(\frac p2-1\right)<q\left(\frac{N}{2}+1\right)-N
\]
and $J(\sigma, u) \to 0$ as $\sigma \to - \infty$ for each $u \in X_\rad$. Write 
\[
S_{m,r} \coloneq \Set{ u \in S_m | u \in X_\rad }.
\]

In what follows, we fix $m>0$ and $\alpha \in (\alpha_0',\infty)$ 
where $\alpha_0' >0$ is chosen so that \eqref{bdd-ebar} holds. 
Let $\rho_0 = \rho_0 (\alpha,m) > 0$ be the number in \cref{lemma:I} (iii). 
In view of \cref{theomin} and \cref{theorem:local}, when $\alpha_0' < \alpha < \alpha_0$, 
$v_\alpha$ denotes a radial local minimizer of $\overline{e}_\alpha(m)$, 
and when $\alpha_0 \leq \alpha$, $v_\alpha$ is a radial global minimizer of $e_\alpha(m)$. 
As in the proof of \cref{lemma:sign}, $Q_\alpha (v_\alpha) = 0$ and 
\[
K(v_\alpha) > \rho_0.
\]
Recalling \cref{lemma:I} (iii), \eqref{bdd-ebar} and $e_\alpha (m) \leq 0$, we have 
\begin{equation}\label{lowbdEa1}
	E_\alpha (v_\alpha) \leq \overline{e}_\alpha (m)  < \min \Set{ \frac{1}{8q} , \frac{1}{N} } \rho_0 
	< \frac{1}{2} \inf_{\substack{ u \in S_{m,r} \\ K(u) = \rho_0}} E_\alpha(u) \eqcolon \frac{1}{2} \delta_0.
\end{equation}
On the other hand, since $K( \sigma * u ) \to 0$ as $\theta \to -\infty$, there exists $v_0 \in S_{m,r}$ such that 
\[
K(v_0) < \rho_0, \quad E_\alpha (v_0) < \frac{1}{2} \delta_0. 
\]
Therefore, 
\begin{equation}\label{gammamp}
	\Gamma \coloneq \Set{ \gamma\in C([0,1],S_{m,r}) | K(\gamma(0)) < \rho_0 < K(\gamma(1)), 
		\ E_\alpha (\gamma (0)) < \frac{\delta_0}{2}, \ E_\alpha(\gamma(1)) < \frac{\delta_0}{2}  } \neq \emptyset. 
\end{equation}
Thus, $E_\alpha |_{S,r}$ has a mountain pass geometry and 
we can define its mountain pass level by
\[ c_{mp} \coloneq \inf_{\gamma\in  \Gamma}\max_{t\in [0,1]}E_\alpha \ab(\gamma(t)).\]
Observe that for every $\gamma \in \Gamma$ there exists $t_\gamma \in (0,1)$ such that $K(\gamma(t_\gamma)) = \rho_0$. 
Thus, \eqref{lowbdEa1} implies
\begin{equation*}
	c_{mp} \geq \delta_0.
\end{equation*}
We define the following minimax level for $J|_{\R\times S_{m,r}}$:
\[
\wt{c}_{mp} \coloneq \inf_{(\s,\gamma)\in \Sigma\times \Gamma}\max_{t\in [0,1]}J\ab(\s(t),\gamma(t) ), 
\]
where $\Gamma$ is defined in \eqref{gammamp} and 
\[\Sigma \coloneq \Set{ \s\in C([0,1],\R) | \s(0)=\s(1)=0}.\]

As in \cite{Je97} (see also \cite[Lemma 5.4]{BaMePo25}), the following hold: 
\begin{lemma}
	The minimax levels of $E_\alpha$ and $J$ coincide, namely $c_{mp}=\wt{c}_{mp}$.
\end{lemma}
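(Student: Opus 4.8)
The plan is to prove the two inequalities $\wt{c}_{mp}\le c_{mp}$ and $c_{mp}\le\wt{c}_{mp}$ separately, using as a bridge the identity $J(\s,u)=E_\alpha(\s\ast u)$ built into \eqref{Jtheta}, which says that the dilation action intertwines the two functionals. The inequality $\wt{c}_{mp}\le c_{mp}$ is immediate: for any $\gamma\in\Gamma$, taking $\s\equiv0$ gives a pair $(\s,\gamma)\in\Sigma\times\Gamma$, and since $J(0,u)=E_\alpha(u)$ we have $\max_{t}J(0,\gamma(t))=\max_{t}E_\alpha(\gamma(t))$; taking the infimum over $\gamma\in\Gamma$ yields the claim.

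For the reverse inequality $c_{mp}\le\wt{c}_{mp}$, the dilation is used in an essential way. Given any $(\s,\gamma)\in\Sigma\times\Gamma$, I would set $\wt{\gamma}(t):=\s(t)\ast\gamma(t)$ and verify that $\wt{\gamma}\in\Gamma$. Radial symmetry is preserved by the dilation, and since $\Vab{\s\ast u}_2=\Vab{u}_2$ for every $\s$, the mass constraint is preserved as well, so $\wt{\gamma}(t)\in S_{m,r}$. The endpoint requirements in \eqref{gammamp} are inherited directly from $\gamma$: from $\s(0)=\s(1)=0$ we get $\wt{\gamma}(0)=\gamma(0)$ and $\wt{\gamma}(1)=\gamma(1)$, so the bounds on $K$ and on $E_\alpha$ at $t=0,1$ carry over unchanged. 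Once continuity of $\wt{\gamma}$ is granted (see below), we have $\wt{\gamma}\in\Gamma$, and the intertwining identity gives $E_\alpha(\wt{\gamma}(t))=J(\s(t),\gamma(t))$ for every $t$, whence
\[
c_{mp}\le\max_{t\in[0,1]}E_\alpha(\wt{\gamma}(t))=\max_{t\in[0,1]}J(\s(t),\gamma(t)).
\]
Taking the infimum over $(\s,\gamma)\in\Sigma\times\Gamma$ produces $c_{mp}\le\wt{c}_{mp}$, and combining the two inequalities completes the proof.

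The main obstacle is the continuity of $t\mapsto\wt{\gamma}(t)=\s(t)\ast\gamma(t)$ in $X$, which reduces to the joint continuity of the scaling map $\R\times X_\rad\ni(\s,u)\mapsto\s\ast u\in X$. I would handle this by splitting $\s_n\ast u_n-\s_*\ast u_*=\s_n\ast(u_n-u_*)+(\s_n\ast u_*-\s_*\ast u_*)$ for $\s_n\to\s_*$ in $\R$ and $u_n\to u_*$ in $X$. The first summand tends to zero in $X$ because of the explicit scaling identities $\Vab{\s\ast v}_2=\Vab{v}_2$, $\Vab{\nabla(\s\ast v)}_2=e^{\s}\Vab{\nabla v}_2$ and $\Vab{\nabla(\s\ast v)}_q^q=e^{\s[q(N/2+1)-N]}\Vab{\nabla v}_q^q$, together with the boundedness of $(\s_n)_{n\in\N}$ and $u_n\to u_*$ in $X$. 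The second summand tends to zero by the strong continuity of the dilation group on $L^2$ and on $L^q$, namely that for fixed $v$ the map $\s\mapsto e^{\s N/r}v(e^{\s}\cdot)$ is continuous in $L^r$, applied to $v=u_*$ and $v=\nabla u_*$ with $r=2$ and $r=q$. This last point is the only genuinely analytic ingredient; the rest of the argument is the formal correspondence between paths, exactly as in \cite{Je97}.
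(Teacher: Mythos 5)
Your proof is correct and is exactly the standard argument from \cite{Je97} (also \cite[Lemma 5.4]{BaMePo25}) that the paper invokes without reproducing: the trivial inclusion $\sigma\equiv 0$ gives $\wt{c}_{mp}\le c_{mp}$, and the composed path $t\mapsto\sigma(t)\ast\gamma(t)$, which stays in $\Gamma$ because $\sigma(0)=\sigma(1)=0$ fixes the endpoints and the dilation preserves $S_{m,r}$, gives the reverse inequality via $E_\alpha(\sigma\ast u)=J(\sigma,u)$. Your treatment of the joint continuity of $(\sigma,u)\mapsto\sigma\ast u$ (splitting into a bounded-dilation term and the strong continuity of the dilation group on $L^2$ and $L^q$) is precisely the analytic point that makes this formal correspondence rigorous, so nothing is missing.
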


Next, as in \cite[Proposition 2.2]{Je97} (\cite[Lemma 5.4]{BaMePo25}), 
via Ekleand's variational principle, we have

\begin{lemma}
\label{le:ekeland}
	Let $\e>0$. Suppose that $(\tilde\s,\tilde\gamma)\in \Sigma \times\Gamma$ satisfies 
	\begin{equation*}
		\max_{t \in [0,1]}J\ab(\tilde\s(t),\tilde\gamma(t))\le c_{mp} +\e.
	\end{equation*}
	Then there exists $(\s, u)\in \R\times S_{m,r}$ such that
	\begin{enumerate}[label={\rm (\arabic{*})},ref=\arabic{*}]
		\item 
		 $J(\s,u)\in [c_{mp}-\e,c_{mp}+\e]$;
		\item 
		$\dist_{\R \times S_{m,r}} \ab( (\s,u), \tilde\s \ab( [0,1] ) \times \tilde\gamma \ab( [0,1] ) ) \le \sqrt{\e}$;
		\item 
		$\Vab{ \ab( J|_{\R\times S_{m,r}} )'(\s,u) } \leq 8 \sqrt{\e}$.
	\end{enumerate}
\end{lemma}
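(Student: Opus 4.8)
The plan is to run Ekeland's variational principle on the space of paths and then read off the almost-critical point as the point where $J$ attains its maximum along the resulting path. First I would regard $\R \times S_{m,r}$ as a complete $C^1$ Finsler manifold: $S_{m,r}$ is the intersection of the $L^2$-sphere $M^{-1}(m)$ with the closed subspace $X_\rad$, hence a complete $C^1$-submanifold of $X_\rad$, and $J$ is of class $C^1$ on $\R \times X_\rad$ (the $q$-term and the $L^p$-term are $C^1$ by \eqref{embeddings}, and the scaling $\sigma \mapsto \sigma \ast u$ is smooth). On the path space $\Sigma \times \Gamma$, equipped with the uniform metric $d((\sigma_1,\gamma_1),(\sigma_2,\gamma_2)) \coloneq \max_t \vab{\sigma_1(t)-\sigma_2(t)} + \max_t \Vab{\gamma_1(t)-\gamma_2(t)}_X$, I would define $\Psi(\sigma,\gamma) \coloneq \max_{t \in [0,1]} J(\sigma(t),\gamma(t))$; this is continuous and bounded below, and by the previous lemma $\inf_{\Sigma \times \Gamma}\Psi = \wt{c}_{mp} = c_{mp}$. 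A mild technical point is that the endpoint conditions defining $\Gamma$ in \eqref{gammamp} are open; since every admissible endpoint satisfies $E_\alpha < \delta_0/2$ while $c_{mp} \geq \delta_0$, these endpoints never realize the maximum, so Ekeland may be run on the closure and all competitors produced remain admissible.

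Next I would apply Ekeland's variational principle to $\Psi$, calibrating the parameters so that the displacement is controlled by $\sqrt{\e}$: starting from $(\tilde\sigma,\tilde\gamma)$ with $\Psi(\tilde\sigma,\tilde\gamma) \leq c_{mp}+\e = \inf\Psi + \e$, one obtains $(\sigma^*,\gamma^*) \in \Sigma \times \Gamma$ with $\Psi(\sigma^*,\gamma^*) \leq \Psi(\tilde\sigma,\tilde\gamma)$, $d((\sigma^*,\gamma^*),(\tilde\sigma,\tilde\gamma)) \leq \sqrt{\e}$, and the slope inequality
\[
\Psi(\sigma,\gamma) \geq \Psi(\sigma^*,\gamma^*) - \sqrt{\e}\, d((\sigma,\gamma),(\sigma^*,\gamma^*)) \quad \text{for all } (\sigma,\gamma) \in \Sigma \times \Gamma .
\]
Choosing $t_0$ with $J(\sigma^*(t_0),\gamma^*(t_0)) = \Psi(\sigma^*,\gamma^*)$ and setting $(\s,u) \coloneq (\sigma^*(t_0),\gamma^*(t_0)) \in \R \times S_{m,r}$ gives (1) at once, since $c_{mp} = \inf\Psi \leq \Psi(\sigma^*,\gamma^*) \leq c_{mp}+\e$, and it gives (2), since the uniform distance controls the value at $t_0$: $\dist((\s,u),\tilde\sigma([0,1]) \times \tilde\gamma([0,1])) \leq \vab{\sigma^*(t_0)-\tilde\sigma(t_0)} + \Vab{\gamma^*(t_0)-\tilde\gamma(t_0)}_X \leq \sqrt{\e}$.

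The hard part is (3): producing a maximizing $t_0$ at which the constrained slope $\Vab{(J|_{\R \times S_{m,r}})'(\s,u)}$ is small. I would argue by contradiction, assuming $\Vab{(J|_{\R \times S_{m,r}})'(\sigma^*(t),\gamma^*(t))} > 8\sqrt{\e}$ at every $t$ in the maximum set $\set{t : J(\sigma^*(t),\gamma^*(t)) = \Psi(\sigma^*,\gamma^*)}$, hence by continuity on a neighborhood of it, while $J$ stays strictly below the maximum off this neighborhood. On the region where the slope is large I would construct a locally Lipschitz pseudo-gradient vector field $V$ for $J$ that is tangent to $\R \times S_{m,r}$ (projecting out the $L^2$-normal direction of the sphere and preserving radial symmetry, since $J$ and the constraint are $O(N)$-invariant), with $\Vab{V} \leq 1$ and $\langle (J|_{\R \times S_{m,r}})'(\s,u), V\rangle \geq \frac12 \Vab{(J|_{\R \times S_{m,r}})'(\s,u)}$. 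Flowing $(\sigma^*,\gamma^*)$ a short time $\tau > 0$ along $-V$, and cutting the flow off near $t = 0,1$ (legitimate because those endpoints lie strictly below $c_{mp}$ by \eqref{gammamp}), yields a competitor $(\sigma',\gamma') \in \Sigma \times \Gamma$ with $\Psi(\sigma',\gamma') \leq \Psi(\sigma^*,\gamma^*) - 4\sqrt{\e}\,\tau$ and $d((\sigma',\gamma'),(\sigma^*,\gamma^*)) \leq \tau$; this violates the slope inequality and forces (3).

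The principal obstacle, exactly as in \cite{Je97,BaMePo25}, is this deformation step: one must build the pseudo-gradient on the product manifold $\R \times S_{m,r}$ so that the flow stays on the $L^2$-sphere, preserves radial symmetry, and fixes the path endpoints, and one must track the numerical constants through the pseudo-gradient inequality and Ekeland's estimate to reach the precise threshold $8\sqrt{\e}$. I would also emphasize that $\partial_\sigma J(\s,u)$ equals the Pohozaev-type quantity $Q_\alpha(\s \ast u)$, so controlling the full gradient on $\R \times S_{m,r}$ simultaneously controls $E_\alpha|_{S_m}$ and the Pohozaev functional at $\s \ast u$ — which is precisely the reason for passing to the augmented functional $J$, and which makes this lemma the engine that will later supply a Palais--Smale sequence carrying the extra Pohozaev information needed to prove \cref{t:ex-mp}.
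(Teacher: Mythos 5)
Your proposal is correct and is exactly the argument the paper relies on: the paper gives no proof of \cref{le:ekeland} but defers to \cite[Proposition 2.2]{Je97} and \cite[Lemma 5.4]{BaMePo25}, whose proof is precisely your scheme — Ekeland's variational principle with parameter $\sqrt{\e}$ applied to $\Psi(\s,\gamma)=\max_t J(\s(t),\gamma(t))$ on the (completed) path space $\Sigma\times\Gamma$, followed by a contradiction via a pseudo-gradient deformation tangent to $\R\times S_{m,r}$, cut off near the endpoints (where $J\le \delta_0/2 < \delta_0 \le c_{mp}$), with the constants $\tfrac12\cdot 8\sqrt{\e}=4\sqrt{\e}>\sqrt{\e}$ closing the argument. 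Your side remarks (e.g.\ $\partial_\sigma J(\s,u)=Q_\alpha(\s\ast u)$, and the openness of the endpoint conditions in \eqref{gammamp}) are also accurate.
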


\begin{proof}[Proof of \cref{t:ex-mp}]
We proceed through steps.

\smallskip 

\noindent
\textbf{Step 1:}
\textsl{The construction of a special Palais Smale sequence associated to the auxiliary functional $J$ defined in \eqref{Jtheta}.}

\smallskip 

Choose $(\gamma_n)_{n \in \N} \subset \Gamma$ so that 
\[
\max_{t \in [0,1]} E_\alpha (\gamma_n(t)) \to c_{mp}. 
\]
Since $E_\alpha(\vab{u}) = E_\alpha(u)$ and $K(\vab{u}) = K(u)$, 
we have $\vab{\gamma_n} \in \Gamma$. 
Thus, replacing $\gamma_n(t)(x)$ by $\vab{\gamma_n(t)} (x)$, 
we may also suppose $\gamma_n(t) \geq 0$ for each $t \in [0,1]$. 
Furthermore, let $(\rho_\eta)_{\eta > 0}$ be a mollifier. It is not hard to verify 
$\max_{0 \leq t \leq 1} \Vab{ \rho_\eta * \gamma_n(t) - \gamma_n(t) }_X \to 0$ as $\eta \to 0$. 
By changing $\gamma_n(t)$ by $ \sqrt{m} (\rho_\eta * \gamma_n (t) ) / \Vab{ \rho_\eta * \gamma_n (t) }_2$, 
we may suppose $\gamma_n (t) \in C^\infty(\RN) \cap X_\rad$ for every $t \in [0,1]$. 
Finally, since $E_\alpha(u^*) \leq E_\alpha(u)$ for each $u \in X$ where $u^*$ denotes the symmetric rearrangement of $u$, 
by considering $(\gamma_n(t))^*$ and \cite{AlLi89,Co84}, 
the map $[0,1] \ni t \mapsto (\gamma_n(t))^* $ belongs to $\Gamma$ and 
we may suppose that $\gamma_n(t)$ is nonincreasing in $\vab{x}$ for any $t \in [0,1]$.

Since $(0,\gamma_n) \in \Sigma \times \Gamma$ satisfies 
\[
\max_{t \in [0,1]} J \ab( 0, \gamma_n(t) ) \to c_{mp},
\]
by \cref{le:ekeland}, there exist $ \ab(  (\sigma_n,u_n) )_{n \in \N} \in \R \times S_{m,r}$ such that 
\begin{enumerate}[label=(\alph*), ref=\alph{*}]
	\item 
	$J(\sigma_n,u_n) \to c_{mp} > 0$; 
	\item \label{PS2}
	$\vab{\sigma_n} + \dist_{S_{m,r}} \ab( u_n , \gamma_n([0,1]) ) \to 0$; 
	\item \label{PS3}
	$\Vab{ \ab( J|_{\R\times S_{m,r}} )'(\s_n,u_n) } \to 0$. 
\end{enumerate}
By $\sigma_n \to 0$ and $ E_\alpha ( \sigma_n * u_n ) = J(\sigma_n, u_n) \to c_{mp}$, 
\cref{lemma:I} implies that $(u_n)_{n \in \N}$ is bounded in $X$. 
Then $u_n \rightharpoonup u_\infty$ weakly in $X$. 
In addition, by \eqref{PS2}, we observe that $\dist_X(u_n, \gamma_n([0,1])) \to 0$ and $\Vab{u_n^-}_2 \to 0$ due to 
$\gamma_n (t) \geq 0$ and 
\[
\Vab{u_n^-}_2 \leq \min_{t \in [0,1]} \Vab{ u_n - \gamma_n(t) }_2 \leq \min_{t \in [0,1]} \Vab{u_n - \gamma_n(t)}_{X} \to 0.
\]
Hence $u_\infty \geq 0$ holds. 
Furthermore, from $\sigma_n \to 0$ and \eqref{PS3}, there exists $\lambda_n \in \R$ such that 
\begin{equation}\label{eq:PSfull}
	\Vab{\partial_u J(0,u_n) + \lambda_n \int_{\RN} u_n \cdot \odif{x} } 
	=
	\Vab{-\Delta u_n -\Delta_q u_n + \lambda_n u_n - \alpha \vab{u_n}^{p-2} u_n}_{X^*} \to 0.
\end{equation}
Since $(u_n)_{n \in \N}$ is bounded in $X$, so is $(\lambda_n)_{n \in \N}$ in $\R$. 
Let $\lambda_n \to \lambda_\infty$. 
As in the proof of the proof of \cref{p:ex-loc-min}, we may verify that 
$(\lambda_\infty,u_\infty)$ satisfies \eqref{eq_main} and $P_{\alpha,\lambda_\infty} (u_\infty) = 0$ holds due to \cref{lem_Poho}.

\smallskip 

\noindent
\textbf{Step 2:}
\textsl{$u_n \to u_\infty$ strongly in $X$.}

\smallskip

We first remark that $\Vab{u_n - u_\infty}_r \to 0$ holds for any $r \in (2,q^*)$. 
In fact, when $N \geq 2$, this follows from the Radial Lemma in \cite{Li82}. 
On the other hand, when $N=1$, since $\min_{0 \leq t \leq 1} \Vab{u_n - \gamma_n(t)}_X \to 0$, 
there is a $t_n \in [0,1]$ such that $\Vab{u_n - \gamma_n(t_n)}_X \to 0$. 
Recalling that $\gamma_n(t_n)$ is nonincreasing in $[0,\infty)$ and $u_n \rightharpoonup u_\infty$ weakly in $X$, 
we infer that $\gamma_n(t_n) \rightharpoonup u_\infty$ weakly in $X$ and 
$\Vab{\gamma_n(t_n) - u_\infty }_\infty \to 0$ (see the argument in the proof of Claim 2 of \cref{prop:min}). 
Thus, $\Vab{u_n - u_\infty}_r \to 0$ holds for any $r \in (2,\infty]$ by Sobolev's embedding and $\Vab{\gamma_n(t_n) - u_n}_X \to 0$.

We next prove $u_\infty \not \equiv 0$. If $u_\infty \equiv 0$, then $\Vab{u_n}_p \to 0$. 
From $\sigma_n \to 0$ and 
\[
\begin{aligned}
	o(1) 
	&= \partial_\sigma J(\sigma_n,u_n) 
	\\
	&= \Vab{\nabla u_n}_2^2 + \ab( \frac{N}{2} + 1 - \frac{N}{q} ) \Vab{\nabla u_n}_q^q 
	- \alpha  N\ab( \frac{1}{2} - \frac{1}{p} ) \Vab{u_n}_p^p + o(1),
\end{aligned}
\]
we infer that $\Vab{\nabla u_n}_2 + \Vab{\nabla u_n}_q \to 0$. Therefore, 
the following contradiction occurs: $0 < c_{mp} = \lim_{n \to \infty} J(\sigma_n,u_n) = 0$. 
This yields $u_\infty \not \equiv 0$ and $0<\Vab{u_\infty}_2^2 \leq \lim_{n\to\infty} \Vab{u_n}_2^2 = m$.

Under assumption \eqref{A}, since $(\lambda_\infty,u_\infty)$ satisfies \eqref{eq_main} with $u_\infty \not \equiv 0$, 
either $\lambda_\infty > 0$ or $\lambda_\infty < 0$. 
When $\lambda_\infty > 0$, it follows from \eqref{eq_main} with $\lambda = \lambda_\infty$ that 
\[
\Vab{\nabla u_\infty}_2^2 + \Vab{\nabla u_\infty}_q^q + \lambda_\infty \Vab{u_\infty}_2^2 = \alpha \Vab{u_\infty}_p^p. 
\]
On the other hand, \eqref{eq:PSfull}, $\lambda_\infty > 0$ and the lower semicontinuity of norms lead to 
\[
\begin{aligned}
	\Vab{\nabla u_\infty}_2^2 + \Vab{\nabla u_\infty}_q^q + \lambda_\infty \Vab{u_\infty}_2^2 
	&\leq \liminf_{n \to \infty} 
	\Bab{\Vab{\nabla u_n}_2^2 + \Vab{\nabla u_n}_q^q + \lambda_\infty \Vab{u_n}_2^2 }
	\\
	&\leq \limsup_{n \to \infty} 
	\Bab{\Vab{\nabla u_n}_2^2 + \Vab{\nabla u_n}_q^q + \lambda_\infty \Vab{u_n}_2^2 }
	\\
	&= \limsup_{n \to \infty} 
	\Bab{  \partial_u J(0,u_n) u_n + \lambda_n \Vab{u_n}_2^2 + \alpha \Vab{u_n}_p^p } 
	\\
	&= \alpha \Vab{u_\infty}_p^p 
	= \Vab{\nabla u_\infty}_2^2 + \Vab{\nabla u_\infty}_q^q + \lambda_\infty \Vab{u_\infty}_2^2.
\end{aligned}
\]
In particular, $\Vab{\nabla u_n}_2^2 \to \Vab{\nabla u_\infty}_2^2$, $\Vab{\nabla u_n}_q^q \to \Vab{\nabla u_\infty}_q^q$ 
and $\Vab{u_n}_2^2 \to \Vab{u_\infty}_2^2$. 
Therefore, $\Vab{u_n -u_\infty}_X \to 0$.

We next consider the case $\lambda_\infty < 0$ and follow the argument in \cite[Proof of Theorem 1.5]{MeSz25}. 
By $P_{\alpha, \lambda_\infty} (u_\infty) = 0$ and 
\[
\Vab{\nabla u_\infty}_2^2 + \Vab{\nabla u_\infty}_q^q + \lambda_\infty \Vab{u_\infty}_2^2 - \alpha \Vab{u_\infty}^p_p = 0, 
\]
we have 
\[
0 = \frac{N}{2} \ab( \Vab{\nabla u_\infty}_2^2 + \Vab{\nabla u_\infty}_q^q + \lambda_\infty \Vab{u_\infty}_2^2 - \alpha \Vab{u_\infty}_p^p ) - P_{\alpha,\lambda_\infty} (u_\infty) 
=  Q_{\alpha,\lambda_\infty} (u_\infty). 
\]
On the other hand, letting $n \to \infty$ in $o_n(1) = \partial_{\sigma} J(\sigma_n,u_n)$ and noting $\Vab{u_n}_p \to \Vab{u_\infty}_p$ 
with the lower semicontinuity of norms yield $Q_{\alpha,\lambda_\infty} (u_\infty) \leq 0$. 
In particular, we deduce that $\Vab{\nabla u_n}_2^2 \to \Vab{\nabla u_\infty}_2^2$ and 
$\Vab{\nabla u_n}_q^q \to \Vab{\nabla u_\infty}_q^q$. 
Hence, $\nabla u_n \to \nabla u_\infty$ strongly in $L^2(\RN) \cap L^q(\RN)$. 
Finally, $\lambda_\infty < 0$, \eqref{eq:PSfull} and the fact that $(\lambda_\infty,u_\infty)$ is a solution of \eqref{eq_main} give 
\[
-\lambda_\infty m = \lim_{n \to \infty} \ab( \Vab{\nabla u_n}_2^2 + \Vab{\nabla u_n}_q^q - \alpha \Vab{u_n}_p^p ) 
= \Vab{\nabla u_\infty}_2^2 + \Vab{\nabla u_\infty}_q^q - \alpha \Vab{u_\infty}_p^p
= -\lambda_\infty \Vab{u_\infty}_2^2. 
\]
Thus, $\Vab{u_\infty}_2^2 = m$ and this gives $\Vab{u_n - u_\infty}_2 \to 0$.

Finally, since $u_\infty \in X_\rad$ is a solution of \eqref{eq_main}, 
\cref{l:reg} implies $u \in C^2(\RN)$. 
From \eqref{PS2} in the above, $u_\infty \geq 0$ in $\RN$. 
By the strong maximum principle, we get $u_\infty > 0$ in $\RN$ and complete the proof. 
\end{proof}

%

\section{Liouville Theorems}\label{SecLio}

The present section is devoted to the proof of \cref{nonex1}. 
Before the proof, we seek for the radial form of \eqref{eq_zero_mass}. Let $u \in X_\rad$ be a solution of \eqref{eq_zero_mass}. 
From 
\begin{equation*}
	\int_{\RN} \nabla u \cdot \nabla \varphi + \vab{\nabla u}^{q-2} \nabla u \cdot \nabla \varphi \odif{x} 
	= \int_{\RN} \alpha \vab{u}^{p-2} u \varphi \odif{x} \quad \text{for each $\varphi \in C^\infty_{c,\rad}(\RN)$},
\end{equation*}
it follows that 
\[
\int_0^\infty r^{N-1} u' \varphi' + r^{N-1} \vab{u'}^{q-2} u' \varphi' \odif{r} 
= \alpha \int_0^\infty \vab{u}^{p-2} u \varphi r^{N-1} \odif{r}.
\]
Therefore, the radial form of \eqref{eq_zero_mass} is 
\begin{equation}\label{eq_rad}
	-\ab( r^{N-1} \ab( 1 + \vab{u'}^{q-2} ) u' )' = 
	-\ab( r^{N-1} u' )' - \ab( r^{N-1} \vab{u'}^{q-2} u' )' = \alpha r^{N-1} \vab{u}^{p-2} u.
\end{equation}
This is rewritten as 
\begin{equation}\label{eq_rad-ver2}
	- \ab( 1 + (q-1)\vab{u'}^{q-2} ) u''(r) - \frac{N-1}{r} \ab( 1 + \vab{u'(r)}^{q-2} ) u'(r)  = \alpha \vab{u}^{p-2} u.
\end{equation}

\begin{proof}[Proof of \cref{nonex1}]
	(i) 
We exploit the argument in \cite[Proof of Lemma 3.12]{BaSo17} and \cite[Proof of Proposition 3.3]{IkTa19}. 
Let $u \in X_\rad \setminus \set{0}$ be a solution of \eqref{eq_main}. Remark that $u \in L^\infty(\RN)$. Now we divide the proof into intermediate claims.

\smallskip 

\noindent
\textbf{Claim 1:} 
\textsl{The number of zeros of $u$ in $(1,\infty)$ is finite.}

\smallskip 

Notice that by \cref{l:reg}, $u$ can be regarded as a classical solution to the following uniformly elliptic equation: 
\[
-\sum_{i,j=1}^N a_{ij} (x) u_{ij} = \alpha \vab{u}^{p-2} u, \quad 
a_{ij} (x) \coloneq \ab( 1 + \vab{\nabla u(x)}^{q-2} ) \delta_{ij} + (q-2) \vab{\nabla u(x)}^{q-2} \frac{u_i(x)}{\vab{\nabla u(x)}} \cdot \frac{u_j(x)}{\vab{\nabla u(x)}}.
\]
In particular, if $\pm u>0$ in $(a,b)$ and $u(b) = 0$ for some $0 \leq a < b$, then Hopf's lemma gives $\mp u'(b) > 0$. 
Thus, each zero of $u$ in $(0,\infty)$ is isolated. 

To prove Claim 1, we assume by contradiction that there exists $(r_n)_{n \in \N}$ such that 
$1<r_1 < r_2 < \dots < r_n < r_{n+1} < \cdots $, $u(r_n) = 0$ and $u \not \equiv 0$ in $(r_n,r_{n+1})$. 
Set $A_n \coloneq \Set{ x \in \RN | r_n < \vab{x} < r_{n+1} }$ and 
$v_n \coloneq \bm{1}_{A_n} u$ where $\bm{1}_A$ denotes the characteristic function of $A \subset \RN$. 
It is easily seen that $v_n \in W^{1,q}_0(A_n) \subset H^1_0(A_n) $ due to $q>2$. 
Furthermore, using $v_n$ as a test function to \eqref{eq_zero_mass} yields 
\[
\int_{\RN} \vab{\nabla v_n}^2 + \vab{\nabla v_n}^q \odif{x} = 
\int_{A_n} \vab{\nabla u}^2 + \vab{\nabla u}^q \odif{x} = \alpha \int_{A_n} \vab{u}^p \odif{x} 
= \alpha \int_{\RN} \vab{v_n}^p \odif{x}.
\]

Choose any $r \in (p_2,  \min \Bab{p,2^*} )$. Then, by the Gagliardo--Nirenberg inequality
\[
\alpha \int_{\RN} \vab{v_n}^p \odif{x} \leq \alpha \Vab{u}_\infty^{p-r} \Vab{v_n}_{r}^{r} 
\leq 
C \Vab{\nabla v_n}_2^{ r \nu_{r,2} } \Vab{v_n}_2^{ r(1-\nu_{r,2}) } 
\leq 
C \Vab{\nabla v_n}_2^{ r \nu_{r,2} } \Vab{u}_2^{ r(1-\nu_{r,2}) }
\]
and $r \nu_{r,2} > 2$, we end up with
\[
\begin{aligned}
	\Vab{\nabla v_n}_2^2  \le  \int_{\RN} \vab{\nabla v_n}^2 + \vab{\nabla v_n}^q \odif{x} 
	= 
	\alpha \int_{\RN} \vab{v_n}^p \odif{x} 
	\leq 
	C \Vab{\nabla v_n}_2^{r \nu_{r,2}} \Vab{u}_2^{r(1-\nu_{r,2})}.
\end{aligned}
\]
Thus, there exists $c>0$ such that 
\[
0 < c \leq \Vab{\nabla v_n}_2^{ r \nu_{r,2} - 2 } \quad \text{for each $n \geq 1$}. 
\]
In particular, 
\[
\Vab{\nabla u}_2^2 = \sum_{n=1}^\infty \Vab{\nabla u}_{L^2(A_n)}^2 = \sum_{n=1}^\infty \Vab{\nabla v_n}_2^2 = \infty,
\]
which is a contradiction. Thus, Claim 1 holds.

\smallskip 

\noindent
\textbf{Claim 2:} 
\textsl{$u \not \in L^2(\RN)$ by the comparison principle.}

\smallskip 

By Claim 1 and changing $u$ to $-u$ if necessary, we may assume that $u > 0$ in $(R_0,\infty)$, where $R_0 > 1$. 
If $r_0 \in (R_0,\infty)$ satisfies $u'(r_0) = 0$, then 
\eqref{eq_rad} and the fact $q>2$ yield  
\[
0 < \alpha r^{N-1}_0 u(r_0)^{p-1} = - (r^{N-1} u'(r) )'|_{r=r_0} - (r^{N-1} \vab{u'}^{q-2} u' )' |_{r=r_0} 
= -r_0^{N-1} u''(r_0).
\]
Thus, in $(R_0,\infty)$, there exists at most one local maximum point of $u$ and there is no other critical points. 
Since $u(r) \to 0$ as $r \to \infty$, we may find $R_1 \geq R_0 > 1$ such that $u'<0$ in $(R_1,\infty)$.

Let us divide the proof according to the dimension of the space $N$. 

\smallskip 

\noindent
\textbf{The case $N=1,2$:}
For $\e \in (0,1)$, we consider 
\[
\varphi(r) \coloneq \frac{\e}{\log r}. 
\]
Then 
\[
\begin{aligned}
	\varphi'(r) &= - \e (\log r)^{-2} r^{-1}, 
	\\ 
	r^{N-1} \vab{\varphi'(r)}^{q-2} \varphi'(r) 
	&= - r^{N-1-(q-1)} \e^{q-1} \ab( \log r )^{-2(q-1)} 
	= - r^{N-q} \e^{q-1} \ab( \log r)^{-2(q-1)}
\end{aligned}
\]
and 
\[
\begin{aligned}
	&- \ab( r^{N-1} \varphi' )' - \ab( r^{N-1} \vab{\varphi'}^{q-2} \varphi' )'
	\\
	= \ &
	\e \ab( r^{N-2}  \ab( \log r )^{-2}  )' + \e^{q-1} \ab( r^{N-q} \ab( \log r )^{-2(q-1)} )'
	\\
	= \ & 
	(N-2) \e r^{N-3} \ab( \log r  )^{-2} -2 \e r^{N-3} \ab( \log r )^{-3} 
	\\
	& +(N-q) \e^{q-1} r^{N-q-1} \ab( \log r )^{-2(q-1)} 
	-2(q-1) \e^{q-1} r^{N-q-1} \ab( \log r)^{-2q +1}
	\\
	= \ & \e r^{N-3} \ab( \log r )^{-3} 
	\ab[ (N-2) \log r - 2 + \e^{q-2} (N-q) r^{2-q} \ab( \log r )^{ -2q + 5 } -2(q-1) \e^{q-2} r^{2-q} \ab( \log r )^{-2q + 4}  ]. 
\end{aligned}
\]
Since $2<q$, $N-2 \leq 0$ and $ N-q < 0$, 
\[
-(r^{N-1} \varphi')' - \ab( r^{N-1} \vab{\varphi'}^{q-2} \varphi'  )' < - 2 \e r^{N-3} (\log r)^{-3} < 0 \quad \text{in $(R_1,\infty)$}. 
\]
Choose $\e \in (0,1)$ so that $\varphi(R_1) < u(R_1)$, set $v \coloneq u - \varphi$ and 
write $A(s) \coloneq s + \vab{s}^{q-2} s \in C^1(\R)$. Then the above inequality and \eqref{eq_rad} lead to  
\[
- \ab( r^{N-1} A ( \varphi') )' < 0 < \alpha r^{N-1} u^{p-1} = - \ab( r^{N-1} A(u') )' \quad \text{in} \ (R_1,\infty).
\]
Recalling $u'<0$ and $\varphi'< 0$ in $(R_1,\infty)$, we see that 
$A'( \varphi' + \theta (u'-\varphi') ) \in C^\infty((R_1,\infty))$ for any $\theta \in [0,1]$ and that 
\begin{equation}\label{eq_comp}
	0 < - \Bab{ r^{N-1} \ab[ A(u') - A(\varphi') ] }'
	= - \Bab{ r^{N-1} \int_0^1 A' \ab( \varphi' + \theta (u'-\varphi') ) \odif{\theta} \, v'  }'.
\end{equation}
Since $v(R_1) > 0$ and $v(r) \to 0$ as $r \to \infty$, if $\inf_{(R_r,\infty)} v < 0$, 
then we may find $r_0 \in (R_1,\infty)$ such that $v(r_0) = \min_{(R_1,\infty)} v < 0$. 
From $v'(r_0) = 0$, \eqref{eq_comp} with $r=r_0$ gives 
\[
0 < - r_0^{N-1} \int_0^1 A' \ab( \varphi' (r_0) + \theta (u' (r_0)-\varphi' (r_0) ) ) \odif{\theta} \, v''(r_0). 
\]
The fact $A' (s) =1 + (q-1) \vab{s}^{q-2} >0$ leads to $v''(r_0) < 0$, however, this contradicts $v(r_0) = \min_{(R_1,\infty)} v$. 
Thus, $\inf_{(R_1,\infty)} v \geq 0$ and $ u \geq \varphi$ in $(R_1,\infty)$. 
Therefore, we obtain the following contradiction: 
\[
\infty > \frac{\Vab{u}_2^2}{(2\pi)^{N-1}} \geq \int_{R_1}^\infty r^{N-1} u^2 (r) \odif{r} \geq \int_{R_1}^\infty \frac{\e^2 r^{N-1}}{\ab( \log r )^2} \odif{r} = \infty.
\]
From this argument, when $N=1,2$ and $q>2$, \eqref{eq_zero_mass} admits only the trivial solution. 

\smallskip

\noindent
\textbf{The case $N = 3,4$:} In this case, for $\e \in (0,1)$, we consider 
\[
\varphi(r) \coloneq \e r^{-(N-2)}. 
\]
Since 
\[
\varphi'(r) = - (N-2) \e r^{-(N-1)}, \quad -r^{N-1} \vab{\varphi'(r)}^{q-2} \varphi'(r) = (N-2)^{q-1} \e^{q-1} r^{- (N-1)(q-2) },
\]
it follows that 
\[
\begin{aligned}
	-\ab( r^{N-1} \varphi' )' - \ab( r^{N-1} \vab{\varphi'}^{q-2} \varphi' )' 
	= 
	- (N-2)^{q-1} \e^{q-1} (N-1) (q-2)  r^{ -(N-1) (q-2)  - 1} < 0.
\end{aligned}
\]
Arguing as in the case $N=2$, we can prove that $u \geq \varphi$ in $(R_1,\infty)$. 
When $N=3,4$, from 
\[
\int_{R_1}^\infty r^{N-1} \varphi^2 \odif{r} = \e^2 \int_{R_1}^\infty r^{ - N + 3 } \odif{r} = \infty,
\]
it follows that $\varphi, u \not \in L^2(\RN)$ and this is a contradiction.

(ii) 
We treat the case $N \geq 3$, $2 < q < \infty$ and $2 < p \leq 2^*$. 
Let $u \in X$ be a solution to \eqref{eq_rad}. 
Then in view of \cref{lem_Poho} and $\Vab{\nabla u}_2^2 + \Vab{\nabla u}_q^q = \alpha \Vab{u}_p^p$, we obtain 
\[
0 = \frac{N-2}{2N} \Vab{\nabla u}_2^2 + \frac{N-q}{qN} \Vab{\nabla u}_q^q - \frac{\alpha}{p} \Vab{u}_p^p 
= \ab( \frac{1}{2^*} - \frac{1}{p} ) \Vab{\nabla u}_2^2 + \ab( \frac{1}{q} - \frac{1}{N} - \frac{1}{p} ) \Vab{\nabla u}_q^q. 
\]
From $2< p \leq 2^*$ and $2<q < \infty$, it is easily seen that 
\[
\frac{1}{2^*} - \frac{1}{p} \leq 0, \quad \frac{1}{q} - \frac{1}{N} - \frac{1}{p} < 0.
\]
Therefore, $\Vab{\nabla u}_q = 0$ holds, which yields $u \equiv 0$ by $u \in X$. 
Notice that the above argument is valid when $N=1,2$, $2<p,q<\infty$ and we omit the details. 
\end{proof}

\section{Multiplicity of solutions in the zero mass case}\label{MountZero}

The aim of this section is to prove \cref{t:zeromass}. 
Suppose $N \geq 3$, $q>2$, $p \in (2^*,q^*)$ and $\alpha > 0$. 
Let us define 
\[
\begin{aligned}
	\wh{X} &\coloneq \Set{ u \in \cD^{1,2} (\RN) | \,\,|\nabla u| \in L^q(\RN) }, \quad 
	\Vab{u}_{\wh{X}} \coloneq \Vab{\nabla u }_{2} + \Vab{\nabla u}_q,\\
	\wh{X}_\rad &\coloneq \Set{ u \in \wh{X} | \text{ $u$ is radially symmetric}}.
\end{aligned}
\]
Since $\wh{X} \subset L^r(\RN)$ holds for $r \in [2^*,q^*)$, 
$E_\alpha \in C^1(\wh{X} , \R)$. 
To find solutions to \eqref{eq_zero_mass}, 
we show the existence of critical points of $E_\alpha$ on $\wh{X}_\rad$. 
Notice that there is no constraint on the $L^2$-norm of solutions and hence we consider $E_\alpha$ on the entire space $\wh{X}_\rad$.

To find infinitely many critical points of $E_\alpha$ on $\wh{X}_\rad$ under the conditions above, 
we shall use \cite[Theorem 1.7]{ByIkMaMa24}. 
Define 
\[
\Psi(u) \coloneq \frac{1}{2} \Vab{\nabla u}_2^2 + \frac{1}{q} \Vab{\nabla u}_q^q \in C^1( \wh{X}_\rad , \R ), \quad 
\Phi(u) \coloneq  \frac{\alpha}{p} \Vab{u}_p^p \in C^1( \wh{X}_\rad , \R ).
\]
For $\lambda \in [1-\e,1]$ with $0 < \e \ll 1$, set 
\[
E_{\lambda,\alpha} (u) \coloneq \lambda \Psi(u) - \Phi(u) \in C^1( \wh{X}_\rad , \R ). 
\]
Of course $E_{1,\alpha}=E_\alpha$. 
To exploit \cite[Theorem 1.7]{ByIkMaMa24}, we consider the following conditions on $\Psi$ and $\Phi$: 

\begin{enumerate}[label=($\Psi$\arabic*),ref=$\Psi$\arabic*]
	\item \label{Psi1}
	$\Psi(0) = 0$ and 
	$\Psi$ is lower semicontinuous and convex;
	
	\item \label{Psi2}
	$\Psi(u) \to \infty$ as $\Vab{u}_{\wh{X}} \to \infty$;
	
	\item \label{Psi3}
	If $u_j \rightharpoonup u_\infty$ weakly in $\wh{X}_\rad$ and $\Psi(u_j) \to \Psi(u_\infty)$, then 
	$\Vab{u_j - u_\infty}_{\wh{X}} \to 0$;
\end{enumerate}

\begin{enumerate}[label=($\Phi$\arabic*),ref=$\Phi$\arabic*]
	\item \label{Phi1}
	$\Phi \in C^1(\wh{X}_\rad  , \R  )$;
	
	\item \label{Phi2}
	If $u_n \rightharpoonup u_\infty$ weakly in $\wh{X}_\rad$, then 
	\[
	\liminf_{n \to \infty} \Phi'(u_n) (v-u_n) \geq \Phi'(u_\infty) (v-u_\infty) \quad \text{for any $v \in \wh{X}_\rad$};
	\]
\end{enumerate}

\begin{enumerate}[label=(IB),ref=IB]
	\item \label{IB}
	For any $0 < a \leq b < \infty$ and $c \in \R$, the set 
	\[
	\mathcal{K}^c_{[a,b]} \coloneq \Set{ u \in \wh{X}_\rad | \text{for some $\lambda \in [a,b]$, $E_{\lambda,\alpha} '(u) = 0$ and $E_{\lambda,\alpha}(u) \leq c$} }
	\]
	is bounded in $\wh{X}_\rad$;
\end{enumerate}

\begin{enumerate}[label=(E),ref=E]
	\item \label{E}
	$\Psi$ and $\Phi$ are even; 
\end{enumerate}

\begin{enumerate}[label=(SMP),ref=SMP]
	\item \label{SMP}
	there exist $\rho_0 > 0$, $\overline{\e} > 0$ and odd maps $\pi_{0,k} \in C(\partial D^k, \wh{X}_\rad)$ for each $k \in \N$ 
	where $D^k \coloneq \Set{ \sigma \in \R^k | \vab{\sigma} < 1 }$ such that 
	\[
	\begin{aligned}
		&\inf \Set{ E_{1-\overline{\e} , \alpha } (u) | u \in \wh{X}_\rad, \ \Vab{u}_{\wh{X}} = \rho_0   } > 0, 
		\\
		&\min_{\sigma \in \partial D^k } \Vab{\pi_{0,k} (\sigma)}_{\wh{X}} > \rho_0, \quad 
		\sup_{ \sigma \in \partial D^k } E_{1+\overline{\e},\alpha} ( \pi_{0,k} (\sigma) ) < 0.
	\end{aligned}
	\]
\end{enumerate}

Then the following result is obtained in \cite[Theorem 1.7]{ByIkMaMa24}: 

\begin{theorem}\label{theoMountzero}
Assume \eqref{Psi1}, \eqref{Psi2}, \eqref{Psi3}, \eqref{Phi1}, \eqref{Phi2}, \eqref{IB}, \eqref{E} and \eqref{SMP}. 
Then $E_{1,\alpha}$ admits infinitely many critical points $(u_k)_k\subset \wh{X}_\rad$ with $E_{1,\alpha}(u_k)\to\infty$ as $k\to\infty$.
\end{theorem}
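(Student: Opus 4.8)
The plan is to observe that this is precisely the abstract multiplicity statement of \cite[Theorem 1.7]{ByIkMaMa24}, so strictly speaking the proof amounts to checking that the hypotheses \eqref{Psi1}--\eqref{Psi3}, \eqref{Phi1}--\eqref{Phi2}, \eqref{IB}, \eqref{E} and \eqref{SMP} match theirs and invoking that result directly. If one instead wishes to reconstruct the argument, the strategy combines the monotonicity trick of Jeanjean \cite{Je97} with a $\Z_2$-equivariant minimax scheme based on the genus, adapted to the weakly compact setting encoded in the structural conditions above.

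First I would set up symmetric minimax values. Using the evenness hypothesis \eqref{E} together with \eqref{SMP}, which provides odd maps $\pi_{0,k} \in C(\partial D^k, \wh{X}_\rad)$ with the prescribed sign conditions, one defines for each $k \in \N$ an admissible class of symmetric sets generated by these maps and sets $c_k(\lambda) \coloneq \inf \sup E_{\lambda,\alpha}$ over that class. The mountain-pass geometry in \eqref{SMP}---a positive infimum on the sphere $\Vab{u}_{\wh{X}} = \rho_0$ together with negative values on $\partial D^k$---guarantees that the $c_k(\lambda)$ are well defined and positive, while a standard genus/intersection estimate gives $c_k(\lambda) \to \infty$ as $k \to \infty$.

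Next I would apply the monotonicity trick. Since $\Psi \geq 0$, each map $\lambda \mapsto c_k(\lambda)$ is nonincreasing on $[1-\e,1]$, hence differentiable for almost every $\lambda$. At any such differentiability point a deformation argument yields a \emph{bounded} Palais--Smale sequence for $E_{\lambda,\alpha}$ at level $c_k(\lambda)$, the boundedness of $\Psi$ along the sequence being controlled by the difference quotient of $c_k$. Passing to a weak limit $u_n \rightharpoonup u_\infty$ in $\wh{X}_\rad$, condition \eqref{Phi2} forces $E_{\lambda,\alpha}'(u_\infty) = 0$, and then \eqref{Psi3} upgrades weak to strong convergence once $\Psi(u_n) \to \Psi(u_\infty)$; this produces, for a sequence $\lambda_j \uparrow 1$ and each fixed $k$, a critical point of $E_{\lambda_j,\alpha}$ at level $c_k(\lambda_j)$.

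Finally I would let $\lambda_j \to 1$: condition \eqref{IB} keeps the resulting family bounded in $\wh{X}_\rad$, so a further application of \eqref{Phi2} and \eqref{Psi3} delivers a critical point of $E_{1,\alpha} = E_\alpha$ at level $c_k(1)$, and $c_k(1) \to \infty$ yields infinitely many critical points with diverging energy. The main obstacle throughout is the genuine lack of compactness caused by the term $\tfrac{1}{q}\Vab{\nabla u}_q^q$ in $\Psi$: weak convergence in $\wh{X}$ does not by itself control the $L^q$-norm of gradients. Conditions \eqref{Psi3} and \eqref{Phi2} are engineered precisely to play the role of an $(S_+)$-type property, and invoking them correctly at both the almost-every-$\lambda$ stage and the final limit $\lambda \to 1$ is the delicate point.
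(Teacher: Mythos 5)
Your proposal matches the paper's approach exactly: the paper offers no proof of this theorem at all, but states it as a direct quotation of \cite[Theorem 1.7]{ByIkMaMa24}, which is precisely what your first paragraph proposes (the verification of the hypotheses \eqref{Psi1}--\eqref{SMP} is then carried out separately in the proof of \cref{t:zeromass}, not here). Your additional sketch of the monotonicity-trick and symmetric minimax machinery is a reasonable outline of how the cited abstract result is established, but it is not part of, nor needed for, the paper's argument.
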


\begin{proof}[Proof of \cref{t:zeromass} (the existence of solutions)]
By \cref{theoMountzero}, it suffices to verify \eqref{Psi1}, \eqref{Psi2}, \eqref{Psi3}, \eqref{Phi1}, \eqref{Phi2}, \eqref{IB}, \eqref{E} and \eqref{SMP}. 
With regard to \eqref{Psi1}, \eqref{Psi2}, \eqref{Phi1} and \eqref{E}, these are easy to check. 
In what follows, we check \eqref{Psi3}, \eqref{Phi2}, \eqref{IB} and \eqref{SMP}.

\noindent
\textbf{Proof of \eqref{Psi3}:} 
Let $u_j \rightharpoonup u_\infty$ weakly in $\wh{X}_\rad$ and $\Psi(u_j) \to \Psi(u_\infty)$. 
The weak lower semicontinuity of norms yields $\Vab{\nabla u_\infty}_2^2 \leq \liminf_{n \to \infty} \Vab{\nabla u_n}_2^2$ 
and $\Vab{\nabla u_\infty}_q^q \leq \liminf_{n \to \infty} \Vab{\nabla u_n}_q^q$. 
From the uniform convexity of $L^2$ and $L^q$, it suffices to show 
\begin{equation}\label{eq:conv-norm}
	\Vab{\nabla u_n}_2^2 \to \Vab{\nabla u_\infty}_2^2, \quad \Vab{\nabla u_n}_q^q \to \Vab{\nabla u_\infty}_q^q. 
\end{equation}
By 
\[
\Psi(u_\infty) = \frac{1}{2} \Vab{\nabla u_\infty}_2^2 + \frac{1}{q} \Vab{\nabla u_\infty}_q^q 
\leq \frac{1}{2} \liminf_{n \to \infty} \Vab{\nabla u_n}_2^2 + \frac{1}{q} \liminf_{n \to \infty} \Vab{\nabla u_n}_q^q 
\leq \liminf_{n \to \infty} \Psi(u_n) = \Psi(u_\infty),
\]
we deduce that 
\[
\Vab{\nabla u_\infty}_2^2 = \liminf_{n \to \infty} \Vab{\nabla u_n}_2^2, \quad \Vab{\nabla u_\infty}_q^q = \liminf_{n \to \infty} \Vab{\nabla u_n}_q^q. 
\]
If $\Vab{\nabla u_\infty}_2^2 = \liminf_{n \to \infty} \Vab{\nabla u_n}_2^2 < \limsup_{n \to \infty} \Vab{\nabla u_n}_2^2$, 
then by choosing a subsequence $(u_{n_k})_{k \in \N}$ so that 
$\Vab{\nabla u_{n_k}}_2^2 \to \limsup_{n \to \infty} \Vab{\nabla u_n}_2^2$, we get the following contradiction: 
\[
\Psi(u_\infty) < \frac{1}{2} \lim_{k \to \infty} \Vab{\nabla u_{n_k}}_2^2 + \frac{1}{q} \liminf_{k \to \infty} \Vab{\nabla u_{n_k}}_q^q 
\leq \limsup_{k \to \infty} \Psi(u_{n_k}) = \Psi(u_\infty).
\]
Hence, $\Vab{\nabla u_n}_2^2 \to \Vab{\nabla u_\infty}_2^2$. Similarly, $\Vab{\nabla u_n}_q^q \to \Vab{\nabla u_\infty}_q^q$ may be obtained 
and \eqref{eq:conv-norm} holds.

\noindent
\textbf{Proof of \eqref{Phi2}:}
Let $u_n \rightharpoonup u_\infty$ weakly in $\wh{X}_\rad$. According to \cite{Li82}, 
$u_n \to u_\infty$ strongly in $L^r(\RN)$ for any $r \in (2^*,q^*)$. 
Therefore, 
\[
\begin{aligned}
	\Phi'(u_n) (v-u_n) &= \alpha \int_{\RN} \vab{u_n}^{p-2} u_n (v - u_n) \odif{x}   
	\\
	&\to \alpha \int_{\RN} \vab{u_\infty}^{p-2} u_\infty (v- u_\infty) \odif{x} = \Phi'(u_\infty) (v-u_\infty),
\end{aligned}
\]
for any $v\in \wh{X}_\rad$ which completes the proof.

\noindent
\textbf{Proof of \eqref{IB}:} 
Let $u \in \mathcal{K}^c_{[a,b]}$. Since $u \in \wh{X}_\rad$ is a weak radial solution to 
\[
\lambda \ab( -\Delta u - \Delta_q u ) = \alpha \vab{u}^{p-2} u \quad \text{in} \ \RN,
\]
the Pohozaev identity (\cref{lem_Poho}) holds: 
\[
0= \lambda \frac{N-2}{2N} \Vab{\nabla u}_2^2 + \lambda \frac{N-q}{Nq} \Vab{\nabla u}_q^q - \frac{\alpha}{p} \Vab{u}_p^p 
= E_{\lambda,\alpha} (u) - \frac{\lambda}{N} K(u) .
\]
From $\lambda \in [a,b]$ and 
\[
c \geq E_{\lambda,\alpha}(u) 
= \frac{\lambda}{N} K(u), 
\]
$\mathcal{K}^c_{[a,.b]}$ is bounded in $ \wh{X}_\rad$. 

\noindent
\textbf{Proof of \eqref{SMP}:} We divide the proof into points.

\noindent
\textbf{The existence of $\rho_0$ when $q<N$:}
By $2<q<N$ and $2^*<p < q^*$, since 
\[
\vab{s}^p \leq \vab{s}^{2^*} + \vab{s}^{q^*} \quad \text{for all $s \in \R$}, 
\]
the fact $2<q<N$ and Sobolev's inequality yield 
\begin{equation*}
	\Vab{u}_p^p \leq \Vab{u}_{2^*}^{2^*} + \Vab{u}_{q^*}^{q^*} \leq C \Vab{\nabla u}_2^{2^*} + C \Vab{\nabla u}_q^{q^*}.
\end{equation*}
Hence, if $\Vab{u}_{\wh{X}} = \rho_0$, then 
\[
\begin{aligned}
	E_{1-\bar{\e}, \alpha}(u) 
	&\geq 
	\frac{1}{2} \ab[ (1-\e) - C \Vab{\nabla u}_2^{2^*-2} ] \Vab{\nabla u}_2^2 + \frac{1}{q} \ab[ (1-\e) - C \Vab{\nabla u}_q^{q^*-q} ] \Vab{\nabla u}_q^q 
	\\
	&\geq 
	\frac{1}{2} \ab[ (1-\e) - C \rho_0^{ (2^*-2)/2 } ] \Vab{\nabla u}_2^2 + \frac{1}{q} \ab[ (1-\e) - C \rho_0^{ (q^*-q)/q } ] \Vab{\nabla u}_q^q.
\end{aligned}
\]
Since $\bar{\e}>0$ can be small, if $\rho_0 > 0$ is sufficiently small, then it is easily seen that 
\[
\inf_{ \Vab{u}_{\wh{X}} = \rho_0 } E_{1-\bar{\e},\alpha}(u) 
\geq \inf_{ \Vab{u}_{\wh{X}} = \rho_0 } \frac{1}{2q} \ab( \Vab{\nabla u}_2^2 + \Vab{\nabla u}_q^q ) > 0. 
\]

\noindent
\textbf{The existence of $\rho_0$ when $q \geq N$:}
Fix $r \in (2,N)$ so that $\max \set{p,q} < r^* = Nr/(N-r)$. From 
\[
\vab{s}^p \leq \vab{s}^{2^*} + \vab{s}^{r^*} \quad \text{for each $s \in \R$}, 
\]
it follows that 
\[
\Vab{u}_p^p \leq \Vab{u}_{2^*}^{2^*} + \Vab{u}_{r^*}^{r^*}  \leq C \Vab{\nabla u}_2^{2^*} + C \Vab{\nabla u}_r^{r^*}. 
\]
Choose $\theta \in (0,1)$ with $1/r = \theta/2 + (1-\theta)/q$. The interpolation and Young's inequality give 
\[
\Vab{\nabla u}_r \leq \Vab{\nabla u}_2^{\theta} \Vab{\nabla u}_q^{1-\theta} 
\leq \theta \Vab{\nabla u}_2 + (1-\theta) \Vab{\nabla u}_q. 
\]
Hence, 
\[
\Vab{u}_p^p \leq C \Vab{\nabla u}_2^{2^*} + C \ab( \Vab{\nabla u}_2^{r^*} + \Vab{\nabla u}_q^{r^*} ). 
\]
By $2<2^*$ and $2<q<r^*$, the rest of the argument is similar to the proof in the above.

\noindent
\textbf{The existence of $\pi_{0,k}$:}
Let $k \in \N$ and $\varphi_{j_1},\dots, \varphi_{j_k}\in C^\infty_{c,\rad}( \RN )$ satisfy 
\[
\supp \varphi_{j_1} \cap \supp \varphi_{j_2} = \emptyset \ \text{if $j_1 \neq j_2$}, \quad \Psi(\varphi_j) = 1 \quad \text{for each $j=1,\dots,k$}.
\]
For $\sigma \in \partial D^k$, set 
\[
\wt{\pi}_{k} (\sigma) \coloneq \sum_{j=1}^k \sigma_j \varphi_j \in C(\partial D^k, \wh{X}_\rad).
\]
It is easily seen that $\wt{\pi}_k$ is odd and 
\[
\beta_k \coloneq \min_{\sigma \in \partial D^k } \Vab{ \wt{\pi}_{k} (\sigma)  }_p^p > 0, 
\quad \gamma_k \coloneq \max_{\sigma \in \partial D^k} \ab(  \Vab{ \nabla  \wt{\pi}_k(\sigma) }_2^2 + \Vab{ \nabla  \wt{\pi}_k(\sigma) }_q^q )  < \infty. 
\]
For $R \gg 1$, consider 
\[
\pi_{0,k} (\sigma) (x) \coloneq \wt{\pi}_k (\sigma) \ab( \frac{x}{R} ). 
\]
Then $\pi_{0,k} \in C(\partial D^k, \wh{X}_\rad)$ is an odd map and satisfies 
\[
\begin{aligned}
	E_{1+\overline{\e}, \alpha } ( \pi_{0,k} (\sigma) ) 
	&= (1+\overline{\e}) \Bab{ \frac{R^{N-2}}{2} \Vab{\nabla  \wt{\pi}_k (\sigma) }_2^2 + \frac{R^{N-q}}{q} \Vab{ \nabla  \wt{\pi}_k(\sigma) }_q^q  } 
	- \frac{\alpha}{p} R^N \Vab{  \wt{\pi}_k(\sigma) }_p^p
	\\
	&\leq 
	\ab( 1 + \overline{\e} ) \ab( R^{N-2} + R^{N-q} ) \gamma_k- \frac{\alpha}{p} \beta_k R^N.
\end{aligned}
\]
Since $N \geq 3$ and 
\[
\min_{\sigma \in \partial D^k } \Vab{\nabla \pi_{0,k} (\sigma)}_2^2 \to \infty \quad \text{as $R \to \infty$},
\]
for a sufficiently large $R\gg1$, we obtain 
\[
\max_{\sigma \in \partial D^k}  E_{1+\overline{\e}} ( \pi_{0,k} (\sigma) ) < 0, \quad 
\min_{\sigma \in \partial D^k } \Vab{\pi_{0,k} (\sigma)}_{\wh{X}} > \rho_0.
\]
Hence \eqref{SMP} holds. 
\end{proof}

To complete the proof of \cref{t:zeromass}, we shall derive the following decay estimates of 
solutions to \eqref{eq_zero_mass} in $\wh{X}_\rad$:

\begin{proposition}\label{prop_decay}
	Assume $N \geq 3$, $2 < q < \infty$, $2^* < p < q^*$ and $u \in \wh{X}_{rad}$ is a solution to \eqref{eq_zero_mass}.
	Then there exists $C>0$ such that $\vab{u(r)} \leq C r^{-(N-2)}$ for all $r \geq 1$. 
	In particular, when $N \geq 5$, $u \in L^2(\RN)$. 
    \end{proposition}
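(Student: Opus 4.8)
The plan is to recast \eqref{eq_zero_mass} in its radial first–order (flux) form and then run a bootstrap on the pointwise decay rate of $u$, the engine being that the full quasilinear flux dominates its linear part, so that the $q$-Laplacian never obstructs the argument. First I would record the starting point: by \cref{l:reg}, $u \in C^2(\RN)$, so $u$ is a classical radial solution of \eqref{eq_rad} with $u'(0)=0$; moreover the elementary radial estimate $\vab{u(r)} \leq C\Vab{\nabla u}_2\, r^{-(N-2)/2}$, obtained from $u(r) = -\int_r^\infty u'(s)\,\odif{s}$ and Cauchy–Schwarz (valid since $u\in\cD^{1,2}_\rad(\RN)$ forces $u(r)\to 0$), supplies both the decay at infinity and the initial rate $\beta_0 := (N-2)/2$.

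Next I set $A(s) := s + \vab{s}^{q-2}s$ and $P(r) := r^{N-1}A(u'(r))$, so that \eqref{eq_rad} reads $-P'(r) = \alpha r^{N-1}\vab{u}^{p-2}u$. Integrating from the origin, where $P(0)=0$ because $u'(0)=0$, gives
\[
\vab{P(r)} \leq \alpha \int_0^r s^{N-1}\vab{u(s)}^{p-1}\,\odif{s} =: Q(r).
\]
The key elementary observation is that $\vab{A(s)} = \vab{s}\ab(1+\vab{s}^{q-2}) \geq \vab{s}$ for $q>2$, so that $\vab{u'(r)} \leq \vab{A(u'(r))} = r^{1-N}\vab{P(r)} \leq r^{1-N}Q(r)$ \emph{without any need to invert the nonlinear map $A$}. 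Together with $u(r) = -\int_r^\infty u'(s)\,\odif{s}$ this decouples the decay of $u$ from the quasilinear structure and reduces everything to controlling $Q$.

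Then comes the bootstrap. Assuming $\vab{u(r)} \leq C r^{-\beta}$ on $[1,\infty)$ for some $\beta$ in the admissible window $\ab(2/(p-1),\,N/(p-1))$, I would estimate $Q(r) \leq C r^{N-\beta(p-1)}$, whence $\vab{u'(r)} \leq C r^{1-\beta(p-1)}$ and, after integration, $\vab{u(r)} \leq C r^{-(\beta(p-1)-2)}$. Thus the rate improves through the affine map $\beta \mapsto \beta(p-1)-2$, whose fixed point is $2/(p-2)$ and whose slope $p-1>1$ makes it expanding above that fixed point. The arithmetic driving the whole scheme is $\beta_0 = (N-2)/2 > 2/(p-2)$, i.e. $(N-2)(p-2)>4$, which is \emph{exactly} equivalent to $p>2^*$; hence the iterates strictly increase and, being geometrically expanding, after finitely many steps exceed $N/(p-1)$. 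At that moment $Q(\infty)<\infty$, so $\vab{P(r)}$ is bounded, $\vab{u'(r)} \leq C r^{1-N}$, and a last integration yields $\vab{u(r)} \leq C r^{-(N-2)}$ on $[1,\infty)$. The $L^2$-claim for $N\geq 5$ then follows since $\int_1^\infty r^{N-1}r^{-2(N-2)}\,\odif{r} = \int_1^\infty r^{3-N}\,\odif{r}<\infty$ precisely when $N>4$.

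The main obstacle will be the careful bookkeeping of the bootstrap: keeping each $\beta_n$ inside the window $\ab(2/(p-1),\,N/(p-1))$, confirming the monotone geometric growth of the iterates, and disposing of the borderline case $\beta_n(p-1)=N$ (where a logarithm appears) by decreasing the working exponent an arbitrarily small amount. The delicate quantitative point is the equivalence $(N-2)(p-2)>4 \Leftrightarrow p>2^*$, which is what places the starting rate strictly above the fixed point $2/(p-2)$; at $p=2^*$ the two coincide, the recursion stalls at $\beta_0=(N-2)/2$, and the rate $N-2$ becomes unreachable, in agreement with the sharpness recorded in \cref{Rem:sharpness}.
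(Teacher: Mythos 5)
Your proposal is correct and takes essentially the same route as the paper's proof: the starting rate $(N-2)/2$ from the radial Cauchy--Schwarz estimate, the observation that $|A(s)|=|s|(1+|s|^{q-2})\ge |s|$ so the flux bound gives $|u'(r)|\le r^{1-N}Q(r)$ without inverting $A$, and the bootstrap $\beta\mapsto(p-1)\beta-2$ (the paper's recursion $a_{n+1}=(p-1)a_n-2$ with $a_0=(N-2)/2$), whose expansion above the fixed point $2/(p-2)$ is exactly the condition $p>2^*$, with the same treatment of the logarithmic borderline and the same final integration yielding $|u(r)|\le Cr^{-(N-2)}$ and $u\in L^2(\mathbf{R}^N)$ for $N\ge 5$. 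The only cosmetic difference is that by integrating the flux from the origin, where $P(0)=0$, you bypass the paper's auxiliary step proving $u'(r)\to 0$ via a monotone energy function, and you need the radial pointwise estimate only for the regular solution $u$ rather than for all of $\widehat{X}_{\mathrm{rad}}$ by density.
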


\begin{remark}\label{Rem:sharpness}
We remark that to obtain the decay estimate $\vab{u(r)} \leq Cr^{-N+2}$ as in \cref{prop_decay}, 
the condition $p>2^*$ is sharp when $N \geq 5$. 
In fact, let $p=2^*$, $N \geq 5$ and $u \in \wh{X}_\rad \setminus \set{0}$ be a critical point of $E_\alpha$. 
Then \cref{nonex1} (ii) asserts $u \not \in X$ and the estimate $\vab{u(r)} \leq C r^{-N+2}$ cannot be obtained; 
otherwise $u \in L^2(\RN)$ and $u \in X$ hold. 
\end{remark}

\begin{proof}[Proof of \cref{prop_decay}] 
	We divide the proof into intermediate claims.
	
	\smallskip 
	
	\noindent
	\textbf{Claim 1:} \textsl{The space $C^\infty_{c,\rad} (\RN)$ is dense in $\wh{X}_\rad$.}
	
	\smallskip 
	
	Choose $u \in \wh{X}_\rad$ arbitrarily and let $\zeta_1 \in C^\infty_{c,\rad}(\RN)$ satisfy 
	\[
	0 \leq \zeta_1 \leq 1, \quad \zeta_1 = 1 \quad \text{in} \ B_1, \quad \zeta_1 = 0 \quad \text{in} \ \RN \setminus B_2.
	\]
	Set $\zeta_n(x) \coloneq \zeta_1(x/n)$ and $u_n \coloneq \zeta_n u$. 
	Since each $u_n$ has compact support, it can be approximated by functions in $C^\infty_{c,\rad}(\RN)$. 
	Hence, it suffices to show $\Vab{\nabla u_n - \nabla u}_{L^2\cap L^q} \to 0$. 
	To this end, from $\nabla u_n = (\nabla \zeta_n) u + \zeta_n \nabla u$ and the dominated convergence theorem, 
	it suffices to show 
	\begin{equation}\label{conv-rem}
		\int_{\RN} \vab{\nabla \zeta_n}^2 \vab{u}^2 + \vab{\nabla \zeta_n}^q \vab{u}^q \odif{x} \to 0.
	\end{equation}
	When $2 \leq q \leq 2^*$ (the case $q=2$ is included), 
	\[
	\int_{\RN}\vab{\nabla \zeta_n}^q \vab{u}^q \odif{x} \leq \frac{C}{n^q} \int_{n\leq \vab{x} \leq 2n } \vab{u}^q \odif{x} 
	\leq \frac{C}{n^q} \ab( \int_{ n \leq \vab{x} \leq 2n } \vab{u}^{2^\ast} \odif{x}  )^{q/2^\ast} 
	\vab{ B_{2n} }^{ 1 - q/2^\ast }.
	\]
	Notice that $\vab{B_{2n}}^{1-q/2^*} \leq C n^{ N - Nq/2^\ast }$ and 
	$N - Nq/2^* - q \leq 0$ due to $q \geq 2$. 
	From
	\[
	\ab( \int_{ n \leq \vab{x} \leq 2n } \vab{u}^{2^\ast} \odif{x}  )^{q/2^\ast}  \to 0,
	\]
	when $2 \leq q \leq 2^*$, \eqref{conv-rem} holds.

	On the other hand, when $2^* < q$, choose $s \in (2,q)$ so that $s^* = q$ where $1/s^* \coloneq 1/s - 1/N$. 
	Sobolev's inequality with the interpolation inequality ($\Vab{\nabla u}_{s} \leq \Vab{\nabla u}_2^\theta \Vab{\nabla u}_q^{1-\theta} \leq \Vab{u}_{\wh{X}} $)
	leads to 
	\[
	\int_{\RN} \vab{\nabla \zeta_n}^q \vab{u}^q \odif{x} 
	\leq C n^{-q} \int_{\RN} \vab{u}^{s^*} \odif{x} 
	\leq C n^{-q}  \Vab{\nabla u}_s^{s^*} \leq C n^{-q} \Vab{u}_{\wh{X}}^{s^*} \to 0.
	\]
	Hence \eqref{conv-rem} holds in this case, too. Thus we complete the proof. 
	
\smallskip 
	
	\noindent
	\textbf{Claim 2:} \textsl{There exists $C>0$ such that for all $u \in \wh{X}_\rad$ and $r > 0$, 
		\begin{equation}\label{eq_decay}
			\vab{u(r)} \leq C r^{ - (N-2)/2  } \Vab{u}_{\wh{X}}.
		\end{equation}
	}

\smallskip 
	
	It suffices to prove \eqref{eq_decay} for $u \in C^\infty_{c,\rad} (\RN)$. 
	Set $\beta \coloneq (2^*+2)/2 = 2(N-1)/(N-2) $. Then for any $r \in (0,\infty)$, 
	\[
	\begin{aligned}
		\vab{u(r)}^\beta = \int_r^\infty - \odv*{\vab{u(s)}^\beta }{s} \odif{s} 
		&\leq 
		C \int_r^\infty \vab{u(s)}^{\beta -1} \vab{u'(s)} \odif{s} 
		\\
		&=\int_r^\infty \Bab{\vab{u(s)}^{\beta -1} s^{-(N-1)/2}} \Bab{ s^{(N-1)/2}  \vab{u'(s)} } \odif{s}
		\\
		&	\leq 
		C \ab( \int_r^\infty s^{-2(N-1)}  s^{N-1} \vab{u(s)}^{2(\beta -1)} \odif{s}  )^{1/2} \Vab{\nabla u }_{L^2(\RN \setminus B_r)}
		\\
		& \leq C r^{-(N-1)} \Vab{u}_{2^*}^{ N/(N-2) } \Vab{\nabla u}_{L^2 (\RN \setminus B_r)} 
		\\
		& \leq C r^{-(N-1)} \Vab{\nabla u}_2^{ (2N-2)/(N-2) }
	\end{aligned}
	\]
	where we also used H\"older's inequality.
	Hence, 
	\[
	\vab{u(r)} \leq C r^{ - (N-2)/2 } \Vab{\nabla u}_2 \leq C r^{-(N-2)/2} \Vab{u}_{\wh{X}},
	\]
	which completes the proof.

\smallskip 	
	
	\noindent
	\textbf{Claim 3:} 
	\textsl{$\vab{u'(r)} \to 0$ as $r \to \infty$.}

\smallskip

	Set 
	\[
	F(r) \coloneq \frac{1}{2} \vab{u'(r)}^2 +  \frac{q-1}{q} \vab{u'(r)}^q + \frac{\alpha}{p} \vab{u(r)}^p.
	\]
	From \eqref{eq_rad-ver2}, we compute
	\[
	\begin{aligned}
		F'(r) 
		&= u'(r) u''(r) + (q-1) \vab{u'(r)}^{q-2} u'(r) u''(r) + \alpha \vab{u(r)}^{p-2} u(r) u'(r)
		\\
		&= - \frac{N-1}{r} \ab( 1 + \vab{u'(r)}^{q-2} ) \ab( u'(r) )^2 \leq 0.
	\end{aligned}
	\]
	Since $F(r) \geq 0$ for any $r \in [0,\infty)$, the limit $\lim_{r \to \infty} F(r) \in [0, \infty)$ exists. 
	By $u(r) \to 0$ as $r \to \infty$, we infer that $u'(r) \to 0$ and $F(r) \to 0$ as $r \to \infty$ (since $u \in C^1$).

\smallskip

	\noindent
	\textbf{Claim 4:} 
	\textsl{There exists $C=C(u)$ such that $\vab{u(r)} \leq C r^{-(N-2)}$ for all $r \geq 1$. }

\smallskip

	We first consider the case $p > 2^* + 1$. Notice that 
	\[
	N-1 - (p-1) \frac{N-2}{2} < N -1 - N = - 1. 
	\]
	From \textbf{Claim 2} and $u \in \wh{X}_\rad$, it follows that 
	\[
	r^{N-1} \vab{u(r)}^{p-1} \leq C r^{ N-1 - (p-1) (N-2)/2 }  \in L^1((1,\infty)). 
	\]
	Thus, by \eqref{eq_rad}, as $r \to \infty$, 
	$(r^{N-1} (1+ \vab{u'(r)}^{q-2} ) u'(r)  )$ forms a Cauchy sequence and hence the limit 
	\[
	\lim_{r \to \infty} r^{N-1} \ab( 1 + \vab{u'(r)}^{q-2} ) u'(r) = C \in \R
	\]
	exists. Since $u'(r) \to 0$ as $r \to \infty$, there exists $C'>0$ such that
	\[
	\vab{u'(r)} \leq C r^{-(N-1)} \quad \text{for all $r \geq 1$}. 
	\]
	From $u(r) \to 0$ as $r \to \infty$, we obtain 
	\[
	\vab{u(r)} \leq \int_r^\infty \vab{u'(s)} \odif{s} \leq C r^{-(N-2)} \quad \text{for each $r \geq 1$}. 
	\]
	This is the desired conclusion.

	We next treat the case $2^* < p \leq 2^* + 1$. 
	By integrating \eqref{eq_rad} on $[1,r]$ and noting \textbf{Claim 3}, there exists $C>0$ such that for each $r>1$, 
	\begin{equation}\label{eq_basic}
		r^{N-1} \vab{u'(r)} \leq C \ab[ 1+ \int_1^r s^{N-1} \vab{u(s)}^{p-1} \odif{s}].
	\end{equation}
	Write $a_0 \coloneq (N-2)/2 $. \textbf{Claim 2} and \eqref{eq_basic} yield 
	\begin{equation}\label{eq_basic2}
		\vab{u'(r)} \leq C r^{-(N-1)} \ab( 1 + \int_1^r s^{ N-1 - a_0(p-1) } \odif{s} ). 
	\end{equation}
	When $N - a_0(p-1) = 0$, that is $p = 2^*+1$, then 
	\[
	\vab{u'(r)} \leq C' r^{-(N-1)} \ab( 1 + \log r ) \in L^1((1,\infty))
	\]
	and 
	\[
	\begin{aligned}
		\vab{u(r)} \leq \int_r^\infty \vab{u'(s)} \odif{s} 
		&\leq 
		C' \int_r^\infty s^{1-N} \odif{s} + C' \int_r^\infty s^{1-N} \log s \odif{s}
		\\
		&\leq C' r^{2-N} + C' \int_{\log r}^\infty e^{\tau(1-N)} \tau e^\tau \odif{\tau}
		\\
		&\leq C' r^{2-N} + C' \ab[ - \frac{e^{\tau(2-N)}}{N-2} \tau ]^\infty_{\log r}  +C'' \int_{\log r}^{\infty} e^{(2-N) \tau} \odif{\tau}
		\\
		& \leq C''' r^{2-N} \ab( 1 + \log r ). 
	\end{aligned}
	\]
	It follows from $p > 2^*$ that 
	\[
	- (p-1) (N-2) < - (2^*-1) (N-2) = - (N+2). 
	\]
	Substituting the estimate $\vab{u(r)} \leq C r^{2-N} (1+ \log r)$ into \eqref{eq_basic} gives 
	\[\begin{aligned}
		\vab{u'(r)} 
		&\leq Cr^{1-N}  \ab[ 1 + \int_1^r s^{N-1} s^{ - (N+2) }\cdot s^{- (p-1) (N-2)+(N+2)}( 1 + \log s )^{p-1} \odif{s} ] \\
		&\leq Cr^{1-N} \ab[ 1 + \int_1^r s^{N-1} s^{ - (N+2) } \odif{s} ] \leq C r^{1-N}. 
	\end{aligned}\]
			Therefore, 
			\[
			\vab{u(r)} \leq \int_r^\infty \vab{u'(s)} \odif{s} \leq C r^{2-N}
			\]
			and the desired decay estimate holds.

			When $2^* < p < 2^* + 1$, 
			\[
			N-1 - a_0(p-1) > N - 1 - \frac{N-2}{2} \cdot 2^* =  -1
			\]
			and 
			\[
			1- a_0(p-1) < 1 - \frac{N-2}{2} \cdot \frac{N+2}{N-2} = - \frac{N}{2} < -1. 
			\]
			Hence, \eqref{eq_basic2} implies 
			\[
			\vab{u'(r)} \leq C' r^{-(N-1)} \ab(  1 + r^{N - a_0(p-1) } ) = C' \ab( r^{-(N-1)} + r^{ 1- a_0(p-1) }  ) \in L^1((1,\infty)). 
			\]
			and 
			\begin{equation}\label{step0}
			\vab{u(r)} \leq \int_r^\infty \vab{u'(s)} \odif{s} \leq C \ab( r^{-(N-2)} + r^{2 - a_0(p-1)} ) . 
			\end{equation}
			Notice that 
			\[
			2 - a_0(p-1) < - a_0 \ \iff \ 2 < a_0 (p-2) \ \iff \ 2^* = \frac{4}{N-2} + 2 < p. 
			\]
			Set 
			\[
			a_1 \coloneq (p-1) a_0 - 2 > a_0. 
			\]
			If $a_1 \geq N-2$, then we get the desired conclusion due to \eqref{step0}. 
			Assume $a_1 < N-2$. Substituting $\vab{u(r)} \leq C r^{ - a_1 }$ into \eqref{eq_basic} implies 
			\[
			\vab{u'(r)} \leq C r^{1-N} \ab[ 1 + \int_1^r s^{N-1 - (p-1) a_1 } \odif{s} ]. 
			\]
			When $N \leq (p-1) a_1$, as arguing in the above, we have 
			\[
			\vab{u(r)} \leq C r^{2-N} \ab( 1 + \log r ),
			\]
			and substitution into \eqref{eq_basic} again gives $\vab{u(r)} \leq C r^{-(N-2)}$. 
			
			If $N > (p-1) a_1$, then 
			\[
			\vab{u'(r)} \leq C r^{1-N} \ab[ 1 + r^{ N - (p-1)a_1  } ] = C \ab[ r^{1-N} + r^{1 - (p-1)a_1} ].
			\]
			Since
			\[
			a_1 > a_0, \quad 1 - a_1(p-1) < 1 - a_0(p-1) < - \frac{N}{2},
			\]
			we have 
			\[
			\vab{u(r)} \leq \int_r^\infty \vab{u'(s)} \odif{s} \leq C \ab[ r^{2-N} + r^{ 2 - (p-1) a_1 } ]. 
			\]
			Set 
			\[
			a_2 \coloneq (p-1) a_1 - 2. 
			\]
			From $a_0 < a_1$ it follows that
			\[
			a_2 - a_1 = (p-1) (a_1 - a_0) > 0.
			\]

			The next step is to substitute $\vab{u(r)} \leq C r^{-a_2}$ (here assuming $a_2 < N-2$,  otherwise the claim is proved) into \eqref{eq_basic} to get 
			\[
			\vab{u'(r)} \leq C r^{1-N} \ab( 1 +  \int_1^r s^{N-1-(p-1)a_2} \odif{s} ). 
			\]
			When $N \leq (p-1)a_2$, then the desired estimate holds from the above argument. 
			If $N > (p-1)a_2$, then 
			\[
			\vab{u'(r)} \leq C \ab[ r^{1-N} + r^{ 1 - (p-1)a_2 } ]
			\]
			and 
			\[
			\vab{u(r)} \leq C \ab[ r^{2-N} + r^{2 - (p-1) a_2} ].
			\]
			Thus, set 
			\[
			a_3 \coloneq (p-1) a_2 - 2
			\]
			and notice that 
			$a_3 - a_2 = (p-1)(a_2-a_1) = (p-1)^2 (a_1-a_0) > 0$.

			In general, set 
			\[
			a_{n+1} \coloneq (p-1) a_n - 2.
			\]
			Then it can be shown that if $(p-1) a_n \geq N$, then the desired result holds. Otherwise, 
			$\vab{u(r)} \leq C r^{-a_{n+1}}$ and $a_{n+1} - a_n = (p-1) (a_n-a_{n-1}) = (p-1)^n (a_1-a_0)$. 
			Since $p>2$, after finitely many times of iterations, 
			we observe that $a_n > N-2$ and obtain $\vab{u(r)} \leq C r^{2-N}$. 
			Hence we complete the proof of \textbf{Claim 4}, giving the complete proof. 
		\end{proof}

\begin{appendices}
\appendix

\section{Regularity and the Pohozaev identity}\label{App}



\begin{lemma}\label{l:reg}
	Let $N \geq 1$, $2 < q < \infty$, $p < q^*$, $\lambda \in \R$, $\alpha > 0$ and 
	$u \in X$ be a weak solution of \eqref{eq_main}. 
	Then $u \in C^{2,\gamma}_{\rm loc} (\RN)$ for some $\gamma \in (0,1)$. 
\end{lemma}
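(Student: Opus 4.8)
The plan is to bootstrap the regularity in three stages, exploiting that \eqref{eq_main} can be written in divergence form as $-\diver\big( a(\nabla u)\big) + \lambda u = \alpha\vab{u}^{p-2}u$ with the vector field $a(\xi) \coloneq (1 + \vab{\xi}^{q-2})\xi$. The crucial structural feature is that the linear part $-\Delta u$ makes the operator nondegenerate: one computes $\partial_\xi a(\xi)\eta\cdot\eta \geq \vab{\eta}^2$ for all $\xi,\eta$, so the ellipticity constant from below never deteriorates, while the upper bound only grows like $\vab{\xi}^{q-2}$. Thus, once the gradient is known to be locally bounded, the operator is uniformly elliptic on compact sets, and the only real work is to produce that gradient bound.

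First I would establish $u \in L^\infty_{\rm loc}(\RN)$. Writing the right-hand side as $f(x) \coloneq \alpha\vab{u}^{p-2}u - \lambda u$ and using $u \in X$ together with the embeddings \eqref{embeddings}, the subcriticality $p < q^*$ guarantees that $f$ lies in a Lebesgue space good enough to run a De Giorgi--Moser iteration on the energy inequality coming from the $W^{1,q}$-structure of the equation. This yields $u \in L^\infty_{\rm loc}$, and hence $f \in L^\infty_{\rm loc}$ as well (here $p > 2$ makes $s \mapsto \vab{s}^{p-2}s$ continuous).

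The heart of the proof, and the main obstacle, is the next stage. With $u$ locally bounded and the right-hand side now in $L^\infty_{\rm loc}$, I would invoke interior gradient Hölder regularity for quasilinear equations whose principal part has the $(2,q)$-structure above (results of DiBenedetto--Tolksdorf--Lieberman type) to conclude $u \in C^{1,\gamma}_{\rm loc}(\RN)$ for some $\gamma \in (0,1)$. The nondegeneracy noted above is precisely what makes this step available, and in fact simpler than for the pure $q$-Laplacian, since the lower ellipticity bound does not vanish at critical points.

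Finally, once $\nabla u$ is locally bounded and Hölder, I would freeze the coefficients and rewrite \eqref{eq_main} in nondivergence form $-\sum_{i,j} a_{ij}(x)\,\partial_{ij}u = g(x)$, where
\[
a_{ij}(x) = (1 + \vab{\nabla u(x)}^{q-2})\delta_{ij} + (q-2)\vab{\nabla u(x)}^{q-4}\partial_i u(x)\,\partial_j u(x),
\qquad g = f.
\]
On any compact set the matrix $(a_{ij})$ has eigenvalues in $[1, C]$ by the computation above, so the frozen equation is uniformly elliptic. Its coefficients are Hölder continuous: the map $\xi \mapsto (1+\vab{\xi}^{q-2})\delta_{ij} + (q-2)\vab{\xi}^{q-4}\xi_i\xi_j$ is locally $C^{0,\min\{1,q-2\}}$ for $q>2$ (the apparently singular factor $\vab{\xi}^{q-4}\xi_i\xi_j$ is homogeneous of degree $q-2>0$ and hence continuous up to $\xi=0$), and composing with $\nabla u \in C^{0,\gamma}_{\rm loc}$ preserves Hölder continuity; likewise $g = \alpha\vab{u}^{p-2}u - \lambda u$ is locally Hölder because $u \in C^{1,\gamma}_{\rm loc}$ and $p>2$. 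Interior Schauder estimates then give $u \in C^{2,\gamma}_{\rm loc}(\RN)$, after relabeling $\gamma$ by the minimum of the exponents produced above.
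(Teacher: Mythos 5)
Your overall route---local boundedness via Moser/Serrin iteration, then $C^{1,\gamma}_{\rm loc}$ via Lieberman-type gradient estimates (exploiting that the $(2,q)$-structure keeps the lower ellipticity bound equal to $1$), then Schauder---is the same as the paper's, and the first two stages are sound. The problem is the last stage.

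You say that, once $u \in C^{1,\gamma}_{\rm loc}$, you ``freeze the coefficients and rewrite \eqref{eq_main} in nondivergence form'' and then apply interior Schauder estimates. But at that point $u$ is only a weak solution of the divergence-form equation: the identity $\diver A(\nabla u) = \sum_{i,j}\partial_{\xi_j}A_i(\nabla u)\,\partial_i\partial_j u$, i.e.\ the nondivergence form itself, presupposes that $u$ possesses second derivatives in some (at least weak or a.e.) sense, and interior Schauder estimates apply to classical or strong ($W^{2,r}_{\rm loc}$) solutions, not to $C^{1,\gamma}$ functions satisfying a divergence-form identity against test functions. This is precisely where the paper does its real work: it runs a difference-quotient argument (testing the equation with $-\Delta_{-he_i}\big(\varphi^2\Delta_{he_i}u\big)$ and using the pointwise bound $DA(\xi)\geq \mathrm{Id}$) to obtain $u \in H^2_{\rm loc}(\RN)$; only then is $u$ a strong solution of the nondivergence-form equation, to which the $W^{2,r}$ (Calder\'on--Zygmund) estimates with continuous coefficients give $u \in W^{2,r}_{\rm loc}$ for every $r<\infty$, hence $u\in C^{1,\beta}_{\rm loc}$, and finally the Schauder estimate yields $C^{2,\gamma}_{\rm loc}$. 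As written, your plan skips the step that makes the rewriting legitimate, and that step is the technical heart of the proof. The repair is standard but not free: either insert the $H^2_{\rm loc}$ difference-quotient argument as the paper does, or run difference quotients directly in H\"older spaces---the quotient $w_h=\Delta_{he_k}u$ solves a linear divergence-form equation with coefficients $B_h=\int_0^1 DA\big(\nabla u + t(\nabla u(\cdot+he_k)-\nabla u)\big)\,\mathrm{d}t$ that are uniformly elliptic and uniformly H\"older on compact sets, so Schauder theory for divergence-form equations bounds $w_h$ in $C^{1,\gamma}$ uniformly in $h$ and gives $\partial_k u\in C^{1,\gamma}_{\rm loc}$. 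Either way, some version of this second-order differentiability argument must appear; it cannot be absorbed into the word ``rewrite.''
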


\begin{proof}
We first claim $u \in L^\infty(\RN)$. Indeed, when $q > N$, the claim follows from the embedding $X \subset L^\infty(\RN)$. 
On the other hand, when $2 < q \leq N$, we may exploit \cite[Theorems 1 and 2]{Se64} to show $u \in L^\infty(B_1(z))$ for each $z \in \RN$. 
Here we point out that this estimate can be uniform with respect to $z \in \RN$, and hence $u \in L^\infty(\RN)$ holds.

Since $u \in L^\infty(\RN)$, \cite[Theorem 1.7]{Li91} yields $u \in C^1(\RN)$. 
We introduce $A(\xi) \coloneq \xi + \vab{\xi}^{q-2} \xi \in C^1(\RN, \RN)$. 
Then $u \in X \cap L^\infty(\RN) \cap C^1(\RN)$ is a weak solution of 
\begin{equation}\label{eq:u}
	-\diver  A(\nabla v)   = f_u(x) \quad \text{in} \ \RN,
\end{equation}
where $f_u \coloneq - \lambda u + \alpha \vab{u}^{p-2} u \in C^1(\R^N)$. 
For $ 0 < \vab{h} \ll 1$, we test $-\Delta_{-he_i} \ab( \varphi^2 \Delta_{he_i} u ) $ to \eqref{eq:u} where 
$\varphi \in C^\infty_c(B_R(0))$ and $\Delta_{he_i} v(x) \coloneq ( v(x+he_i) - v(x) )/h$. Then 
\[
\int_{\RN} \Delta_{he_i}\ab( A ( \nabla u ) ) \nabla \ab(  \varphi^2 \Delta_{he_i} u ) \odif{x} 
= \int_{\RN} \ab( \Delta_{he_i} f_u ) \varphi^2 \Delta_{he_i} u \odif{x}. 
\]
From $A \in C^1(\RN,\RN)$ it follows that 
\[
\begin{aligned}
	\Delta_{he_i} \ab( A (\nabla u) ) (x)
	&= 
	h^{-1} \Bab{ A \ab( \nabla u(x) + \ab( \nabla u(x+he_i) - \nabla u(x) ) ) - A(\nabla u(x)) }
	\\
	&= 
	\int_0^1 DA \ab( \nabla u(x) + \theta \ab( \nabla u(x+he_i) - \nabla u(x) ) ) \odif{\theta} 
	\cdot \Delta_{he_i} \nabla u(x).
\end{aligned}
\]
Thus, by writing 
\[
B_{he_i}(x) \coloneq \int_0^1 DA \ab( \nabla u(x) + \theta \ab( \nabla u(x+he_i) - \nabla u(x) ) ) \odif{\theta},
\]
we obtain 
\[
\begin{aligned}
	&\int_{\RN} \Delta_{he_i} \ab( A(\nabla u) ) \nabla \ab( \varphi^2 \Delta_{he_i} u ) \odif{x}
	\\ 
	= \ &
	\int_{\RN} \ab[  B_{he_i} \Delta_{he_i} \nabla u \cdot \Delta_{he_i} \nabla u] \varphi^2  \odif{x} 
	+ 2 \int_{\RN} \ab[ B_{he_i} \Delta_{he_i} \nabla u \cdot \nabla \varphi ] \varphi \Delta_{he_i} u \odif{x}.
\end{aligned}
\]
A direct computation yields 
\begin{equation}\label{deri-A}
	DA(\xi)_{ij} = \ab( 1 + \vab{\xi}^{q-2} ) \delta_{ij} + (q-2) \vab{\xi}^{q-2} \frac{\xi_i}{\vab{\xi}} \frac{\xi_j}{\vab{\xi}}, \quad 
	DA(\xi) \geq \mathrm{Id}.
\end{equation}
Thus, 
\[
\int_{\RN} \vab{ \Delta_{he_i} \nabla u }^2 \varphi^2 \odif{x} + 2 \int_{\RN} \ab[ B_{he_i} \Delta_{he_i} \nabla u \cdot \nabla \varphi ] \varphi \Delta_{he_i} u \odif{x}
\leq 
\int_{\RN} \ab( \Delta_{he_i} f_u ) \varphi^2 \Delta_{he_i} u  \odif{x} .
\]

On the other hand, by $u \in C^1(\RN)$ and $\varphi \in C^\infty_c(B_R(0))$, one can find $C=C( \Vab{\nabla u}_{L^\infty(B_R)} )$ such that
\[
\begin{aligned}
	\vab{2 \int_{\RN} \ab[ B_{he_i} \Delta_{he_i} \nabla u \cdot \nabla \varphi ] \varphi \Delta_{he_i} u \odif{x}}
	&\leq
	C \int_{\RN}  \vab{\Delta_{he_i} \nabla u} \vab{\nabla \varphi} \vab{\varphi} \vab{\Delta_{he_i} u}  \odif{x}
	\\
	&\leq
	\frac{1}{2} \int_{\RN} \vab{ \Delta_{he_i} \nabla u }^2 \varphi^2 \odif{x} 
	+ C^2 \int_{\RN} \vab{\nabla \varphi}^2 \vab{\Delta_{he_i} u}^2 \odif{x}.  
\end{aligned}
\]
Therefore, we get 
\begin{equation}\label{eq:u2}
	\int_{\RN} \vab{ \Delta_{he_i} \nabla u }^2 \varphi^2  \odif{x}  
	\leq 
	2\int_{\RN} \ab( \Delta_{he_i} f_u ) \varphi^2 \Delta_{he_i} u \odif{x} + 2 C^2 \int_{\RN} \vab{\nabla \varphi}^2 \vab{ \Delta_{he_i} u }^2 \odif{x}.
\end{equation}
By $u \in C^1(\RN)$, $\Delta_{he_i} u \to \partial_i u$ in $C_{\rm loc} (\RN)$ as $h \to 0$, and hence 
the right-hand side of \eqref{eq:u2} is bounded. Since $\varphi \in C^\infty_c(B_R(0))$ is arbitrary, 
this yields $u \in H^2_{\rm loc} (B_R(0))$. 
Here $R>0$ is also arbitrary and we deduce $u \in H^2_{\rm loc} (\RN)$.

Finally, by $u \in H^2_{\rm loc} (\RN)$ with $u \in C^1(\RN)$, 
$A(\nabla u) \in H^1_{\rm loc} (\RN)$ and 
\[
\diver A(\nabla u) = \sum_{i,j=1}^N \frac{\partial A_i}{\partial \xi_j} (\nabla u) \partial_i \partial_j u.
\]
Recalling \eqref{deri-A}, we set 
\[
\wt{a}_{ij} (x) \coloneq \frac{\partial A_i}{\partial \xi_j} (\nabla u(x)) 
= \ab( 1 + \vab{\nabla u(x)}^{q-2} ) \delta_{ij} + (q-2) \vab{\nabla u(x)}^{q-2} \frac{\partial_i u(x)}{\vab{\nabla u(x)}} \frac{\partial_ju(x)}{\vab{\nabla u(x)}} \in C(\RN). 
\]
Since $u \in H^{2}_{\rm loc} (\RN)$ is a strong solution to 
\[
 - \sum_{i,j=1}^N \wt{a}_{ij} (x) \partial_i \partial_j u = f_u  \quad \text{in} \ \RN, \quad f_u \in C^1(\RN),
\]
the $W^{2,p}$-estimate for the uniform elliptic equations (\cite[Chapter 9]{GiTr01}, \cite[Chapter 3]{ChWu91}) 
yields $u \in W^{2,r}_{\rm loc} (\RN)$ for any $r \in (1,\infty)$, and hence $u \in C^{1,\beta}_{\rm loc} (\RN)$ for each $\beta \in (0,1)$. 
By $(DA)_{ij} \in C^\gamma_{\rm loc}(\RN,\RN)$ where $0<\gamma < \min \set{ q-2,1 }$, 
we have $\wt{a}_{ij} \in C^{\gamma}_{\rm loc}(\RN)$. 
Thus, the Schauder estimate gives $u \in C^{2,\gamma}_{\rm loc} (\RN)$ and this completes proof. 
\end{proof}

\begin{remark}
	The above argument works also for weak solutions in $\wh{X}$ and 
	the same conclusion to \cref{l:reg} holds for weak solutions in $\wh{X}$. 
\end{remark}

Since we know that every weak solution in $X$ becomes a classical solution, 
it is standard to obtain the Pohozaev identity by following the arguments in \cite[Proposition 1]{BeLi83}:

\begin{lemma}\label{lem_Poho}
	Let $N \geq 1$, $2 < q < \infty$, $p \in (2,q^*)$, $\lambda \in \R$, $\alpha > 0$ and 
	$u \in X$ be a weak solution of \eqref{eq_main}. Then $u$ satisfies the Pohozaev identity: 
	\begin{equation*}\label{eq_Poho}
		0 = \frac{N-2}{2} \Vab{\nabla u}_2^2 + \frac{N-q}{q} \Vab{\nabla u}_q^q + N \frac{\lambda}{2} \Vab{u}_2^2 - N \frac{\alpha}{p} \Vab{u}_p^p 
		\eqcolon P_{\alpha,\lambda} (u). 
	\end{equation*}
	The same is true for a weak solution $u \in \wh{X}$ when $N \geq 3$, $\lambda = 0$ and $p \in [2^*, q^*)$. 
\end{lemma}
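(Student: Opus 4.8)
The plan is to exploit the regularity already in hand: by \cref{l:reg}, any weak solution $u\in X$ of \eqref{eq_main} is of class $C^{2,\gamma}_{\rm loc}(\RN)$, hence a classical solution, while the membership $u\in X$ supplies the global integrability $\nabla u\in L^2(\RN)\cap L^q(\RN)$ and $u\in L^2(\RN)\cap L^p(\RN)$. The identity then follows, as in \cite[Proposition 1]{BeLi83}, by testing the equation against the dilation field $x\cdot\nabla u$. Since this field is not globally integrable against the equation, I would not test directly but against the truncated field $\phi_R\coloneq\chi_R\,(x\cdot\nabla u)$, where $\chi_R(x)\coloneq\chi(x/R)$ for a fixed cut-off $\chi\in C^\infty_c(\RN)$ with $\chi\equiv1$ on $B_1$ and $\supp\chi\subset B_2$. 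Because $u\in C^2_{\rm loc}$ and $\chi_R$ has compact support, $\phi_R$ is an admissible test function.

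Next I would compute the four resulting integrals. For the Laplacian and $q$-Laplacian terms I would use the divergence identities
\[
\nabla u\cdot\nabla(x\cdot\nabla u)=|\nabla u|^2+\diver\!\Big(\tfrac{x}{2}|\nabla u|^2\Big)-\tfrac{N}{2}|\nabla u|^2,
\]
\[
|\nabla u|^{q-2}\nabla u\cdot\nabla(x\cdot\nabla u)=|\nabla u|^q+\diver\!\Big(\tfrac{x}{q}|\nabla u|^q\Big)-\tfrac{N}{q}|\nabla u|^q,
\]
so that, after multiplying by $\chi_R$ and integrating by parts, the second-order derivatives disappear and produce only the bulk contributions $\tfrac{2-N}{2}\chi_R|\nabla u|^2$ and $\tfrac{q-N}{q}\chi_R|\nabla u|^q$, together with error terms carrying a factor $\nabla\chi_R$. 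The lower-order terms are treated likewise via $\lambda u(x\cdot\nabla u)=\diver(\tfrac{\lambda x}{2}u^2)-\tfrac{N\lambda}{2}u^2$ and $\alpha|u|^{p-2}u(x\cdot\nabla u)=\diver(\tfrac{\alpha x}{p}|u|^p)-\tfrac{N\alpha}{p}|u|^p$.

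The crucial point --- and the only genuine obstacle, since $u\in X$ affords no pointwise decay --- is to show that every error term involving $\nabla\chi_R$ vanishes as $R\to\infty$. Each such term is supported in the annulus $B_{2R}\setminus B_R$, where $|\nabla\chi_R|\le C/R$ and $|x|\le 2R$, so it is bounded by $C$ times an integral over $B_{2R}\setminus B_R$ of one of the densities $|\nabla u|^2$, $|\nabla u|^q$, $u^2$, $|u|^p$; since each of these lies in $L^1(\RN)$, the annular integral tends to $0$. Meanwhile $\chi_R\to1$ boundedly and pointwise, so dominated convergence sends the bulk terms to the corresponding full-space integrals. Passing to the limit in the tested identity yields
\[
\tfrac{2-N}{2}\Vab{\nabla u}_2^2+\tfrac{q-N}{q}\Vab{\nabla u}_q^q-\tfrac{N\lambda}{2}\Vab{u}_2^2+\tfrac{N\alpha}{p}\Vab{u}_p^p=0,
\]
which is $P_{\alpha,\lambda}(u)=0$ after multiplication by $-1$.

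Finally, for a weak solution $u\in\wh X$ with $N\ge3$, $\lambda=0$ and $p\in[2^*,q^*)$, I would run the same argument: the remark following \cref{l:reg} gives $u\in C^{2,\gamma}_{\rm loc}$, and the embedding $\wh X\subset L^r(\RN)$ for $r\in[2^*,q^*]$ ensures $u\in L^{2^*}(\RN)\cap L^p(\RN)$ together with $\nabla u\in L^2(\RN)\cap L^q(\RN)$, so the densities $|\nabla u|^2,|\nabla u|^q,|u|^p$ are again in $L^1(\RN)$ and the annulus estimate applies verbatim, while the $\lambda$-term is simply absent.
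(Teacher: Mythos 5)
Your proof is correct and follows essentially the same route as the paper, which gives no details at all: it simply invokes \cref{l:reg} to upgrade weak solutions to classical ones and then declares the identity ``standard'' by the argument of \cite[Proposition 1]{BeLi83}. Your write-up fills in exactly those details (regularity, testing with the truncated dilation field, annular error estimates using only the $L^1$ integrability of $\vab{\nabla u}^2$, $\vab{\nabla u}^q$, $u^2$, $\vab{u}^p$), with the cutoff $\chi_R$ playing the role of the sphere-sequence truncation in Berestycki--Lions; the two variants are interchangeable.
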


\end{appendices}

\section*{Acknowledgments}
L.B. is a member of the {\em Gruppo Nazionale per l'Analisi Ma\-te\-ma\-ti\-ca, la Probabilit\`a e le loro Applicazioni} (GNAMPA) of the {\em Istituto Nazionale di Alta Matematica} (INdAM). 
Most of the paper was written while L.B. was affiliated with the University of Granada. L. B. was partially supported by the ``Maria de Maeztu'' Excellence Unit IMAG, reference CEX2020-001105-M, funded by MCIN/AEI/10.13039/501100011033/, by the Deutsche Forschungsgemeinschaft (DFG, German Research Foundation) - Project-ID 258734477 - SFB 1173 and by the INdAM-GNAMPA Project 2024 titled {\em Regolarità ed esistenza per operatori anisotropi} (E5324001950001). 
This work was supported by JSPS KAKENHI Grant Number JP24K06802.

\end{document}